\documentclass[12pt]{amsart}
\usepackage{amsmath}
\usepackage{amssymb}
\usepackage{amsfonts} 
\usepackage{mathrsfs}
\usepackage{mathtools}

  \newcommand{\Cdb}{\mbox{$\mathbb{C}$}}

   \newcommand{\Rdb}{\mbox{$\mathbb{R}$}}

   \renewcommand{\H}{\mbox{${\mathcal H}$}}

    \newcommand{\M}{\mbox{${\mathcal M}$}}
   \newcommand{\N}{\mbox{${\mathcal N}$}}

\newcommand{\norm}[1]{\Vert#1\Vert}
\newcommand{\bignorm}[1]{\bigl\Vert#1\bigr\Vert}
\newcommand{\Bignorm}[1]{\Bigl\Vert#1\Bigr\Vert}

\newtheorem{theorem}{Theorem}[section]
\newtheorem{lemma}[theorem]{Lemma}
\newtheorem{corollary}[theorem]{Corollary}
\newtheorem{proposition}[theorem]{Proposition}
\newtheorem{definition}[theorem]{Definition}
\theoremstyle{remark}
\newtheorem{remark}[theorem]{\bf Remark}
\theoremstyle{definition}

\numberwithin{equation}{section}

\author[C. Kriegler]{Christoph Kriegler}
\email{christoph.kriegler@uca.fr}
\address{Universit\'e Clermont Auvergne, 
CNRS, LMBP, F-63000 Clermont-Ferrand, France}
\author[C. Le Merdy]{Christian Le Merdy}
\email{clemerdy@univ-fcomte.fr}
\address{Laboratoire de Math\'ematiques de Besan\c con, UMR 6623,
CNRS, Universit\'e Marie et Louis Pasteur,
25030 Besan\c{c}on Cedex, France}
\author[S. Zadeh]{Safoura Zadeh}
\email{jsafoora@gmail.com}
\address{School of Mathematics, University of Bristol, United Kingdom}

\begin{document}

\title{Positive isometric Fourier multipliers on non-commutative $L^p$-spaces}

\begin{abstract}
For a locally compact group \(G\), let \(\mathcal{L}G\) denote its left group von Neumann algebra and let \(L^p(\mathcal{L}G)\), \(1 \le p \le \infty\), be the corresponding non-commutative \(L^p\)-space. Given \(\phi \in L^\infty(G)\), we study the Fourier multiplier \(M_{\phi,p}\) acting on \(L^p(\mathcal{L}G)\). We prove that for any \(p \neq 2\), the operator \(M_{\phi,p}\) is a positive surjective isometry if and only if \(\phi\) coincides locally almost everywhere with a continuous character of \(G\). This characterization extends results obtained recently (jointly with C.~Arhancet) in the unimodular setting.
\end{abstract}

\maketitle

\noindent
{\it 2020 Mathematics Subject Classification:} Primary 46L51, secondary	43A15, 46B04, 43A22.

\smallskip
\noindent
{\it Key words:} Fourier multipliers, 
non-commutative $L^p$-spaces, isometries.

\bigskip
\section{Introduction}\label{1}

\noindent Let $\Gamma$ be an abelian locally compact group. Parrott~\cite{Par} and Strichartz~\cite{St} showed that, for $p \neq 2$, a Fourier multiplier $T$ on $L^p(\Gamma)$ is an isometry if and only if there exist $c \in \mathbb{T}$, the unit circle, and $\gamma \in \Gamma$ such that \(T = c\,\tau_\gamma\), where $\tau_\gamma(f) = f(\cdot - \gamma)$ denotes translation by $\gamma$. Equivalently, the symbol of $T$ is a continuous character of the dual group $G = \widehat{\Gamma}$ multiplied by an element of $\mathbb{T}$. This characterization reveals a strong rigidity phenomenon for isometric Fourier multipliers in the commutative setting.

For non-abelian groups, harmonic analysis and Fourier multiplier theory naturally lead to the framework of left group von Neumann algebras and their associated non-commutative $L^p$-spaces. Let $G$ be a locally compact group equipped with a left Haar measure, and let $\mathcal L G\subset B(L^2(G))$ denote its left group von Neumann algebra. This algebra carries a canonical normal semifinite faithful weight, the left Plancherel weight. When $G$ is abelian, $\mathcal L G$ identifies with the $L^\infty$-space over the dual group, and the Plancherel weight coincides with integration against the Haar measure on the dual group. The Plancherel weight is a trace if and only if $G$ is unimodular, in which case the $L^p$ theory simplifies considerably.

The study of Fourier multipliers on non-commutative $L^p$-spaces has attracted considerable attention in recent years, largely within the unimodular setting; see, for instance, \cite{CDLS, GJP, PRD}, as well as \cite{P} and the references therein. In joint work with C.~Arhancet~\cite{AKLZ}, we contributed to this line of research by extending the Parrott--Strichartz characterization of isometric Fourier multipliers to $L^p(\mathcal L G)$ in the unimodular/tracial case. The aim of the present paper is to push this analysis beyond the unimodular framework. In the non-unimodular setting, the modular function introduces an intrinsic asymmetry between left and right structures, and the resulting absence of a trace becomes a principal source of new difficulties. Consequently, several arguments from~\cite{AKLZ} no longer apply and must be replaced by new techniques tailored to the setting of non-tracial weights.

Non-unimodular locally compact groups arise naturally in many fundamental examples, including affine groups of the form $\mathbb{R} \rtimes \mathbb{R}^+$ and, more generally, groups of $ax+b$ type. Only recently has a systematic study of $L^p$-Fourier multipliers in this setting begun to emerge; see \cite{Cas, CPPR, CDLS, DL, PDT, Terp2, VosPaper, Vos}.

There are several equivalent constructions of non-commutative $L^p$-spaces in the non-tracial setting, most notably those due to Haagerup, Connes--Hilsum, and Kosaki (see \cite{H, Hil, Hiai, Ko, Terp0, Terp1}). These frameworks provide natural extensions of classical $L^p$-spaces to the non-commutative context. In this paper, we investigate Fourier multipliers on $L^p(\mathcal L G)$ using the Connes--Hilsum construction.

In Section~\ref{3}, we recall the definition of bounded Fourier multipliers on $L^p(\mathcal L G)$ for $1 \le p < \infty$, following the approach of \cite{Vos}, and show that the various natural formulations of this notion are equivalent. Particular attention is devoted to the range $p < 2$, which is typically addressed only briefly in the existing literature on Fourier multipliers for non-unimodular groups. Our aim is to fill this gap.

Let $1\le p<\infty$ and $\phi\in L^\infty(G)$. We say that $\phi$ is a bounded Fourier multiplier on $L^p(\mathcal L G)$ if there exists $C\ge0$ such that for all $f\in\mathcal H\ast\mathcal H$,
\[
\Delta^{\frac1{2p}}\lambda(\phi f)\Delta^{\frac1{2p}}\in L^p(\mathcal L G)
\quad\text{and}\quad
\bigl\|\Delta^{\frac1{2p}}\lambda(\phi f)\Delta^{\frac1{2p}}\bigr\|_p
\le
C\,\bigl\|\Delta^{\frac1{2p}}\lambda(f)\Delta^{\frac1{2p}}\bigr\|_p.
\]
Here, $\mathcal H=\{f\in L^2(G):\operatorname{Supp}(f)\text{ is compact}\}$ and $\Delta$ denotes the operator of multiplication by the modular function of $G$. In this case, the map
\[
\Delta^{\frac1{2p}}\lambda(f)\Delta^{\frac1{2p}}
\longmapsto
\Delta^{\frac1{2p}}\lambda(\phi f)\Delta^{\frac1{2p}},\qquad f\in\mathcal H\ast\mathcal H,
\]
extends uniquely to a bounded operator \(M_{\phi,p}\colon L^p(\mathcal L G)\to L^p(\mathcal L G)\).

In Section~\ref{4}, we show that $\phi$ is a bounded Fourier multiplier on $L^p(\mathcal L G)$ if and only if it is a bounded Fourier multiplier on $L^{p'}(\mathcal L G)$, where $p^{-1} + {p'}^{-1} = 1$. We also prove that continuous characters of $G$ give rise to positive onto isometric Fourier multipliers on $L^p(\mathcal L G)$ for all $p$. Our main result, Theorem~\ref{5Positive}, establishes a converse for $p \neq 2$: if $M_{\phi,p}$ is a positive Fourier multiplier that is an onto isometry, then $\phi$ coincides locally almost everywhere with a continuous character of $G$.

Section~\ref{6} is devoted to the endpoint cases $p=1$ and $p=2$. We show that the rigidity phenomenon established for $1<p\neq 2<\infty$ continues to hold at $p=1$ without any positivity assumption, and at $p=2$ under a slightly stronger condition.

\section{Group von Neumann algebras and their non-commutative $L^p$-spaces}
\subsection{The weighted von Neumann algebras $({\mathcal L}G,\varphi_0)$ and $({\mathcal R}G,\psi_0)$}

We briefly recall the definition of the left and right group von Neumann algebras, starting with some required background on locally compact groups; for details, see \cite{Fo}, \cite[Section 2]{Haag}, \cite[Chapter 18]{Stra}  and \cite[Section VII.3]{Tak2}. 

Let G be a locally compact group equipped with a fixed left Haar measure dt. For $1 \leq p < \infty$, we write $\norm{\cdot}_p$ for the norm on the associated $L^p$-space $L^p(G)$.

Given any two locally measurable functions $\phi_1,\phi_2\colon G \to \Cdb$, we say that $\phi_1=\phi_2$ locally almost everywhere if the set $\{\phi_1\not=\phi_2\}$ is locally null. Let $L^\infty(G)$ denote the space of locally measurable functions $\phi \colon G \to \Cdb$ that are bounded except on a locally null set, modulo functions that are zero locally almost everywhere. Equipped with
\[
\norm{\phi}_{\infty} = \inf \{ c \,:\, |\phi(t)| \le c \text{ locally a.e.} \},
\]
this is a Banach space. For any $\phi\in L^\infty(G)$ and $f\in L^1(G)$, the product $\phi f\in L^1(G)$ is well-defined. Moreover, the duality pairing $\langle\phi,f\rangle=\int_G\phi f\,$ yields an isometric identification $L^\infty(G)=L^1(G)^*$. When $G$ is $\sigma$-compact, $L^\infty(G)$ coincides with the usual Lebesgue $L^\infty$-space associated with the Haar measure, although the two spaces may differ in general. See \cite[Section 2.3]{Fo} for details, and also \cite[Lemma 3.2]{AKLZ} and the subsequent discussion. Within $L^\infty(G)$ we distinguish two important subspaces. The subspace $C_b(G)$ consists of all bounded continuous functions on $G$, and the subspace $C_c(G)$ consists of compactly supported continuous functions.

Finally, we recall the modular function $\Delta \colon G \to (0,\infty)$. 
This is a continuous group homomorphism defined by the way the left Haar measure transforms under right translation and inversion. More precisely, we have
\[
\int_G f(t)\, dt = \int_G \Delta(t)^{-1} f(t^{-1})\, dt, \qquad f \in L^1(G),
\]
and
\[
\int_G f(ts)\, dt = \Delta(s)^{-1} \int_G f(t)\, dt, \qquad f \in L^1(G).
\]
These two properties will be used freely in the sequel.

Next, we recall the definition of the regular representations of a locally compact group. The left and right representations 
$\lambda, \rho \colon G \to B(L^2(G))$ are defined by
\[
[\lambda(t)h](s) = h(t^{-1}s), \qquad
\hbox{and}\qquad
[\rho(t)h](s) = \Delta(t)^{\frac12} h(st),
\]
for any $t,s \in G$ and $h \in L^2(G)$. 

Let ${\mathcal L}G \subset B(L^2(G))$ and ${\mathcal R}G \subset B(L^2(G))$ denote the von Neumann algebras generated by $\lambda(G)$ and $\rho(G)$, respectively.  
These are called the left and right group von Neumann algebras of \(G\) and they are mutual commutants, that is,
\begin{equation}\label{2Comm}
\mathcal{R}G = \mathcal{L}G'.
\end{equation}

The algebra \(\mathcal{L}G\) admits a concrete description via convolution operators. Indeed, given $f \in L^1(G)$, we define
\[
\lambda(f) = \int_G f(t) \lambda(t)\, dt,
\]
where the integral is understood in the strong sense.  
The resulting operator acts on \(L^2(G)\) by left convolution, that is, $[\lambda(f)](h)(s) = (f \star h)(s) = \int_G f(t)h(t^{-1}s) \, dt$ for all $h \in L^2(G)$.  
Moreover, $\lambda(f)$ belongs to ${\mathcal L}G$, and satisfies   
$\norm{\lambda(f)} \leq \norm{f}_1$.  
The image $\lambda(L^1(G))$ is $w^*$-dense in ${\mathcal L}G$ 
and since $C_c(G) \subset L^1(G)$ is a 
dense subspace, the same is true for $\lambda(C_c(G))$.

The map \(f\mapsto\lambda(f)\) respects both the algebraic and involutive structures. Indeed, $\lambda(f_1\star f_2)=\lambda(f_1)\lambda(f_2)$ for all $f_1,f_2\in L^1(G)$. If, for
any $f\in L^1(G)$, \(f^*\) is defined by
$$
f^*(t) = \Delta(t)^{-1}\overline{f(t^{-1})},
$$
then $f^*\in L^1(G)$ and $\lambda(f)^*=\lambda(f^*)$. Thus, $\lambda(L^1(G))$ is a $*$-subalgebra of ${\mathcal L}G$.

Associated with \(\mathcal{L}G\) is the left Plancherel weight 
$
\varphi_0\colon {\mathcal L}G^+\longrightarrow [0,\infty].
$
This weight is normal, semifinite and faithful, and 
is characterized by the identity
\begin{equation}\label{2FP}
\varphi_0\bigl(\lambda(f)^*\lambda(f)
\bigr) = \norm{f}_{2}^2,\qquad f\in L^1(G)\cap L^2(G).
\end{equation}
The modular nature on the group is reflected in tracial properties of \(\varphi_0\): $\varphi_0$ is a trace if and only if $G$ is unimodular (that is, $\Delta\equiv 1$).

A completely analogous construction applies to the right von Neumann algebra \(\mathcal{R}G\). The right Plancherel weight \(\psi_0\) is normal, semifinite, faithful on \(\mathcal{R}G\) satisfying

$$
\psi_0\bigl(\rho(f)^*\rho(f)
\bigr) = \norm{f}_{2}^2,\qquad f\in L^1(G)\cap L^2(G).
$$
The two Plancherel weights are related by the standard form of the group von Neumann algebra (see e.g. \cite[Chapter 3]{Hiai}). Indeed,
let ${\mathfrak J}\colon L^2(G)\to L^2(G)$ be the 
anti-unitary operator on \(L^2(G)\) defined by 
$[{\mathfrak J}(h)](s)= \Delta(s)^{-\frac12}\overline{h(s^{-1})}$.  This operator intertwines the left and right regular representations in the sense that ${\mathfrak J}\lambda(t) {\mathfrak J}=\rho(t)$ for all $t\in G$. As a consequence, the weights \(\varphi_0\) and \(\psi_0\) are related by conjugation with \({\mathfrak J}\) as follows
\begin{equation}\label{2Conj}
\psi_0(a) = \varphi_0({\mathfrak J}a{\mathfrak J}),\qquad a\in {\mathcal R}G^+.
\end{equation}

\smallskip
\subsection{The spaces $L^p({\mathcal L}G)$}

We briefly recall the Connes–Hilsum construction of non-commu-\-tative $L^p$-spaces in a form adapted to group von Neumann algebras. 
For the general construction and its basic properties, as well as its relation to the Haagerup non-commutative $L^p$-spaces 
defined in \cite{H}, we refer to \cite[Section 11.3]{Hiai} or to the original paper \cite{Hil}. Additional background and related results can be found in \cite{Terp0} and \cite[Section 1]{HJX}.

For $1 \leq p \leq \infty$, we denote by $L^p({\mathcal L}G)$ the Connes–Hilsum non-commutative $L^p$-space associated with the triple
\( ({\mathcal L}G, L^2(G), \psi_0).
\)
These spaces are studied in detail in \cite{Terp2}. In what follows, we only recall the properties that will be used later and refer to \cite{Terp2} for proofs and further details (see also \cite[Chapter 11]{Hiai} and \cite{Vos}).

Throughout the sequel, we will identify $\Delta$ with the multiplication operator 
\[
h \mapsto \Delta h
\] 
on $L^2(G)$. This is a closed and densely defined operator with domain 
\[
\{ h \in L^2(G) \,:\, \Delta h \in L^2(G) \}.
\] 
The same convention applies to any measurable function $G \to \Cdb$ in place of $\Delta$, and in particular to the functions $\Delta^z$ for any $z \in \Cdb$.

Given a normal semifinite weight $\varphi$ on $\mathcal{L}G$, we denote by 
\(
\frac{d\varphi}{d\psi_0}
\)
its spatial derivative with respect to $\psi_0$ (see, e.g., \cite[Definition 11.14]{Hiai}). This is a positive operator on $L^2(G)$. As shown in \cite[p. 551]{Terp2}, the spatial derivative of the left Plancherel weight with respect to the right Plancherel weight is given by
\begin{equation}\label{2Standard}
\frac{d\varphi_0}{d\psi_0} = \Delta.
\end{equation}
This identity may be also deduced from either \cite[Lemma 10]{Con} or \cite[Proposition 11.6]{Hiai}, using (\ref{2Conj}).

Let $A_0$ be the space of closed and densely defined operators on $L^2(G)$ that are affiliated with ${\mathcal L}G$. 
For a real number $\alpha \neq 0$, let $A_\alpha$ be the set of all closed and densely defined operators $a$ on $L^2(G)$ such that, if $a = u |a|$ is the polar decomposition of $a$, then $u \in {\mathcal L}G$ and there exists a normal semifinite weight $\varphi$ on ${\mathcal L}G$ satisfying
\begin{equation}\label{2A}
|a| = \Bigl( \frac{d\varphi}{d\psi_0} \Bigr)^\alpha.
\end{equation}
In this case, the weight $\varphi$ is uniquely determined by \(a\).

Each $A_\alpha$ is a complex vector space under the strong sum of operators and the usual scalar multiplication. 
It coincides with the so-called $(-\alpha)$-homogeneous operators (see \cite[Section 2]{Terp2} or \cite[pp.~202--203]{Hiai}) and is closed under taking adjoints. Moreover, the family \((A_\alpha)_{\alpha\in\mathbb{R}}\) is compatible with multiplication: if $a \in A_\alpha$, $b \in A_\beta$, the operator $ab$, with domain
\[
\{ h \in \mathrm{dom}(b) \,:\, b(h) \in \mathrm{dom}(a) \},
\] 
is closable, and its closure belongs to $A_{\alpha+\beta}$ (see \cite[Remark 2.2]{Terp2} or \cite[Lemma 2.4.2]{Vos}) and 
moreover, this operation is associative. The closure of $ab$ is called the strong product of $a$ and $b$, and by convention it will simply be denoted by $ab$ in the sequel. 

By (\ref{2Standard}), $\Delta$ belongs to $A_1$.  
Hence, for any $z \in \Cdb$, we have $\Delta^z \in A_{\mathrm{Re}(z)}$.  
Therefore, for any $f \in L^1(G)$ and $z_1, z_2 \in \Cdb$, the operator
\begin{equation}\label{2Strong}
\Delta^{z_1} \lambda(f) \Delta^{z_2} \in A_{\mathrm{Re}(z_1) + \mathrm{Re}(z_2)}
\end{equation}
is well-defined.

Let ${\mathcal L}G_*$ denote the predual of ${\mathcal L}G$, and let $1 \leq p < \infty$.  
By definition, the non-commutative $L^p$-space $L^p({\mathcal L}G)$ is the set of all $a \in A_{\frac{1}{p}}$ such that (\ref{2A}) holds for some $\varphi \in {\mathcal L}G_*^+$.  
In this case, we set
\[
\norm{a}_p = [\varphi(1)]^{\frac{1}{p}}.
\]  
Equipped with this norm, $L^p({\mathcal L}G)$ becomes a Banach space and is naturally contained in $A_p$. 
Moreover, $a^* \in L^p({\mathcal L}G)$ for all $a \in L^p({\mathcal L}G)$ and satisfies
\[
\norm{a^*}_p = \norm{a}_p.
\]  
By convention, $L^\infty({\mathcal L}G) = {\mathcal L}G$.

The positive cone $L^p({\mathcal L}G)^+$ consists by definition of those $a \in L^p({\mathcal L}G)$ that are positive in the sense of (closed and densely defined) operators over a Hilbert space, and a bounded operator $S \colon L^p({\mathcal L}G) \to L^p({\mathcal L}G)$ is called \emph{positive} if it maps $L^p({\mathcal L}G)^+$ into $L^p({\mathcal L}G)^+$.

For any $1 \leq p,q,r \leq \infty$ with $p^{-1} + q^{-1} = r^{-1}$, the product mapping
\[
A_{\frac{1}{p}} \times A_{\frac{1}{q}} \longrightarrow A_{\frac{1}{r}}
\]
arising from the above discussion restricts to a contractive bilinear map
\[
L^p({\mathcal L}G) \times L^q({\mathcal L}G) \longrightarrow L^r({\mathcal L}G).
\]
In particular, taking $q = \infty$ shows that, for every $p \geq 1$, the space
$L^p({\mathcal L}G)$ is a contractive ${\mathcal L}G$-bimodule.

The correspondence
\[
\varphi \in {\mathcal L}G_*^+ \longmapsto \frac{d\varphi}{d\psi_0} \in L^1({\mathcal L}G)^+
\]
extends to an isometric ${\mathcal L}G$-bimodule isomorphism \(\tau \colon {\mathcal L}G_* \to L^1({\mathcal L}G)\). This identification allows one to define a contractive linear functional
\[
\mathrm{Tr} \colon L^1({\mathcal L}G) \longrightarrow \Cdb
\]
by setting $\mathrm{Tr}(\tau(\varphi)) = \varphi(1)$ for every $\varphi \in {\mathcal L}G_*$. This functional is usually called the integral with respect to $\psi_0$ (see \cite[p.~203]{Hiai} or \cite[p.~554]{Terp2}) and provides the appropriate replacement for a trace in the non-unimodular setting.

Let $p' = p/(p-1)$ denote the conjugate exponent of $p$. Using this functional, one obtains a natural duality pairing between \(L^p({\mathcal L}G)\) and \(L^{p'}({\mathcal L}G)\) given by
\[
\langle a, b \rangle = \mathrm{Tr}(ab), \qquad 
a \in L^p({\mathcal L}G),\ b \in L^{p'}({\mathcal L}G).
\]
This pairing identifies \(L^{p'}({\mathcal L}G)\) isometrically with the dual space of \(L^p({\mathcal L}G)\).

Finally, the space $L^2({\mathcal L}G)$ carries a Hilbert space structure with inner product
\[
(a \mid b) = \mathrm{Tr}(b^* a), \qquad a,b \in L^2({\mathcal L}G).
\]
\smallskip
\subsection{Some dense subspaces}
In this section, we introduce certain dense subspaces of $L^p(\mathcal{L}G)$ which form the foundation for defining bounded Fourier multipliers. We first recall the subspaces of $\mathcal{L}G$ commonly used in connection with the left Plancherel weight $\varphi_0$. Let
\begin{equation}\label{3M}
\N = \{ x \in {\mathcal L}G \,:\, \varphi_0(x^*x) < \infty \}
\qquad\text{and}\qquad
\M = \mathrm{Span}\{ y^* x \,:\, x,y \in \N \}.
\end{equation}
It is well-known that $\N$ is a $w^*$-dense left ideal of ${\mathcal L}G$, while
$\M$ is a $w^*$-dense $*$-subalgebra of ${\mathcal L}G$; see, for example,
\cite[Section VII.1]{Tak2}.  Applying \cite[Theorem 26]{Terp1}, which holds for general Connes-Hilsum $L^p$-spaces, we obtain the following result.

\begin{lemma}\label{2Inc}
Let $1 \leq p < \infty$.
\begin{itemize}
\item[(1)] If $p \geq 2$, then $x \Delta^{\frac{1}{p}} \in L^p({\mathcal L}G)$ for all $x \in \N$.
\item[(2)] For any $p$, the product $\Delta^{\frac{1}{2p}} x \Delta^{\frac{1}{2p}}$ belongs to 
$L^p({\mathcal L}G)$ for all $x \in \M$.
\end{itemize}
\end{lemma}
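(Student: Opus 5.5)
The plan is to derive both statements from Terp's general result \cite[Theorem 26]{Terp1}, specialised to the triple $({\mathcal L}G, L^2(G), \psi_0)$ with reference weight $\varphi_0$. For a normal semifinite faithful weight $\varphi$ on a von Neumann algebra $M$ acting on $H$, with spatial derivative $D=\frac{d\varphi}{d\psi}$, that theorem says the following. If $x\in\mathfrak n_\varphi$ and $p\ge 2$, then $xD^{1/p}$ is closable and its closure lies in $L^p(M,\psi)$. Moreover, $\|xD^{1/p}\|_p^p$ is controlled by $\varphi(x^*x)$; more precisely, $|xD^{1/2}|^2$ corresponds to the functional $\omega$ with $\omega(1)=\varphi(x^*x)$, i.e.\ the GNS vector functional $\omega_{\Lambda_\varphi(x)}$. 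For $x\in\mathfrak m_\varphi$ and any $1\le p<\infty$, the symmetric product $D^{1/2p}xD^{1/2p}$ lies in $L^p$. Once (\ref{2Standard}) identifies $D=\Delta$ and one notes $\mathfrak n_{\varphi_0}=\N$ and $\mathfrak m_{\varphi_0}=\M$ by (\ref{3M}), the lemma follows. The proof therefore amounts to checking these identifications and verifying that the strong products agree with the products in Terp's statement.

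If I had to give a self-contained argument, I would proceed in three steps.

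First, the case $p=2$ of (1). For $x\in\N$, I would show that $x\Delta^{1/2}\in A_{1/2}$ (by (\ref{2Strong}), since $x\in A_0$ and $\Delta^{1/2}\in A_{1/2}$) and that $|x\Delta^{1/2}|^2=\Delta^{1/2}x^*x\Delta^{1/2}$ is the spatial derivative of the weight $y\mapsto\varphi_0(x^*yx)$. That weight is the positive normal functional $\omega_{\Lambda_{\varphi_0}(x)}$, since $\varphi_0(x^*yx)=\langle y\Lambda(x),\Lambda(x)\rangle$. This is the Connes spatial derivative identity $\frac{d\varphi_0(x^*\cdot x)}{d\psi_0}=\Delta^{1/2}x^*x\Delta^{1/2}$ in its strong product sense, and it gives $x\Delta^{1/2}\in L^2$ with $\|x\Delta^{1/2}\|_2^2=\varphi_0(x^*x)$.

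Second, statement (2) for $p=1$. A polarisation of the first step shows that $\Delta^{1/2}y^*x\Delta^{1/2}=(y\Delta^{1/2})^*(x\Delta^{1/2})$ lies in $L^2\cdot L^2\subset L^1$, using the contractive product $L^2\times L^2\to L^1$. Linearity then extends this to all of $\M$.

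Third, the general $p$, by interpolation. For (1) with $2\le p<\infty$, I would use the fact that $x\in{\mathcal L}G=L^\infty$ and $x\Delta^{1/2}\in L^2$, so that the family $z\mapsto x\Delta^{z/2}$ on the strip $0\le\mathrm{Re}\,z\le1$ lies in the complex interpolation space $[L^\infty,L^2]_\theta=L^{2/\theta}$ (Kosaki/Terp interpolation). Taking $\theta=2/p$ yields $x\Delta^{1/p}\in L^p$. For (2), I would similarly interpolate $z\mapsto\Delta^{z/2}x\Delta^{z/2}$ between $x\in L^\infty$ and $\Delta^{1/2}x\Delta^{1/2}\in L^1$.

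I expect the main obstacle to be this interpolation step. Identifying the Connes--Hilsum spaces as a complex interpolation scale requires care, and for that reason I would not re-prove it. Instead, I would directly cite \cite[Theorem 26]{Terp1}, which packages exactly these conclusions, together with (\ref{2Standard}).
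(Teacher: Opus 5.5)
Your main route is the paper's own: there the lemma is obtained simply by specialising \cite[Theorem 26]{Terp1} to $({\mathcal L}G,L^2(G),\psi_0)$, using (\ref{2Standard}) to identify $\frac{d\varphi_0}{d\psi_0}$ with $\Delta$ and (\ref{3M}) to identify $\N,\M$ with $\mathfrak n_{\varphi_0},\mathfrak m_{\varphi_0}$. One correction, both to your description of that theorem and to the first step of your optional sketch: the spatial derivative of the functional $y\mapsto\varphi_0(x^*yx)=\omega_{\Lambda_{\varphi_0}(x)}(y)$ is $(x\Delta^{\frac12})(x\Delta^{\frac12})^*=\vert(x\Delta^{\frac12})^*\vert^2$, not $\vert x\Delta^{\frac12}\vert^2=\Delta^{\frac12}x^*x\Delta^{\frac12}$ (the latter corresponds to $\omega_{{\mathfrak J}\Lambda_{\varphi_0}(x)}$), so a direct argument first gives $(x\Delta^{\frac12})^*\in L^2({\mathcal L}G)$, then $x\Delta^{\frac12}\in L^2({\mathcal L}G)$ by taking adjoints, with the same identity $\norm{x\Delta^{\frac12}}_2^2=\varphi_0(x^*x)$.
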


In the special case $p = 2$, we further have 
$\lambda(L^1(G) \cap L^2(G)) \subset \N$ and
\begin{equation}\label{2FP}
\norm{\lambda(f)\Delta^{\frac12}}_2 = \norm{f}_2, \qquad 
f \in L^1(G) \cap L^2(G),
\end{equation}
see \cite[Theorem 3.2]{Terp2}.

Let $C_c(G) \star C_c(G)$ denote the linear span of all convolutions $f \star g$ with $f,g \in C_c(G)$.  
It follows from Lemma~\ref{2Inc} that for all \(1 \leq p < \infty\),
\[
\Delta^{\frac{1}{2p}} \, \lambda(C_c(G) \star C_c(G)) \, \Delta^{\frac{1}{2p}} \subset L^p(\mathcal{L}G),
\]
and for all \(2 \leq p < \infty\),
\[
\lambda(C_c(G)) \, \Delta^{\frac{1}{p}} \subset L^p(\mathcal{L}G).
\]

The next lemma establishes that these subspaces are in fact dense.

\begin{lemma}\label{2Dense}
Let $1 \leq p < \infty$.
\begin{itemize}
\item[(1)] If $p \geq 2$, then $\lambda(C_c(G)) \Delta^{\frac{1}{p}}$ is a dense subspace of
$L^p({\mathcal L}G)$.
\item[(2)] For any $p$, the space
\[
\Delta^{\frac{1}{2p}} \lambda(C_c(G) \star C_c(G)) \Delta^{\frac{1}{2p}}
\]
is dense in $L^p({\mathcal L}G)$.
\end{itemize}
\end{lemma}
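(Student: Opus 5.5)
We argue by duality: a subspace $V\subset L^p({\mathcal L}G)$ is dense as soon as every $b\in L^{p'}({\mathcal L}G)$ with $\mathrm{Tr}(vb)=0$ for all $v\in V$ vanishes. For $p=1$ we use $L^1({\mathcal L}G)^*={\mathcal L}G$ with the same pairing. Both parts then reduce to showing that the relevant annihilators are trivial. The main tool is that $\mathrm{Tr}$ of such products can be written as a Plancherel-type integral that sees $b$ through the $w^*$-dense sets $\lambda(C_c(G))$ and $\lambda(C_c(G)\star C_c(G))$.

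\emph{Part (1).} Let $p\ge2$ and take $b\in L^{p'}({\mathcal L}G)$ with $\mathrm{Tr}(\lambda(f)\Delta^{\frac1p}b)=0$ for all $f\in C_c(G)$. Since $\Delta^{\frac1p}b\in A_{\frac1p+\frac1{p'}}=A_1$, we want to identify the functional $x\mapsto \mathrm{Tr}(x\Delta^{\frac1p}b)$ on ${\mathcal L}G$. First I would check that it is a normal functional $\omega_b\in{\mathcal L}G_*$, at least when $b$ is suitably nice. Note that $\Delta^{\frac1p}b$ need not lie in $L^1({\mathcal L}G)$. For $b$ of the form $\Delta^{\frac1{p'}}y^*$ with $y\in\N$ (which lies in $L^{p'}$ when $p'\ge2$; otherwise we use the symmetric form), one gets
\[
\Delta^{\frac1p}b=\Delta^{\frac12}\Delta^{\frac12}y^* ,
\]
and $\mathrm{Tr}(x\Delta^{\frac1p}b)=\varphi_0(\cdot)$-type expressions.

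A cleaner route is to use Lemma~\ref{2Inc} together with the isometry $\norm{\lambda(f)\Delta^{\frac12}}_2=\norm{f}_2$. This gives density at $p=2$ directly: $\lambda(C_c(G))\Delta^{\frac12}$ is the image of the dense subspace $C_c(G)\subset L^2(G)$ under the unitary $f\mapsto\lambda(f)\Delta^{\frac12}$ onto $L^2({\mathcal L}G)$. Surjectivity of that unitary is part of \cite[Theorem 3.2]{Terp2}.

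For general $p\ge2$ I would then interpolate. By Lemma~\ref{2Inc}, $\N\Delta^{\frac1p}\subset L^p$, and the sets $\N\Delta^{\frac1p}$ are dense in $L^p$ by \cite{Terp1}. So it suffices to approximate $x\Delta^{\frac1p}$, for $x\in\N$, by elements $\lambda(f)\Delta^{\frac1p}$. Here I would use the three-lines/Hölder estimate
\[
\norm{x\Delta^{\frac1p}}_p\le \norm{x}_\infty^{1-\frac2p}\,\norm{x\Delta^{\frac12}}_2^{\frac2p}.
\]
This is a Kosaki-type interpolation inequality for left-embedded elements. It reduces the problem to finding $f_n\in C_c(G)$ with $\lambda(f_n)\Delta^{\frac12}\to x\Delta^{\frac12}$ in $L^2$ and $\sup_n\norm{\lambda(f_n)}_\infty<\infty$. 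This is a Kaplansky-density statement for the left Hilbert algebra $C_c(G)$. I expect it to be the main obstacle. To handle it, I would first approximate by $\lambda(g)$ with $g\in L^1\cap L^2$ via the standard left Hilbert algebra theory \cite[Section VII]{Tak2}, then truncate and mollify with a bounded approximate identity.

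\emph{Part (2).} Take $b\in L^{p'}({\mathcal L}G)$ annihilating $\Delta^{\frac1{2p}}\lambda(g^*\star f)\Delta^{\frac1{2p}}$ for all $f,g\in C_c(G)$. By Lemma~\ref{2Inc}(2) and the known density of $\Delta^{\frac1{2p}}\M\Delta^{\frac1{2p}}$ in $L^p$ (\cite[Theorem 26]{Terp1} and its density companion), it suffices to approximate $\Delta^{\frac1{2p}}y^*x\Delta^{\frac1{2p}}$, for $x,y\in\N$. Writing
\[
\Delta^{\frac1{2p}}y^*x\Delta^{\frac1{2p}}=(\Delta^{\frac1{2p}}y^*)(x\Delta^{\frac1{2p}}),
\]
Hölder's inequality for the product $L^{2p}\times L^{2p}\to L^p$ reduces this to approximating $x\Delta^{\frac1{2p}}$ by $\lambda(f)\Delta^{\frac1{2p}}$ in $L^{2p}$, and likewise for $y$. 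Since $2p\ge2$, this is exactly part (1) applied with exponent $2p$. We then use $\lambda(g)^*\lambda(f)=\lambda(g^*\star f)$ with $g^*\in C_c(G)$ and bilinearity to conclude.
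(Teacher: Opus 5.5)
Your argument is correct, but part (1) follows a different route from the paper. The paper gets density of $\lambda(C_c(G))\Delta^{\frac1p}$ directly from the proof of Terp's Hausdorff--Young theorem \cite[Theorem 4.5, (2)]{Terp2}. You instead reduce it to three general facts:
\begin{itemize}
\item[(i)] density of $\N\Delta^{\frac1p}$ in $L^p({\mathcal L}G)$;
\item[(ii)] the interpolation inequality $\norm{x\Delta^{\frac1p}}_p\le\norm{x}^{1-\frac2p}\norm{x\Delta^{\frac12}}_2^{\frac2p}$ for $x\in\N$ and $p\ge2$, applied to $x-\lambda(f_n)\in\N$; it holds, e.g.\ via an Araki--Lieb--Thirring argument giving $\norm{x\Delta^{\frac1p}}_p^p\le\varphi_0((x^*x)^{\frac p2})\le\norm{x}^{p-2}\varphi_0(x^*x)$;
\item[(iii)] a Kaplansky-type approximation.
\end{itemize}
Your route imports more, but it is structural and works verbatim for any left Hilbert algebra. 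The paper's proof is a one-line citation tied to the group setting.

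The step you should make precise is (iii), and ``truncate'' is not the real mechanism. Truncation preserves operator-norm control only for $L^1$ functions, via $\norm{\lambda(g)}\le\norm{g}_1$; the real content is getting bounded $\lambda$-norms in the first place. A clean way to do this:
\begin{itemize}
\item[(a)] Let $\xi\in L^2(G)$ correspond to $x\Delta^{\frac12}$, so that $xv=\xi\star v$ for $v\in C_c(G)$.
\item[(b)] Kaplansky's density theorem for the $*$-algebra $\lambda(C_c(G))$ gives $\eta_i\in C_c(G)$ with $\norm{\lambda(\eta_i)}\le\norm{x}$ and $\lambda(\eta_i)\to x$ strongly.
\item[(c)] Take $\eta_i\star v_j$, with $(v_j)$ a positive approximate unit in $C_c(G)$. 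Then $\norm{\lambda(\eta_i\star v_j)}\le\norm{x}\norm{v_j}_1$, and $\eta_i\star v_j=\lambda(\eta_i)v_j\to xv_j=\xi\star v_j\to\xi$ in $L^2(G)$.
\end{itemize}

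Part (2) is the paper's argument: part (1) at exponent $2p$ plus the contractive product $L^{2p}\times L^{2p}\to L^p$. The detour through density of $\Delta^{\frac1{2p}}\M\Delta^{\frac1{2p}}$ is unnecessary, since any $c=u\vert c\vert\in L^p({\mathcal L}G)$ equals $(u\vert c\vert^{\frac12})\vert c\vert^{\frac12}$ with both factors in $L^{2p}({\mathcal L}G)$. The abandoned duality paragraph at the start of part (1) can simply be deleted.
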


\begin{proof}
Assertion~(1) follows from the proof of \cite[Theorem~4.5, (2)]{Terp2}.  
Assertion~(2) is obtained by applying~(1) with the index $2p$, and then using the contractive
product
\[
L^{2p}({\mathcal L}G) \times L^{2p}({\mathcal L}G) \longrightarrow L^p({\mathcal L}G).
\qedhere\]
\end{proof}

\medskip
\section{Bounded Fourier multipliers on $L^p({\mathcal L}G)$}\label{3}

In this section, we introduce bounded Fourier multipliers on $L^p(\mathcal{L}G)$ and show that all natural definitions of these multipliers are equivalent. 
Our approach relies on the modular structure associated with the left Plancherel weight $\varphi_0$ and on the identification of a convenient dense subspace of $L^p(\mathcal{L}G)$ on which Fourier multipliers can be defined in a canonical way. 
This subspace consists of operators arising from compactly supported functions and enjoys good stability and analyticity properties with respect to the modular action. 

Let $(\sigma_t)_{t\in{\mathbb R}}$ be the modular 
automorphism group of the weight  $\varphi_0$. 
Then 
$$
\sigma_t(x) = \Delta^{it}x\Delta^{-it}
$$
for all $x\in{\mathcal L}G$ and all $t\in\Rdb$, see \cite[Section 2]{Haag}.
This implies that
$$
\sigma_t(\lambda(f)) =\lambda(\Delta^{it}f),\qquad f\in L^1(G),\, t\in\Rdb.
$$
We denote by ${\mathcal L}G_a$ the set of all $x\in{\mathcal L}G$ such that $t\mapsto\sigma_t(x)$
extends to an entire function on $\Cdb$.
Such elements are called analytic, see e.g. \cite[Section VIII.2]{Tak2}.

Next, we introduce the space
$$
\H= \bigl\{f\in L^2(G)\, :\, {\rm Supp}(f)\ \hbox{is compact}\bigr\}.
$$
Clearly, $\H\subset L^1(G)\cap L^2(G)$, so $\lambda(f)$ is well-defined 
for $f\in\H$.
Moreover, for any 
$z\in\Cdb$ we have $\Delta^{z}f\in\H$. Hence $\lambda(f)$ 
belongs to ${\mathcal L}G_a$ and we have
$$
\sigma_{-iz}(\lambda(f)) = \lambda(\Delta^{z}f),\qquad f\in\H,\, z\in\Cdb.
$$
It therefore follows from the above that
\begin{equation}\label{3Comm}
\lambda(\Delta^z f) = \Delta^z\lambda(f)\Delta^{-z},\qquad f\in\H,\, z\in\Cdb.
\end{equation}
We also observe that
\begin{equation}\label{3HH}
\H\star\H\subset\H
\qquad\hbox{and}\qquad 
L^\infty(G)\,\cdotp \H=\H.
\end{equation}

For any $1\leq p<\infty$, we introduce
$$
{\mathcal B}_p = \Delta^{\frac{1}{2p}} \lambda(\H\star \H) \Delta^{\frac{1}{2p}}.
$$
By Lemmas \ref{2Inc} and \ref{2Dense}, this is a dense subspace of $L^p({\mathcal L}G)$.

\begin{definition}\label{3BFM} 
Let $\phi\in L^\infty(G)$ and let $1\leq p<\infty$. We say that $\phi$ is a bounded Fourier multiplier 
on $L^p({\mathcal L}G)$ if there exists a constant $C\geq 0$ such that
for all $f\in\H\star\H$,
$$
\Delta^{\frac{1}{2p}} \lambda(\phi f) \Delta^{\frac{1}{2p}} \in L^p({\mathcal L}G)
\qquad\hbox{and}\qquad
\bignorm{\Delta^{\frac{1}{2p}} \lambda(\phi f) \Delta^{\frac{1}{2p}}}_p\leq 
C\bignorm{\Delta^{\frac{1}{2p}} \lambda(f) \Delta^{\frac{1}{2p}}}_p.
$$
\end{definition}

Note that by (\ref{2Strong}), we know that 
$\Delta^{\frac{1}{2p}} \lambda(\phi f) \Delta^{\frac{1}{2p}}$ belongs
to $A_p$ for any $f\in\H\star \H$. However, there is no a priori reason for it to belong to
$L^p({\mathcal L}G)$.

If $\phi$ is a bounded Fourier multiplier 
on $L^p({\mathcal L}G)$, the mapping
$$
\Delta^{\frac{1}{2p}} \lambda(f) \Delta^{\frac{1}{2p}}\mapsto
\Delta^{\frac{1}{2p}} \lambda(\phi f) \Delta^{\frac{1}{2p}},\qquad f\in \H,
$$
uniquely extends to a bounded operator
$
M_{\phi,p}\colon L^p({\mathcal L}G)\longrightarrow L^p({\mathcal L}G).
$

We now collect a series of remarks showing that all reasonable formulations of bounded Fourier multipliers on $L^p({\mathcal L}G)$ are equivalent.

\begin{remark}\label{3Theta}
Consider $\phi\in L^\infty(G)$, $1\le p<\infty$, and $\theta\in[0,1]$. 
We show that $\phi$ is a bounded Fourier multiplier on $L^p({\mathcal L}G)$
if and only if there exists a constant $C\ge 0$ such that, for all
$f\in\H\star\H$,
\begin{equation}\label{3Alt}
\Delta^{\frac{\theta}{p}} \lambda(\phi f) \Delta^{\frac{1-\theta}{p}}
\in L^p({\mathcal L}G)
\quad\text{and}\quad
\bigl\|
\Delta^{\frac{\theta}{p}} \lambda(\phi f) \Delta^{\frac{1-\theta}{p}}
\bigr\|_p
\le C
\bigl\|
\Delta^{\frac{\theta}{p}} \lambda(f) \Delta^{\frac{1-\theta}{p}}
\bigr\|_p.
\end{equation}

Assume that $\theta\in\bigl[\frac12,1\bigr]$ and let
$c\ge 0$ satisfy $\frac{\theta}{p}=\frac{1}{2p}+c$. 
For any $f\in\H\star\H$, using~(\ref{3Comm}) we obtain
\[
\Delta^{\frac{\theta}{p}}\lambda(f)\Delta^{\frac{1-\theta}{p}}
=
\Delta^{\frac{1}{2p}}\lambda(\Delta^c f)\Delta^{\frac{1}{2p}}.
\]
Since $\Delta$ is multiplicative, 
multiplication by $\Delta^c$ maps
$\H\star\H$ onto itself, and therefore
\[
{\mathcal B}_p
=
\Delta^{\frac{\theta}{p}} \lambda(\H\star\H) \Delta^{\frac{1-\theta}{p}}.
\]

Similarly, by~(\ref{3HH}) and~(\ref{3Comm}), for all $f\in\H\star\H$ we have
\[
\Delta^{\frac{\theta}{p}}\lambda(\phi f)\Delta^{\frac{1-\theta}{p}}
=
\Delta^{\frac{1}{2p}}\lambda(\phi\Delta^c f)\Delta^{\frac{1}{2p}}.
\]
It follows that the  inequality in (\ref{3Alt})
is equivalent to the boundedness of the map
\[
\Delta^{\frac{1}{2p}}\lambda(f)\Delta^{\frac{1}{2p}}
\longmapsto
\Delta^{\frac{1}{2p}}\lambda(\phi f)\Delta^{\frac{1}{2p}}
\]
on the dense subspace $\Delta^{\frac{1}{2p}}\lambda(\H\star\H)\Delta^{\frac{1}{2p}}$
of $L^p({\mathcal L}G)$, which is precisely the definition of $\phi$ being
a bounded Fourier multiplier on $L^p({\mathcal L}G)$. In this case,
\[
M_{\phi,p}\Bigl(
\Delta^{\frac{\theta}{p}} \lambda(f) \Delta^{\frac{1-\theta}{p}}
\Bigr)
=
\Delta^{\frac{\theta}{p}} \lambda(\phi f) \Delta^{\frac{1-\theta}{p}},
\qquad f\in\H\star\H.
\]

An analogous argument applies when $\theta\in\bigl[0,\frac12\bigr]$,
writing $\frac{1-\theta}{p}=\frac{1}{2p}+c$.
\end{remark}

In the case $p=2$, the situation simplifies considerably, and we have the following.

\begin{corollary}\label{3Always}
Let $\phi\in L^\infty(G)$. Then $\phi$ is a bounded Fourier multiplier on $L^2({\mathcal L}G)$ and moreover,
$\norm{M_{\phi,2}}= \norm{\phi}_\infty.$
\end{corollary}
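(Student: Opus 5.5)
The plan is to apply Remark~\ref{3Theta} with $p=2$ and $\theta=0$, which turns the $L^2$-norms into $L^2(G)$-norms via the isometry (\ref{2FP}). With this choice, $\phi$ is a bounded Fourier multiplier on $L^2({\mathcal L}G)$ if and only if, for every $f\in\H\star\H$, the operator $\lambda(\phi f)\Delta^{\frac12}$ lies in $L^2({\mathcal L}G)$ and
\[
\bignorm{\lambda(\phi f)\Delta^{\frac12}}_2\le C\bignorm{\lambda(f)\Delta^{\frac12}}_2 .
\]

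First, for $f\in\H\star\H$ we have $\phi f\in\H\subset L^1(G)\cap L^2(G)$ by (\ref{3HH}). Hence $\lambda(\phi f)\in\N$, and Lemma~\ref{2Inc}(1) with $p=2$ gives $\lambda(\phi f)\Delta^{\frac12}\in L^2({\mathcal L}G)$. Then (\ref{2FP}) yields
\[
\bignorm{\lambda(\phi f)\Delta^{\frac12}}_2=\norm{\phi f}_2\le\norm{\phi}_\infty\norm{f}_2=\norm{\phi}_\infty\bignorm{\lambda(f)\Delta^{\frac12}}_2 .
\]
(The middle inequality is valid for $L^\infty(G)$ in the locally-a.e. sense, since $f$ has compact support.) So $\phi$ is a bounded Fourier multiplier and $\norm{M_{\phi,2}}\le\norm{\phi}_\infty$.

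For the reverse inequality, the map $U\colon h\mapsto \lambda(h)\Delta^{\frac12}$ is, by (\ref{2FP}), an isometry from $\H\star\H$ with the $L^2(G)$-norm onto the dense subspace $\lambda(\H\star\H)\Delta^{\frac12}$ of $L^2({\mathcal L}G)$. Density follows from Remark~\ref{3Theta} and Lemma~\ref{2Dense}. The space $\H\star\H\supset C_c(G)\star C_c(G)$ is dense in $L^2(G)$, using approximate identities in $C_c(G)$. Hence $U$ extends to a unitary $L^2(G)\to L^2({\mathcal L}G)$, and $U^{-1}M_{\phi,2}U$ agrees with the multiplication operator by $\phi$ on a dense subspace of $L^2(G)$. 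Therefore $\norm{M_{\phi,2}}$ equals the operator norm of multiplication by $\phi$ on $L^2(G)$.

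It remains to show that this norm equals $\norm{\phi}_\infty$ for $\phi$ in the locally-a.e. sense $L^\infty(G)$; this is the main (mild) obstacle. Suppose $\norm{\phi}_\infty>c$. Then $\{|\phi|>c\}$ is not locally null, so it meets some compact set $K$ in a set $E$ of positive finite measure. Taking $h=\chi_E$ gives $\norm{\phi h}_2\ge c\norm{h}_2$, and letting $c\uparrow\norm{\phi}_\infty$ gives the equality.
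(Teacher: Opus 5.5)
Your proposal is correct and follows the paper's own route: the paper combines Remark~\ref{3Theta} (with $p=2$, $\theta=0$) with the isometry (\ref{2FP}), and you additionally spell out the reverse inequality $\norm{M_{\phi,2}}\geq\norm{\phi}_\infty$ for the locally-a.e.\ $L^\infty(G)$, which the paper leaves implicit. One small point should be made explicit: since $\phi h$ need not lie in $\H\star\H$, you need the extension of $U$ to coincide with $g\mapsto\lambda(g)\Delta^{\frac12}$ on all of $L^1(G)\cap L^2(G)$ (immediate from (\ref{2FP}) holding there), and this is what gives $U^{-1}M_{\phi,2}Uh=\phi h$ for $h\in\H\star\H$.
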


\begin{proof}
Let $f\in\H\star \H$. Using (\ref{2FP}), we have
$\norm{\lambda(f)\Delta^\frac12}_2=\norm{f}_2$ and
$\norm{\lambda(\phi f)\Delta^\frac12}_2=\norm{\phi f}_2$.  
The result then follows from Remark \ref{3Theta} applied with $p=2$ and $\theta=0$.
\end{proof}

\begin{remark}\label{3p-geq-2}
Let $\phi\in L^\infty(G)$ and let $2\leq p<\infty$.

\smallskip
{\bf (1) }
We set
$$
{\mathcal C}_p = \lambda(\H) \Delta^{\frac{1}{p}}.
$$
It follows from Lemmas \ref{2Inc} and \ref{2Dense} that this is a dense subspace of 
$L^p({\mathcal L}G)$. Moreover by the second half of (\ref{3HH}),
$\lambda(\phi f) \Delta^{\frac{1}{p}}$ belongs to 
$L^p({\mathcal L}G)$ for any $f\in\H$. We claim that 
$\phi$ is a bounded Fourier multiplier 
on $L^p({\mathcal L}G)$ if and only if there exists a constant $C\geq 0$ such that
\begin{equation}\label{3Alternate}
\bignorm{\lambda(\phi f)\Delta^{\frac{1}{p}}}_p\leq C
\bignorm{\lambda(f)\Delta^{\frac{1}{p}}}_p,\qquad f\in\H.
\end{equation}
The `if part' follows from the first half of Definition \ref{3BFM} and Remark
\ref{3Theta} applied with $\theta=0$.

To prove the `only if part', assume that 
$\phi$ is a bounded Fourier multiplier 
on $L^p({\mathcal L}G)$ and consider the resulting map
$M_{\phi,p}\colon L^p({\mathcal L}G)\rightarrow L^p({\mathcal L}G)$.
We need \cite[Theorem 4.5, (1)]{Terp2} which asserts that
\begin{equation}\label{3Inter}
\bignorm{\lambda(f)\Delta^{\frac{1}{p}}}_p\leq \norm{f}_{p'},
\qquad f\in L^1(G)\cap L^2(G).
\end{equation}
Here $p'$ is the conjugate number of $p$. 

Let $f\in \H$. 
There exists a compact subset $K\subset G$ and a  sequence $(g_n)_{n\geq 1}$ in 
$\H\star \H$ such that ${\rm Supp}(g_n)\subset K$ for all $n\geq 1$ and 
$\norm{g_n -f}_{p'}\to 0$. 
This implies that $\norm{\phi g_n -\phi f}_{p'}\to 0$. Then, applying (\ref{3Inter})
twice, we see that 
$$
\lambda(\phi f)\Delta^{\frac{1}{p}}=\lim_n
\lambda(\phi g_n)\Delta^{\frac{1}{p}} = \lim_n M_{\phi,p}\bigl(
\lambda(g_n)\Delta^{\frac{1}{p}} \bigr) = 
M_{\phi,p}\bigl(
\lambda(f)\Delta^{\frac{1}{p}} \bigr).
$$
This yields (\ref{3Alternate}), with $C=\norm{M_{\phi,p}}$.

\smallskip
{\bf (2)}   Arguing as in Remark \ref{3Theta}, we obtain that for any 
$\theta\in[0,1]$, 
$$
{\mathcal C}_p = \Delta^{\frac{\theta}{p}}\lambda(\H) \Delta^{\frac{(1-\theta)}{p}},
$$
and that $\phi$ is a bounded Fourier multiplier 
on $L^p({\mathcal L}G)$ if and only if there exists a constant $C\geq 0$ such that
$$
\bignorm{\Delta^{\frac{\theta}{p}}\lambda(\phi f)\Delta^{\frac{(1-\theta)}{p}}}_p\leq C
\bignorm{\Delta^{\frac{\theta}{p}}\lambda(f)\Delta^{\frac{(1-\theta)}{p}}}_p,\qquad f\in\H.
$$
Further in this case,
$$
M_{\phi,p}\Bigl(\Delta^{\frac{\theta}{p}} \lambda(f) \Delta^{\frac{(1-\theta)}{p}}\Bigr)
= \Delta^{\frac{\theta}{p}} \lambda(\phi f) \Delta^{\frac{(1-\theta)}{p}},
\qquad f\in\H.
$$
\end{remark}

We conclude this section with a duality result that extends \cite[Lemma 6.4]{AK}.
We set $\check{g}(t)=g(t^{-1})$ for any $g\in L^1(G)+L^\infty(G)$. With this notation,
$g^*=\Delta^{-1}\overline{\check{g}}$ for all $g\in L^1(G)$. 
It is plain that for any $g\in\H$, $\Delta^{-\frac12}\overline{\check{g}}\in\H$.
Consequently, $g^*\in\H$ and hence by part (1) of Lemma \ref{2Inc},
$\Delta^{\frac12}\lambda(g) = \bigl(\lambda(g^*)\Delta^\frac12\bigr)^*\in L^2({\mathcal L}G)$. We now claim that
\begin{equation}\label{3Trace}
{\rm Tr}\bigl(\Delta^\frac12\lambda(g)\lambda(f)\Delta^\frac12\bigr) = \int_G\Delta(t)^{-1} g(t^{-1})f(t)\, dt,\qquad
f,g\in \H.
\end{equation}
By the above discussion, both sides of this equality are well defined.
Then by polarization, it suffices to prove it when $g=f^*$. In this case,
both sides of (\ref{3Trace}) are equal to $\norm{f}_2^2$, by (\ref{2FP}). Hence
the desired equality holds.

\begin{proposition}\label{3Duality} 
Let $1<p<\infty$, let $p'$ denote its conjugate number and let $\phi\in L^\infty(G)$. If $\phi$
is a bounded Fourier multiplier on $L^p({\mathcal L}G)$, 
then $\check{\phi}$ is a bounded Fourier multiplier on $L^{p'}({\mathcal L}G)$,
and 
$
M_{\phi,p}^*= M_{\check{\phi},p'}.
$
\end{proposition}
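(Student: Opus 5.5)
The plan is to compute the duality pairing $\langle M_{\phi,p}(a),b\rangle$ on the dense subspaces with the trace formula (\ref{3Trace}), and then move $\phi$ from the first argument to the second. I will use the asymmetric forms of the dense subspaces given by Remark \ref{3Theta}. For $f\in\H\star\H$, take $\theta=1$ with exponent $p$, so $a=\Delta^{\frac1p}\lambda(f)$. For $g\in\H\star\H$, take $\theta=0$ with exponent $p'$, so $b=\lambda(g)\Delta^{\frac1{p'}}$. Then
$$
ab=\Delta^{\frac1p}\lambda(f)\lambda(g)\Delta^{\frac1{p'}}=\Delta^{\frac12}\lambda(\Delta^{\frac1p-\frac12}f)\,\lambda(\Delta^{\frac12-\frac1p}g)\Delta^{\frac12},
$$
where I use (\ref{3Comm}) and associativity of strong products to shift the powers of $\Delta$ across. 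Writing $g_1=\Delta^{\frac1p-\frac12}f$ and $f_1=\Delta^{\frac12-\frac1p}g$, which both lie in $\H$, (\ref{3Trace}) gives
$$
\mathrm{Tr}(ab)=\int_G \Delta(t)^{-1}\Delta(t)^{-(\frac1p-\frac12)}f(t^{-1})\,\Delta(t)^{\frac12-\frac1p}g(t)\,dt
=\int_G\Delta(t)^{-\frac2p}f(t^{-1})g(t)\,dt .
$$
The exact power of $\Delta$ does not matter. What matters is that the pairing has the form $\int_G w(t)f(t^{-1})g(t)\,dt$ with a weight $w$ that does not depend on $f,g$.

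Next I replace $f$ by $\phi f$. By Remark \ref{3Theta}, $M_{\phi,p}(a)=\Delta^{\frac1p}\lambda(\phi f)$, and the same computation gives
$$
\langle M_{\phi,p}(a),b\rangle=\int_G w(t)\phi(t^{-1})f(t^{-1})g(t)\,dt=\langle a,\lambda(\check\phi g)\Delta^{\frac1{p'}}\rangle .
$$
One point needs care here: $\phi f$ need not lie in $\H\star\H$, so I cannot apply (\ref{3Trace}) to it as stated. However, (\ref{3Trace}) holds for all $f,g\in\H$, and $\phi f\in\H$ by (\ref{3HH}), so the computation still goes through.

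Then I bound the right-hand side. Since $|\langle a, b'\rangle|\le \|a\|_p\|b'\|_{p'}$ is the wrong direction for what I need, I instead use $|\langle M_{\phi,p}(a),b\rangle|\le \|M_{\phi,p}\|\|a\|_p\|b\|_{p'}$. Taking the supremum over $a$ in the dense set $\Delta^{\frac1p}\lambda(\H\star\H)$ of the unit ball, and using that $L^{p'}$ is isometrically the dual of $L^p$, this gives
$$
\|\lambda(\check\phi g)\Delta^{\frac1{p'}}\|_{p'}\le\|M_{\phi,p}\|\,\|\lambda(g)\Delta^{\frac1{p'}}\|_{p'},\qquad g\in\H\star\H .
$$
Before taking this supremum I must know that $\lambda(\check\phi g)\Delta^{\frac1{p'}}\in L^{p'}(\mathcal LG)$. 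By Remark \ref{3Theta} with $\theta=0$, this estimate means $\check\phi$ is a bounded Fourier multiplier on $L^{p'}$, provided the membership condition holds.

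I expect this membership to be the main obstacle when $p'<2$. For $p'\ge2$ it is automatic from Lemma \ref{2Inc}(1), since $\check\phi g\in\H$. In general, I would argue through the dual. The functional $a\mapsto \langle M_{\phi,p}a,b\rangle$ is represented by some $c\in L^{p'}$. I then need to identify $c$ with the operator $\lambda(\check\phi g)\Delta^{\frac1{p'}}\in A_{1/p'}$. The idea is to show that $c-\lambda(\check\phi g)\Delta^{\frac1{p'}}$ pairs to zero against the dense set of $a$, in a suitable sense inside the $A_\alpha$ calculus. Alternatively, I could avoid the issue by symmetry. If $p<2$, then $p'>2$, so membership in $L^{p'}$ is automatic and the argument above runs directly. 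If $p>2$, I can apply that automatic case to $\check\phi$ at exponent $p'$ in reverse, but only once I already know $\check\phi$ is bounded there, which is circular. So the identification of $c$ inside $A_{1/p'}$ is the step I would attack first.

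Once boundedness is established, the identity $M_{\phi,p}^*=M_{\check\phi,p'}$ follows from the displayed equation $\langle M_{\phi,p}a,b\rangle=\langle a,M_{\check\phi,p'}b\rangle$ on dense subspaces, together with continuity of both operators.
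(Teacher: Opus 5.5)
Your argument is fine for $1<p\le 2$. In that range $p'\ge 2$, so $\lambda(\check\phi g)\Delta^{\frac1{p'}}\in L^{p'}({\mathcal L}G)$ by Lemma~\ref{2Inc}(1), since $\check\phi g\in\H$. Your duality estimate is then essentially the paper's first case, with a different choice of $\theta$. There is one harmless slip: moving $\Delta^{\frac1p-\frac12}$ across $\lambda(f)\lambda(g)=\lambda(f\star g)$ multiplies \emph{both} functions by $\Delta^{\frac1p-\frac12}$. So $f_1=\Delta^{\frac1p-\frac12}g$, and the weight is $w=\Delta^{-1}$.

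For $p>2$ there is a genuine gap: the proposal stops exactly where the real work lies. Duality gives $\Gamma\in L^{p'}({\mathcal L}G)$ with $\mathrm{Tr}(a\Gamma)=\langle M_{\phi,p}a,b\rangle=\mathrm{Tr}(aY)$ for all test elements $a=\Delta^{\frac1p}\lambda(f)$, $f\in\H\star\H$, where $Y=\lambda(\check\phi g)\Delta^{\frac1{p'}}\in A_{\frac1{p'}}$. But $Y-\Gamma$ is not known to lie in $L^{p'}({\mathcal L}G)$. So the vanishing of these scalar pairings on a dense set proves nothing by itself, and ``pairs to zero in a suitable sense'' is not yet an argument.

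The missing idea, which is what the paper does, is to upgrade the scalar identity to an identity in $L^1({\mathcal L}G)$.

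First, $aY\in L^1({\mathcal L}G)$. Your own factorization, with the exponent corrected, writes it as $\bigl(\Delta^{\frac12}\lambda(\Delta^{\frac1p-\frac12}f)\bigr)\bigl(\lambda(\Delta^{\frac1p-\frac12}\check\phi g)\Delta^{\frac12}\bigr)$, a product of two elements of $L^2({\mathcal L}G)$.

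Second, the test class is stable under left multiplication by $\lambda(h)$, $h\in C_c(G)$. By (\ref{3Comm}), $\lambda(h)\Delta^{\frac1p}\lambda(f)=\Delta^{\frac1p}\lambda\bigl((\Delta^{-\frac1p}h)\star f\bigr)$, and $(\Delta^{-\frac1p}h)\star f\in\H\star\H$.

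Hence $\mathrm{Tr}\bigl(\lambda(h)(aY-a\Gamma)\bigr)=0$ for all $h\in C_c(G)$. Since $L^1({\mathcal L}G)\simeq{\mathcal L}G_*$ and $\lambda(C_c(G))$ is $w^*$-dense, this gives $aY=a\Gamma$ for every test element $a$, and from this $Y=\Gamma\in L^{p'}({\mathcal L}G)$ follows. The paper runs the same argument with the candidate on the left: it tests against $\lambda(f)\Delta^{\frac1p}$ and uses $\lambda(f\star\Delta^{\frac1p}h)\Delta^{\frac1p}=\lambda(f)\Delta^{\frac1p}\lambda(h)$.

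Without this step, boundedness of $\check\phi$ on $L^{p'}$ for $p'<2$ is not established. As you observe yourself, the symmetry shortcut is circular.
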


\begin{proof} 
Assume first that $p\leq 2$ and let $T=M_{\phi,p}$. Let $g\in\H$
and $f\in\H\star\H$. 
By Remark
\ref{3Theta} and  (\ref{3Comm}), we have
\begin{align*}
\Bigl\langle T^*\bigl(\Delta^{\frac{1}{p'}}\lambda(g)\bigr),
\Delta^{\frac{1}{p} -\frac12}\lambda(f)\Delta^{\frac12}\Bigr\rangle
& = \Bigl\langle \Delta^{\frac{1}{p'}}\lambda(g) , T\bigl(\Delta^{\frac{1}{p} -\frac12}\lambda(f)\Delta^{\frac12}\bigr)\Bigr\rangle\\
& = \Bigl\langle  \Delta^{\frac{1}{p'}}\lambda(g) ,\Delta^{\frac{1}{p} -\frac12}\lambda(\phi
f)\Delta^{\frac12}\Bigr\rangle\\
& = {\rm Tr}\Bigl(\Delta^{\frac{1}{p'}}\lambda(g) 
\Delta^{\frac{1}{p} -\frac12}\lambda(\phi
f)\Delta^{\frac12}\Bigr)\\
& =  {\rm Tr}\Bigl(\Delta^{\frac{1}{2}}\lambda
\bigl(\Delta^{\frac12-\frac{1}{p}} g\bigr)
\lambda(\phi
f)\Delta^{\frac12}\Bigr).
\end{align*}
Hence by (\ref{3Trace}),
$$
\Bigl\langle T^*\bigl(\Delta^{\frac{1}{p'}}\lambda(g)\bigr),
\Delta^{\frac{1}{p} -\frac12}\lambda(f)\Delta^{\frac12}\Bigr\rangle 
= \int_G\Delta(t)^{-\frac{1}{p'}-\frac12}g(t^{-1})\phi(t) f(t)\, dt.
$$
Changing $g$ into $\check{\phi} g$ and $T$ into $I_{L^p}$, this identity implies that
$$
\Bigl\langle T^*\bigl(\Delta^{\frac{1}{p'}}\lambda(g)\bigr),
\Delta^{\frac{1}{p} -\frac12}\lambda(f)\Delta^{\frac12}\Bigr\rangle 
=\Bigl\langle\Delta^{\frac{1}{p'}}\lambda(\check{\phi} g),
\Delta^{\frac{1}{p} -\frac12}\lambda(f)\Delta^{\frac12}\Bigr\rangle.
$$
The result follows.

We now assume that $p>2$. This case is a bit more delicate. Again, we let $T=M_{\phi,p}$. 
This time, we fix $g\in\H\star \H$
and consider an arbitrary  $f\in \H$. 
By the discussion before (\ref{3Trace}),
$\Delta^{\frac12}\lambda\bigl(\check{\phi}g)$
belongs to $L^2({\mathcal L}G)$ and hence
$$
\Delta^{\frac12}\lambda\bigl(\check{\phi}g)\Delta^{\frac{1}{p'}-\frac12}\lambda(f)\Delta^{\frac{1}{p}}\,\in L^1({\mathcal L}G).
$$
Then arguing as in the first case, we obtain that 
$$
\Bigl\langle T^*\bigl(\Delta^{\frac12}\lambda(g)\Delta^{\frac{1}{p'}-\frac12}\bigr),
\lambda(f)\Delta^{\frac{1}{p}}\Bigr\rangle=
{\rm Tr}\Bigl(\Delta^{\frac12}\lambda\bigl(\check{\phi}g)\Delta^{\frac{1}{p'}-\frac12}\lambda(f)\Delta^{\frac{1}{p}}\Bigr).
$$
If we can prove that 
\begin{equation}\label{Goal}
\Delta^{\frac12}\lambda\bigl(\check{\phi}g)\Delta^{\frac{1}{p'}-\frac12}\in L^{p'}({\mathcal L}G),
\end{equation}
then we obtain the result as in the first case. At this stage, we have an estimate
$$
\Bigl\vert 
{\rm Tr}\Bigl(\Delta^{\frac12}\lambda\bigl(\check{\phi}g)
\Delta^{\frac{1}{p'}-\frac12}\lambda(f)\Delta^{\frac{1}{p}}\Bigr)\Bigr\vert
\lesssim \bignorm{\lambda(f)\Delta^{\frac{1}{p}}}_p.
$$
Then by duality, there exists $\Gamma\in L^{p'}({\mathcal L}G)$ such that 
$$
{\rm Tr}\Bigl(\Delta^{\frac12}\lambda\bigl(\check{\phi}g)\Delta^{\frac{1}{p'}-\frac12}\lambda(f)\Delta^{\frac{1}{p}}\Bigr)
={\rm Tr}\Bigl(\Gamma\lambda(f)\Delta^{\frac{1}{p}}\Bigr),\qquad f\in \H.
$$
Using (\ref{3HH}), we may change $f$ into $f\star \Delta^{\frac{1}{p}}h$  for any $h\in\H$ in the above identity.
Then using  (\ref{3Comm}), we write
$$
\lambda(f\star \Delta^{\frac{1}{p}}h)
\Delta^{\frac{1}{p}}
=\lambda(f)\lambda\bigl(\Delta^{\frac{1}{p}}h\bigr)\Delta^{\frac{1}{p}}
=\lambda(f)\Delta^{\frac{1}{p}}\lambda(h)
$$ 
and we obtain
$$
{\rm Tr}\Bigl(\Delta^{\frac12}\lambda\bigl(\check{\phi}g)
\Delta^{\frac{1}{p'}-\frac12}\lambda(f)\Delta^{\frac{1}{p}}\lambda(h)\Bigr)
={\rm Tr}\Bigl(\Gamma\lambda(f)\Delta^{\frac{1}{p}}\lambda(h)\Bigr),\qquad f,h\in \H.
$$
Since $C_c(G)\subset \H$ and $\lambda(C_c(G))$ is $w^*$-dense in ${\mathcal L}G$,
this implies
$$
\Delta^{\frac12}\lambda\bigl(\check{\phi}g)
\Delta^{\frac{1}{p'}-\frac12}\lambda(f)\Delta^{\frac{1}{p}}
=\Gamma\lambda(f)\Delta^{\frac{1}{p}},\qquad f\in\H.
$$
We deduce  that $\Delta^{\frac12}\lambda\bigl(\check{\phi}g)\Delta^{\frac{1}{p'}-\frac12}=\Gamma$, and (\ref{Goal}) follows.
\end{proof}

\medskip
\section{Extension properties}\label{4}

In this section, we leverage the Haagerup--Junge--Xu extension theorem from \cite{HJX} to transfer positivity and contractivity of Fourier multipliers from the von Neumann algebra level to non-commutative $L^p({\mathcal L}G)$-spaces. In particular, we show that continuous characters $\phi$ on $G$ give rise to positive onto isometric Fourier multipliers on all $L^p(\mathcal{L}G)$ (Corollary~\ref{4Easy}).

For any $1\leq p\leq \infty$, we let $L^p({\mathcal L}G;\varphi_0)$ denote
the Haagerup non-commutative $L^p$-space associated with $\varphi_0$,
see \cite{H, Terp0, HJX} or \cite[Chapter 9]{Hiai}. 
We recall that $L^p({\mathcal L}G;\varphi_0)$ consists of closed, densely defined
operators acting on $L^2(\Rdb;L^2(G))$. We let $\pi\colon
{\mathcal L}G\to B(L^2(\Rdb;L^2(G)))$ be the $*$-representation
such that $\pi({\mathcal L}G)=L^\infty({\mathcal L}G;\varphi_0)$.

Let $D$ be the so-called
density operator of $\varphi_0$ and recall $\M$ from (\ref{3M}).
Then as noticed in \cite[Remark 5.6]{HJX},
$$
{\mathcal L} : = D^\frac12 \pi(\M) D^\frac12 \subset L^1({\mathcal L}G;\varphi_0),
$$
and ${\mathcal L}$ is a dense subspace. This is an analogue of Lemma \ref{2Dense}, part  (2).

Let $T\colon {\mathcal L}G\to {\mathcal L}G$ be a positive map such that 
\begin{equation}\label{4L1}
\varphi_0(T(x))\leq\varphi_0(x),\qquad x\in {\mathcal L}G^+.
\end{equation}
Then $T$ maps $\M$ into $\M$. One may therefore define
$T^1\colon {\mathcal L}\to{\mathcal L}$ by 
setting 
$$
T^1\bigl(D^\frac12 \pi(x) D^\frac12) = D^\frac12 \pi\bigl(T(x)\bigr) D^\frac12 ,\qquad x\in \M.
$$
The Haagerup-Junge-Xu extension theorem
from \cite[Theorem 5.1 and Remark 5.6]{HJX} asserts that 
$T^1$ uniquely extends to a 
bounded operator 
$$
T^1\colon L^1({\mathcal L}G;\varphi_0)\longrightarrow L^1({\mathcal L}G;\varphi_0),
$$
with $\norm{T^1}\leq 1$.
This result can be translated into the framework considered in Section \ref{3}, as follows.
In the next statement, we rely on Lemma \ref{2Inc}, part (2).

\begin{theorem}\label{4Ext}
Let $T\colon {\mathcal L}G\to {\mathcal L}G$ be a positive map satisfying (\ref{4L1}).
Then the mapping 
$$
\Delta^{\frac12} \M\Delta^{\frac12}\longrightarrow 
\Delta^{\frac12} \M\Delta^{\frac12},\qquad 
\Delta^{\frac12} x\Delta^{\frac12}\,\mapsto\, \Delta^{\frac12} T(x)\Delta^{\frac12},
$$
uniquely extends to a contractive map $L^1({\mathcal L}G)\to 
L^1({\mathcal L}G)$.
\end{theorem}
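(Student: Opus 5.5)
The plan is to transfer the Haagerup--Junge--Xu extension theorem from the Haagerup space $L^1({\mathcal L}G;\varphi_0)$ to the Connes--Hilsum space $L^1({\mathcal L}G)$. The transfer uses an isometric isomorphism $\kappa\colon L^1({\mathcal L}G;\varphi_0)\to L^1({\mathcal L}G)$ satisfying
$$
\kappa\bigl(D^{\frac12}\pi(x)D^{\frac12}\bigr)=\Delta^{\frac12}x\Delta^{\frac12},\qquad x\in\M.
$$
Granting such a $\kappa$, the extension of the map in the statement is $\kappa\circ T^1\circ\kappa^{-1}$. It is contractive because $\norm{T^1}\leq 1$ and $\kappa$ is isometric. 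Uniqueness holds because $\Delta^{\frac12}\M\Delta^{\frac12}=\kappa({\mathcal L})$ is dense in $L^1({\mathcal L}G)$, being the image of the dense subspace ${\mathcal L}$. (Alternatively, Lemma~\ref{2Dense}, part (2), already gives density, since $\lambda(C_c(G)\star C_c(G))\subset\M$.) By Lemma~\ref{2Inc}, part (2), the element $\Delta^{\frac12}T(x)\Delta^{\frac12}$ lies in $L^1({\mathcal L}G)$, because $T(\M)\subset\M$. So the map in the statement is well defined and coincides with $\kappa T^1\kappa^{-1}$ on $\kappa({\mathcal L})$.

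For the identification $\kappa$, both $L^1$-spaces are canonically isometric to ${\mathcal L}G_*$ as ${\mathcal L}G$-bimodules. On the Haagerup side, $\omega\mapsto h_\omega$ with ${\rm tr}(h_\omega)=\omega(1)$. On the Connes--Hilsum side, this is the map $\tau$ recalled above. I set $\kappa=\tau\circ(h\mapsto\omega_h)$, which is an isometric isomorphism, and the main point is to compute it on ${\mathcal L}$. For $x=y^*z$ with $y,z\in\N$, the standard fact (see \cite{HJX} or \cite[Chapter 9]{Hiai}) is that $D^{\frac12}\pi(y^*z)D^{\frac12}=(\pi(y)D^{\frac12})^*(\pi(z)D^{\frac12})$ corresponds to the functional
$$
a\mapsto \langle \Lambda_{\varphi_0}(az),\Lambda_{\varphi_0}(y)\rangle,
$$
where $\Lambda_{\varphi_0}$ is the GNS map. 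This is the functional usually written $\varphi_0(y^*\,\cdot\, z)$ for $a\in{\mathcal L}G$.

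On the Connes--Hilsum side I need the analogous statement: $\Delta^{\frac12}y^*z\Delta^{\frac12}=(z^*\!\cdot)$ — more precisely, $(y\Delta^{\frac12})^*(z\Delta^{\frac12})$ — equals $\tau$ of that same functional. Here I use Lemma~\ref{2Inc}, part (1), with $p=2$: the element $y\Delta^{\frac12}$ lies in $L^2({\mathcal L}G)$ for $y\in\N$. I also need that the map $y\mapsto y\Delta^{\frac12}$ from $\N$ to $L^2({\mathcal L}G)$ is the GNS map of $\varphi_0$, which follows from (\ref{2Standard}) and is the content of (\ref{2FP}) for $y=\lambda(f)$. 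Then, using the bimodule property of $\tau$ and the equality $\mathrm{Tr}(b^*a)=(a\mid b)$, I get for $a\in{\mathcal L}G$
$$
\mathrm{Tr}\bigl(a\,(y\Delta^{\frac12})^*z\Delta^{\frac12}\bigr)
=\mathrm{Tr}\bigl((y\Delta^{\frac12})^*\,az\Delta^{\frac12}\bigr)
=\bigl(az\Delta^{\frac12}\,\big|\,y\Delta^{\frac12}\bigr).
$$
The right-hand side is the same functional as on the Haagerup side, so $\kappa$ sends $D^{\frac12}\pi(x)D^{\frac12}$ to $\Delta^{\frac12}x\Delta^{\frac12}$ for $x\in\M$, by linearity. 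The first equality in this display is the trace property $\mathrm{Tr}(uv)=\mathrm{Tr}(vu)$ for $u\in L^2$ and $v\in L^2$, which is standard in the Connes--Hilsum setting.

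The main obstacle is this step: justifying rigorously that $y\mapsto y\Delta^{\frac12}$ is isometric from $(\N,\varphi_0)$ into $L^2({\mathcal L}G)$ for general $y\in\N$, and not only for $y=\lambda(f)$. This is the spatial-derivative identity $|y\Delta^{\frac12}|^2=\Delta^{\frac12}y^*y\Delta^{\frac12}=d\varphi_0(y^*\cdot y)/d\psi_0$. I would obtain it from \cite[Theorem 26]{Terp1} or \cite[Theorem 3.2]{Terp2}, where this is exactly how $L^2$-membership in Lemma~\ref{2Inc} is proved. If only the case $y=\lambda(f)$ is available, I would instead work on the dense set $\lambda(C_c(G))$, establish $\kappa$ on $\mathrm{Span}\{\lambda(g)^*\lambda(f)\}$, and extend to $\M$ by approximation in the $\varphi_0$-GNS norm.
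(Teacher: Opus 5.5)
Your reduction matches the paper's: conjugate $T^1$ by an isometric isomorphism between the Haagerup and Connes--Hilsum $L^1$-spaces that sends $D^{\frac12}\pi(x)D^{\frac12}$ to $\Delta^{\frac12}x\Delta^{\frac12}$ for $x\in\M$, and use density for uniqueness. The gap is in your verification of this mapping property. The first equality in your display,
\[
\mathrm{Tr}\bigl(a\,(y\Delta^{\frac12})^*z\Delta^{\frac12}\bigr)=\mathrm{Tr}\bigl((y\Delta^{\frac12})^*\,az\Delta^{\frac12}\bigr),
\]
is not an instance of $\mathrm{Tr}(uv)=\mathrm{Tr}(vu)$. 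It moves $a$ past $(y\Delta^{\frac12})^*$, and it is false in general, already for matrices with the usual trace. Note also that $a\mapsto\mathrm{Tr}(b^*ac)$ is not even determined by the product $b^*c$. What cyclicity, together with $\mathrm{Tr}(ah)=\mathrm{Tr}(ha)$, actually gives is
\[
\mathrm{Tr}\bigl(a\,\Delta^{\frac12}y^*z\Delta^{\frac12}\bigr)=\bigl(z\Delta^{\frac12}a\,\big\vert\,y\Delta^{\frac12}\bigr),
\]
so $a$ acts on the right. For the same reason, the ``standard fact'' you quote on the Haagerup side is wrong. The functional $\varphi_0(y^*\,\cdotp z)$ corresponds to $(\pi(z)D^{\frac12})(\pi(y)D^{\frac12})^*$. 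By contrast, $D^{\frac12}\pi(y^*z)D^{\frac12}$ gives $a\mapsto\varphi_0\bigl(y^*z\,\sigma_{-i/2}(a)\bigr)$ for analytic $a$. Even for unimodular $G$, where $\Delta=1$ and Tr is the Plancherel trace, your claim would say that $y^*z$ induces $\varphi_0(y^*\,\cdotp z)$ rather than $\varphi_0(\,\cdotp y^*z)$.

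Consequently, the point you single out as the main obstacle is not the real one. Knowing that $y\mapsto y\Delta^{\frac12}$ and $y\mapsto\pi(y)D^{\frac12}$ both realize the GNS map of $\varphi_0$ only controls the left action. The correct functionals involve the right action of ${\mathcal L}G$ on $L^2$, that is, the modular structure. To repair your route, you would need $z\Delta^{\frac12}a=z\sigma_{-i/2}(a)\Delta^{\frac12}$ and $\pi(z)D^{\frac12}\pi(a)=\pi(z\sigma_{-i/2}(a))D^{\frac12}$ for $a\in{\mathcal L}G_a$. These follow from $\sigma_t(x)=\Delta^{it}x\Delta^{-it}$ and $\pi(\sigma_t(x))=D^{it}\pi(x)D^{-it}$, with care about strong products and about $z\sigma_{-i/2}(a)\in\N$. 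You would then conclude that both functionals equal $\varphi_0(y^*z\sigma_{-i/2}(a))$ on ${\mathcal L}G_a$, and finish by $w^*$-density and normality.

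The paper avoids all of this by using the concrete isometric isomorphism $W(a)=U_0^*(d\otimes a)U_0$ of \cite[Lemma 11.32]{Hiai}. Here $U_0$ is a unitary with $U_0^*(1\otimes x)U_0=\pi(x)$ and $U_0^*(d\otimes\Delta)U_0=D$. Factoring $d\otimes\Delta^{\frac12}x\Delta^{\frac12}=(d^{\frac12}\otimes\Delta^{\frac12})(1\otimes x)(d^{\frac12}\otimes\Delta^{\frac12})$ then gives $W(\Delta^{\frac12}x\Delta^{\frac12})=D^{\frac12}\pi(x)D^{\frac12}$ directly. No modular theory is needed beyond what is packaged in those two identities.
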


\begin{proof}
We use the natural unitary identification between $L^2(\Rdb;L^2(G))$
and the Hilbertian tensor product $L^2(\Rdb)\overset{2}{\otimes} L^2(G)$.
We will use tensor products of unbounded operators acting on 
$L^2(\Rdb)$ and $L^2(G)$, respectively, for which we refer to \cite[Remark 11.33]{Hiai}.
In this context, we have $D=d\otimes 1$, where $d$  is the analytic generator of the unitary group of
translation operators on $L^2(\Rdb)$. Apply \cite[pp. 204-206]{Hiai} to the left Plancherel weight $\varphi_0$ on $M={\mathcal L}G$ and to
$\psi=\psi_0$, the right Plancherel weight on $M'={\mathcal R}G$.
By (\ref{2Standard}), the operator 
$d_0$ defined on the first line of \cite[p. 204]{Hiai} 
coincides with $\Delta$. 
Let $U_0\colon L^2(\Rdb;L^2(G))\to L^2(\Rdb;L^2(G))$ be the unitary operator
considered in \cite[Lemma 11.30]{Hiai}. Then the aforementioned  lemma yields
\begin{equation}\label{4Hiai1}
U_0^*(1\otimes x)U_0=\pi(x),\qquad x\in {\mathcal L}G,
\end{equation}
and 
\begin{equation}\label{4Hiai2}
U_0^*(d\otimes\Delta)U_0 =D.
\end{equation}
By \cite[Lemma 11.32]{Hiai}, there exists an isometric isomorphism  
$$
W\colon L^1({\mathcal L}G)\longrightarrow 
L^1({\mathcal L}G;\varphi_0),\qquad W(a)= U_0^*(d\otimes a)U_0.
$$
 
For any $x\in\M$, we have
\begin{align*}
U_0^*\bigl(d\otimes \Delta^{\frac{1}{2}} x\Delta^{\frac{1}{2}}\bigr)U_0
& = 
U_0^*\Bigl(\bigl(d^{\frac{1}{2}}\otimes \Delta^{\frac{1}{2}}\bigr)(1\otimes x)
\bigl(d^{\frac{1}{2}}\otimes \Delta^{\frac{1}{2}}\bigr)\Bigr)U_0\\
& = 
U_0^*\bigl(d^{\frac{1}{2}}\otimes \Delta^{\frac{1}{2}}\bigr)U_0\cdotp U_0^*
(1\otimes x)U_0\cdotp U_0^*\bigl(d^{\frac{1}{2}}\otimes \Delta^{\frac{1}{2}}\bigr)U_0\\
& = 
\bigl(U_0^*(d\otimes \Delta)U_0\bigr)^{\frac{1}{2}} \cdotp U_0^*
(1\otimes x)U_0\cdotp \bigl(U_0^*(d\otimes \Delta)U_0\bigr)^{\frac{1}{2}}.
\end{align*}
Using (\ref{4Hiai1}) and (\ref{4Hiai2}), we deduce that 
$$
W\bigl(\Delta^\frac12 x\Delta^\frac12\bigr)  =
D^\frac12 x D^\frac12,\qquad x\in\M.
$$
Therefore, the contraction $W^{-1}T^1W$ maps $\Delta^\frac12 x\Delta^\frac12$
to $\Delta^\frac12 T(x)\Delta^\frac12$ for all $x\in\M$.
This proves the result.
\end{proof}

We refer to \cite{JX, LMZ2} for more information on the  Haagerup-Junge-Xu extension theorem.

For any $\phi\in C_b(G)$, let 
$M_\phi\colon\lambda(G)\to\lambda(G)$ be the linear map taking 
$\lambda(t)$  to $\phi(t)\lambda(t)$ for all $t\in G$. Following \cite{DCH}, we
say that $\phi$ is a bounded Fourier multiplier on ${\mathcal L}G$ if
there exists a (necessarily unique) $w^*$-continuous map ${\mathcal L}G\to {\mathcal L}G$
extending $M_\phi$.  In this case, we still denote this extension by
$M_\phi\colon{\mathcal L}G\to {\mathcal L}G$.

In the sequel, we let $e$ denote the unit of $G$. 

\begin{corollary}\label{4ExtFourier}
Let $\phi\in C_b(G)$ such that $\phi$ is a bounded Fourier multiplier on ${\mathcal L}G$
and $M_\phi$ is a positive map.
Then for any $1\leq p<\infty$, $\phi$ is a bounded Fourier multiplier on $L^p({\mathcal L}G)$, and we have
\begin{equation}\label{4Inter}
\norm{M_{\phi,p}\colon L^p({\mathcal L}G)\longrightarrow L^p({\mathcal L}G)}\leq \phi(e).
\end{equation}
\end{corollary}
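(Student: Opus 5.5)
We will obtain the corollary by interpolating between the endpoint cases $p=1$ and $p=\infty$, using Theorem \ref{4Ext} for $p=1$. Since $M_\phi$ is positive and $w^*$-continuous on ${\mathcal L}G$, it is bounded with norm $\norm{M_\phi(1)}=\norm{M_\phi(\lambda(e))}=\phi(e)$, because $\lambda(e)=1$. So $\norm{M_\phi}\le\phi(e)$ at $p=\infty$. For $p=1$ we apply Theorem \ref{4Ext} to $T=\phi(e)^{-1}M_\phi$; if $\phi(e)=0$ then $M_\phi=0$, hence $\phi=0$ and the statement is trivial.

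\textbf{Verifying (\ref{4L1}).} This is the main obstacle. We need $\varphi_0(M_\phi(x))\le\phi(e)\varphi_0(x)$ for $x\in{\mathcal L}G^+$. First take $x=\lambda(f)^*\lambda(f)=\lambda(f^*\star f)$ with $f\in C_c(G)$. Then $M_\phi(x)=\lambda(\phi\cdot(f^*\star f))$, which follows from $w^*$-continuity and $\lambda(g)=\int g(t)\lambda(t)\,dt$. The Plancherel weight satisfies $\varphi_0(\lambda(g))=g(e)$ for such $g$ whenever $\lambda(g)\ge0$ (Haagerup's formula for positive-definite continuous compactly supported $g$). Since $\lambda(\phi(f^*\star f))\ge0$ by positivity of $M_\phi$, and $\phi(f^*\star f)$ is continuous with compact support, we get
\[
\varphi_0\bigl(M_\phi(x)\bigr)=\phi(e)\,(f^*\star f)(e)=\phi(e)\,\varphi_0(x).
\]
To pass to general $x\ge0$, we use that $\varphi_0$ is the supremum (via Haagerup's theorem) of normal functionals, or equivalently lower semicontinuity. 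Every $x\in{\mathcal L}G^+$ with $\varphi_0(x)<\infty$ can be approximated in a monotone or $\sigma$-strong fashion by elements of the form above; a standard approach uses the left Hilbert algebra $C_c(G)$, where $\varphi_0(x)=\sup\varphi_0(y^*xy)$-type approximations hold. Normality of $M_\phi$ and lower semicontinuity of $\varphi_0$ then give the inequality on all of ${\mathcal L}G^+$. This gives $\norm{T^1}\le1$ on $L^1({\mathcal L}G)$.

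\textbf{Identifying the $L^1$ extension with $M_{\phi,1}$.} For $f\in\H\star\H$ we have $\lambda(f)\in\M$ (write $f$ as a combination of $g^*\star h$ with $g,h\in\H$, and use $\lambda(\H)\subset\N$). Also $M_\phi(\lambda(f))=\lambda(\phi f)$. Hence Theorem \ref{4Ext} gives
\[
\norm{\Delta^{\frac12}\lambda(\phi f)\Delta^{\frac12}}_1\le\phi(e)\norm{\Delta^{\frac12}\lambda(f)\Delta^{\frac12}}_1,
\]
which is Definition \ref{3BFM} with $p=1$.

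\textbf{Intermediate $p$.} For $1<p<\infty$ we interpolate. The Connes--Hilsum spaces form a complex interpolation scale via the Kosaki/Terp symmetric embedding $a\mapsto\Delta^{\frac1{2p}}a\Delta^{\frac1{2p}}$, with $L^p=[{\mathcal L}G,L^1]_{1/p}$. Under this embedding, $M_\phi$ at level $\infty$ and $M_{\phi,1}$ at level $1$ are compatible: both send $\lambda(f)$, equivalently $\Delta^{\frac12}\lambda(f)\Delta^{\frac12}$, to the $\phi f$ version. We can check this directly on $\H\star\H$ via the analytic family $z\mapsto\Delta^{z/2}\lambda(f)\Delta^{z/2}$, using (\ref{3Comm}). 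Stein interpolation, or the three-lines lemma applied to $\mathrm{Tr}(\Delta^{\frac z2}\lambda(\phi f)\Delta^{\frac z2}b(z))$, then yields (\ref{4Inter}) on ${\mathcal B}_p$, which is dense by Lemma \ref{2Dense}.

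Alternatively, we can bypass interpolation by invoking the $L^p$ version of the Haagerup--Junge--Xu theorem (\cite[Theorem 5.1]{HJX} holds for all $p$) and transporting it as in Theorem \ref{4Ext} with $\Delta^{\frac1{2p}}$ and $D^{\frac1{2p}}$ in place of the square roots. Here the map $W$ is replaced by its $L^p$ analogue from \cite[Lemma 11.32]{Hiai}, and this alternative is the route we would actually write up.
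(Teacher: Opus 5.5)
Your architecture matches the paper's. You verify (\ref{4L1}) for $\phi(e)^{-1}M_\phi$, apply Theorem~\ref{4Ext} at $p=1$, and then interpolate, either via Terp's theorem or via the $L^p$ form of the Haagerup--Junge--Xu theorem. Your identification of the resulting map with $M_{\phi,1}$ on $\H\star\H$ is correct.

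\textbf{The gap.} It is in the step you yourself flag as the main obstacle: passing from $\varphi_0(M_\phi(x))=\phi(e)\varphi_0(x)$ for $x=\lambda(f^*\star f)$, $f\in C_c(G)$, to all $x\in{\mathcal L}G^+$. The argument you give does not work.
\begin{itemize}
\item Nothing guarantees that a general $x\in{\mathcal L}G^+$ is an increasing limit of such elements, so the monotone route is unavailable.
\item For a net $x_\alpha\to x$ of such elements converging only $\sigma$-strongly, normality of $M_\phi$ and lower semicontinuity of $\varphi_0$ give just
\[
\varphi_0\bigl(M_\phi(x)\bigr)\le\liminf_\alpha\varphi_0\bigl(M_\phi(x_\alpha)\bigr)=\phi(e)\liminf_\alpha\varphi_0(x_\alpha).
\]
Lower semicontinuity applied to $x_\alpha\to x$ gives $\varphi_0(x)\le\liminf_\alpha\varphi_0(x_\alpha)$, which is the wrong direction. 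You need approximants with $\varphi_0(x_\alpha)\to\varphi_0(x)$. That is a genuine density statement, and semicontinuity cannot supply it.
\item The hint ``$\varphi_0(x)=\sup\varphi_0(y^*xy)$'' does not help either. For general $x$, the element $y^*xy$ is not of the form $\lambda(g)$ with $g\in C_c(G)$, so you cannot evaluate its weight as $g(e)$.
\end{itemize}

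\textbf{How the paper closes it.} The paper uses the Pedersen--Takesaki theorem \cite[Theorem 6.2]{Stra}.
\begin{itemize}
\item The normal weight $\varphi_0\circ M_\phi$ is $\sigma^{\varphi_0}$-invariant. Indeed, $\sigma_t(\lambda(s))=\Delta(s)^{it}\lambda(s)$ gives $M_\phi\circ\sigma_t=\sigma_t\circ M_\phi$.
\item It agrees with $\phi(e)\varphi_0$ on positive elements of the $\sigma$-invariant, $w^*$-dense $*$-subalgebra $\lambda(C_c(G)\star C_c(G))\subset\M$.
\item Hence $\varphi_0\circ M_\phi=\phi(e)\varphi_0$ on all of ${\mathcal L}G^+$.
\end{itemize}
This modular invariance is the ingredient your sketch never uses.

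\textbf{Repairing your own route.} If you prefer to keep the semicontinuity argument, which only needs the inequality, you must prove or cite that $\lambda(C_c(G))$ is a core for the GNS map $\Lambda_{\varphi_0}$. That is, every $y\in\N$ is the $\sigma$-strong$^*$ limit of a net $\lambda(g_\alpha)$, $g_\alpha\in C_c(G)$, with $g_\alpha\to\Lambda_{\varphi_0}(y)$ in $L^2(G)$. Your argument then works:
\begin{itemize}
\item If $\varphi_0(x)=\infty$ there is nothing to prove.
\item Otherwise write $x=y^*y$ with $y=x^{1/2}\in\N$, and take $x_\alpha=\lambda(g_\alpha)^*\lambda(g_\alpha)\to y^*y$.
\item Then $\varphi_0(x_\alpha)=\norm{g_\alpha}_2^2\to\varphi_0(y^*y)$, which is exactly the convergence semicontinuity could not provide.
\end{itemize}
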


\begin{proof}
The positivity assumption ensures that $\phi(e)\geq 0$ and 
$$
\bignorm{M_\phi\colon {\mathcal L}G\longrightarrow {\mathcal L}G} =\norm{M_\phi(1)}=
\phi(e).
$$
For any $f\in C_c(G)$ such that $\lambda(f)\geq 0$, we have
$\varphi_0(\lambda(f))= f(e)$, see e.g. \cite[§18.17]{Stra}.
Hence
$\varphi_0\bigl(M_\phi(\lambda(f))\bigr)= \phi(e) f(e)$ for any such $f$.
According to the Pedersen-Takesaki theorem \cite[Theorem 6.2]{Stra}, this implies that 
$
\varphi_0\circ M_\phi  =\phi(e)\varphi_0.
$
Hence by Theorem \ref{4Ext}, $\phi$ is a bounded Fourier multiplier on $L^1({\mathcal L}G)$, with
$$
\bignorm{M_{\phi,1}\colon L^1({\mathcal L}G)\longrightarrow L^1({\mathcal L}G)}\leq\phi(e).
$$
Once this result is settled, the boundedness on $L^p({\mathcal L}G)$ and the estimate (\ref{4Inter})
follow from Terp's interpolation theorem \cite[Section 2]{Terp1}.
\end{proof}

\begin{corollary}\label{4Easy}
Let $\phi$ be a continuous character on $G$.
For any $1\leq p<\infty$, $\phi$ is a bounded  Fourier multiplier on $L^p({\mathcal L}G)$
and $M_{\phi,p}$ is a positive onto isometry on $L^p({\mathcal L}G)$.
\end{corollary}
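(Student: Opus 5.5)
The plan is to apply Corollary~\ref{4ExtFourier} to both $\phi$ and $\overline{\phi}$, and then show that the resulting multipliers are mutually inverse contractions.

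\emph{Step 1: $M_\phi$ is a positive normal $*$-automorphism with $\phi(e)=1$.} A continuous character $\phi\colon G\to\mathbb{T}$ is bounded and continuous, and $\phi(e)=1$. Consider the unitary $V$ on $L^2(G)$ of multiplication by $\phi$. For $h\in L^2(G)$ we have
\[
[V\lambda(t)V^*h](s)=\phi(s)\overline{\phi(t^{-1}s)}h(t^{-1}s)=\phi(t)[\lambda(t)h](s),
\]
since $\phi$ is multiplicative and unimodular. Hence $x\mapsto VxV^*$ is a $w^*$-continuous $*$-automorphism of ${\mathcal L}G$ sending $\lambda(t)$ to $\phi(t)\lambda(t)$. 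So $\phi$ is a bounded Fourier multiplier on ${\mathcal L}G$, and $M_\phi(x)=VxV^*$ is positive. The same argument applies to the continuous character $\overline{\phi}=\phi^{-1}$.

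\emph{Step 2: contractivity and inversion.} Corollary~\ref{4ExtFourier}, applied with $\phi(e)=1$, shows that for each $1\le p<\infty$ both $M_{\phi,p}$ and $M_{\overline\phi,p}$ are bounded with norm at most $1$. On the dense subspace ${\mathcal B}_p$ we have
\[
M_{\overline\phi,p}M_{\phi,p}\bigl(\Delta^{\frac1{2p}}\lambda(f)\Delta^{\frac1{2p}}\bigr)=\Delta^{\frac1{2p}}\lambda(f)\Delta^{\frac1{2p}},
\]
using $\phi f\in\H\star\H$ (or arguing directly, since $\overline\phi\phi=1$). Some care is needed here, because Definition~\ref{3BFM} only specifies the action on $\H\star\H$. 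I will check that multiplication by a character preserves $\H\star\H$: indeed
\[
\phi\,(f\star g)=(\phi f)\star(\phi g)
\]
because $\phi(t)\phi(t^{-1}s)=\phi(s)$. Therefore both compositions equal the identity on ${\mathcal B}_p$, and by density and continuity they equal the identity on $L^p({\mathcal L}G)$. A contraction whose inverse is also a contraction is an onto isometry.

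\emph{Step 3: positivity.} The positivity of $M_{\phi,p}$ on $L^p({\mathcal L}G)$ is the step I expect to need the most care, since Corollary~\ref{4ExtFourier} only asserts boundedness. I plan to obtain it directly from the implementation by $V$. Because $V$ is multiplication by a function, it commutes with $\Delta^{\frac1{2p}}$. Hence
\[
\Delta^{\frac1{2p}}\lambda(\phi f)\Delta^{\frac1{2p}}=V\bigl(\Delta^{\frac1{2p}}\lambda(f)\Delta^{\frac1{2p}}\bigr)V^*,
\]
where $\lambda(\phi f)=V\lambda(f)V^*$ follows by integrating Step~1. So $M_{\phi,p}(a)=VaV^*$ on ${\mathcal B}_p$. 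This formula should persist on all of $L^p({\mathcal L}G)$: conjugation by $V$ normalizes ${\mathcal L}G$ and fixes $\Delta$, so it preserves $A_{1/p}$ and, I expect, $L^p({\mathcal L}G)$ isometrically. Alternatively, one can use that convergence in $L^p$ implies convergence in the measure topology on $A_{1/p}$. Spatial conjugation by a unitary clearly preserves positivity of closed operators, so $M_{\phi,p}$ is positive.

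If extending the formula $M_{\phi,p}(a)=VaV^*$ beyond ${\mathcal B}_p$ proves delicate, I will fall back on a density argument. Every positive element of $L^p({\mathcal L}G)$ is an $L^p$-limit of elements $\Delta^{\frac1{2p}}x^*x\Delta^{\frac1{2p}}$, and these are mapped to positive elements. Since the positive cone is closed, positivity of $M_{\phi,p}$ then follows.
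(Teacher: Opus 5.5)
Your proof is correct, and its skeleton matches the paper's. Corollary~\ref{4ExtFourier}, applied to $\phi$ and $\overline{\phi}$, gives two contractions $M_{\phi,p}$ and $M_{\overline\phi,p}$, and these are mutually inverse. Your identity $\phi\,(f\star g)=(\phi f)\star(\phi g)$ spells out a step the paper asserts without comment.

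You depart from the paper in two places.

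\emph{The multiplier on ${\mathcal L}G$.} The paper cites \cite[Proposition 4.2]{DCH}, using positive definiteness of $\phi$. You instead exhibit $M_\phi=V\,\cdot\,V^*$ directly, which is more elementary.

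\emph{Positivity on $L^p({\mathcal L}G)$.} The paper appeals to the ``symmetric nature of the definition''. That is exactly your fallback. The density claim behind it holds for the following reason. Any $b\in L^p({\mathcal L}G)^+$ has $b^{1/2}\in L^{2p}({\mathcal L}G)$. By Lemma~\ref{2Dense}(1) with exponent $2p\geq 2$, $b^{1/2}$ is an $L^{2p}$-limit of elements $\lambda(g_n)\Delta^{\frac{1}{2p}}$ with $g_n\in C_c(G)$. Since the product $L^{2p}\times L^{2p}\to L^p$ is contractive, $\Delta^{\frac{1}{2p}}\lambda(g_n)^*\lambda(g_n)\Delta^{\frac{1}{2p}}\to b$ in $L^p$. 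Their images are positive because $(\phi g)^*=\phi g^*$, hence $(\phi g)^*\star(\phi g)=\phi\,(g^*\star g)$.

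Your primary route through $a\mapsto VaV^*$ also works, but you left its key step as ``I expect''. Moreover, ``fixes $\Delta$'' is not the operative reason. The Connes--Hilsum norm is built from spatial derivatives relative to $\psi_0$ on ${\mathcal R}G$. So what you need is that conjugation by $V$ maps ${\mathcal R}G$ onto itself and preserves $\psi_0$.
\begin{itemize}
\item The first holds because $V\rho(t)V^*=\overline{\phi(t)}\rho(t)$.
\item For the second, a direct computation gives $\mathfrak{J}V\mathfrak{J}=V$. Then (\ref{2Conj}) gives $\psi_0(V^*yV)=\varphi_0\bigl(M_{\overline\phi}(\mathfrak{J}y\mathfrak{J})\bigr)=\varphi_0(\mathfrak{J}y\mathfrak{J})=\psi_0(y)$ for $y\in{\mathcal R}G^+$. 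Here $\varphi_0\circ M_{\overline\phi}=\varphi_0$ comes from the proof of Corollary~\ref{4ExtFourier}.
\end{itemize}
Write $\varphi_V=\varphi(V^*\,\cdot\,V)$. Covariance of spatial derivatives then gives $V\frac{d\varphi}{d\psi_0}V^*=\frac{d\varphi_V}{d\psi_0}$ and $\varphi_V(1)=\varphi(1)$. Hence conjugation by $V$ is a positive, even completely positive, onto isometry of $L^p({\mathcal L}G)$ that agrees with $M_{\phi,p}$ on ${\mathcal B}_p$.

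Completed this way, your argument yields the whole corollary, boundedness and isometry included. It does not need the Haagerup--Junge--Xu extension theorem or Terp's interpolation theorem. The paper's route trades this explicit spatial computation for those two general tools, which also cover positive multipliers that are not spatially implemented.
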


\begin{proof}
Let $\phi\colon G\to \Cdb$ be a continuous character. Then $\phi$
is positive definite and $\phi(e)=1$. Hence according to \cite[Proposition 4.2]{DCH},
$\phi$ is a bounded Fourier multiplier on ${\mathcal L}G$
and $M_\phi$ is a positive, contractive  map.
Hence by Corollary \ref{4ExtFourier}, $\phi$ is a bounded Fourier multiplier on $L^p({\mathcal L}G)$
and $M_{\phi,p}$ is a contraction. 

Since $\phi^{-1}=\overline{\phi}$  is also   a continuous character,
$M_{\phi,p}$ is invertible and $M_{\phi,p}^{-1} = M_{\overline{\phi},p}$ is a contraction. Consequently,
$M_{\phi,p}$ is an onto isometry.

The positivity of $M_{\phi,p}$ directly follows from the positivity of $M_\phi$ 
and from the symmetric nature of the definition of $M_{\phi,p}$.
\end{proof}

\begin{remark}\label{4cp}
Let $L^p(M):=L^p(M,H,\psi)$ be any Connes-Hilsum non-commutative 
$L^p$-space associated with 
a von Neumann algebra $M\subset B(H)$ and a normal semifinite
faithful weight $\psi$ on $M'$. For any integer $n\geq 1$, let 
$M_n$ be the $C^*$-algebra of all $n\times n$ matrices and
consider $M_n(M)\subset B(\ell^2_n\overset{2}{\otimes} H)$ in the usual way. 
Then the commutant of $M_n(M)$ is $I_{\ell^2_n}\otimes M'$.
We may therefore consider $\psi$ as a weight on $M_n(M)'$.
This allows to define the Connes-Hilsum space 
$$
L^p(M_n(M)):=L^p \bigl(M_n(M),\ell^2_n\overset{2}{\otimes} H,\psi\bigr). 
$$
It is well known that $L^p(M_n(M))$ can be regarded as a space of $n\times n$ matrices
with entries in $L^p(M)$, which allows to write an algebraic identification 
$$
L^p(M_n(M)) = M_n\otimes L^p(M).
$$
For any $S\colon  L^p(M)\to  L^p(M)$, we set $S_n=I_{M_n}\otimes S$  and we say
that $S$ is $n$-positive if the map $S_n\colon L^p(M_n(M))\to L^p(M_n(M))$ is positive. Next, we say that
$S$ is completely positive if $S$ is $n$-positive for all $n\geq 1$.

It is not hard to check (left to the reader) that in
Corollary \ref{4ExtFourier}, if $M_\phi\colon {\mathcal L}G\to  {\mathcal L}G$ is completely positive, 
then for any $1\leq p<\infty$, $M_{\phi,p}$ is completely positive on $L^p({\mathcal L}G)$.

Now coming back to Corollary \ref{4Easy}, we know from  \cite[Proposition 4.2]{DCH}
that if $\phi$ is a continuous character on $G$, then $M_\phi\colon {\mathcal L}G\to  {\mathcal L}G$ is completely positive.
Therefore, $M_{\phi,p}$ is completely positive for all $1\leq p<\infty$.
\end{remark}

We now turn to another application of the Haagerup-Junge-Xu theorem to Fourier multipliers.

\begin{proposition}\label{4Antipode}
Let $\phi\in L^\infty(G)$ and let $1\leq p<\infty$. If $\phi$ is a bounded Fourier multiplier on
$L^p({\mathcal L}G)$, then $\check{\phi}$ is also a bounded Fourier multiplier, with
$\norm{M_{\check{\phi},p}}= \norm{M_{\phi,p}}$.
\end{proposition}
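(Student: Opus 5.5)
The plan is to realize $\check\phi$ as a conjugate of $\phi$ by a \emph{linear} isometric bijection $\Phi$ of $L^p({\mathcal L}G)$ that is compatible with the dense subspace ${\mathcal B}_p$. We will build $\Phi$ from two conjugate-linear isometries: the adjoint $a\mapsto a^*$, and conjugation by complex conjugation on $L^2(G)$.

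Let $K\colon L^2(G)\to L^2(G)$ be the anti-unitary $Kh=\overline{h}$. Since $K$ commutes with multiplication operators, $K\Delta^{z}K=\Delta^{\overline z}$; in particular $K\Delta^{s}K=\Delta^{s}$ for real $s$. Moreover $K\lambda(t)K=\lambda(t)$ and $K\rho(t)K=\rho(t)$, hence $K\lambda(f)K=\lambda(\overline f)$ and $K\rho(f)K=\rho(\overline f)$. Thus $x\mapsto KxK$ is a conjugate-linear $*$-automorphism of ${\mathcal L}G$ and of ${\mathcal R}G$. It preserves $\psi_0$, because $\psi_0(\rho(\overline f)^*\rho(\overline f))=\norm{\overline f}_2^2=\norm{f}_2^2$ and the right Plancherel weight is determined by this identity (Pedersen--Takesaki, as in the proof of Corollary \ref{4ExtFourier}).

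The first key step is to transport spatial derivatives. For a normal semifinite weight $\varphi$ on ${\mathcal L}G$, set $\tilde\varphi(x)=\varphi(KxK)$. From the definition of the spatial derivative via the vectors $R^{\psi_0}(\xi)$, and the invariance $\psi_0\circ{\rm Ad}K=\psi_0$, one checks
\[
K\frac{d\varphi}{d\psi_0}K=\frac{d\tilde\varphi}{d\psi_0}.
\]
If $a=u|a|\in A_\alpha$, then $KaK=(KuK)(K|a|K)$ is again a polar decomposition, with $KuK\in{\mathcal L}G$ and $K|a|K=\bigl(\frac{d\tilde\varphi}{d\psi_0}\bigr)^\alpha$. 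Moreover $\tilde\varphi(1)=\varphi(1)$. Hence $a\mapsto KaK$ is a conjugate-linear isometric bijection of $L^p({\mathcal L}G)$. This transport identity is the step I expect to require the most care, since it involves the precise definition of $\frac{d\varphi}{d\psi_0}$ and the fact that strong sums and products commute with ${\rm Ad}K$.

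We then set
\[
\Phi(a)=Ka^*K,\qquad a\in L^p({\mathcal L}G).
\]
This is a linear, isometric, involutive bijection of $L^p({\mathcal L}G)$. For $f\in\H\star\H$ we compute
\[
\Phi\bigl(\Delta^{\frac1{2p}}\lambda(f)\Delta^{\frac1{2p}}\bigr)=\Delta^{\frac1{2p}}\lambda(\overline{f^*})\Delta^{\frac1{2p}},\qquad \overline{f^*}=\Delta^{-1}\check f=:Sf .
\]
Next we check that $S$ maps $\H\star\H$ onto itself. A change of variables gives $(g\star h)^*=h^*\star g^*$, and $g\mapsto g^*$, $g\mapsto\overline g$ preserve $\H$. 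Hence $S(g\star h)=\overline{h^*}\star\overline{g^*}$, and $S$ is an involution of $\H\star\H$. Consequently $\Phi({\mathcal B}_p)={\mathcal B}_p$.

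Finally, for $f\in\H\star\H$ we have $\check\phi\,Sf=S(\phi f)$. Therefore
\[
\Delta^{\frac1{2p}}\lambda(\check\phi f)\Delta^{\frac1{2p}}=\Phi\Bigl(\Delta^{\frac1{2p}}\lambda\bigl(\phi\, Sf\bigr)\Delta^{\frac1{2p}}\Bigr)=\Phi M_{\phi,p}\Phi\bigl(\Delta^{\frac1{2p}}\lambda(f)\Delta^{\frac1{2p}}\bigr).
\]
This shows that $\Delta^{\frac1{2p}}\lambda(\check\phi f)\Delta^{\frac1{2p}}\in L^p({\mathcal L}G)$, and that its norm is at most $\norm{M_{\phi,p}}\,\bignorm{\Delta^{\frac1{2p}}\lambda(f)\Delta^{\frac1{2p}}}_p$. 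Hence $\check\phi$ is a bounded Fourier multiplier and $M_{\check\phi,p}=\Phi M_{\phi,p}\Phi$, which gives $\norm{M_{\check\phi,p}}\le\norm{M_{\phi,p}}$. Since $\check{\check\phi}=\phi$, applying the same argument to $\check\phi$ gives the reverse inequality, and so the norms are equal.
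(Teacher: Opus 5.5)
Your proof is correct, and it takes a genuinely different route from the paper. The paper starts from the anti-$*$-automorphism $\kappa$ of ${\mathcal L}G$ with $\kappa(\lambda(t))=\lambda(t^{-1})$ and checks $\varphi_0\circ\kappa=\varphi_0$ via Pedersen--Takesaki. It then gets an involutive isometry $\kappa_p$ of $L^p({\mathcal L}G)$ from the Haagerup--Junge--Xu extension theorem (Theorem \ref{4Ext}) at $p=1$, followed by Terp's interpolation for $1<p<\infty$. Finally it conjugates, $M_{\check{\phi},p}=\kappa_p M_{\phi,p}\kappa_p$, exactly as you do.

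In fact $\kappa(x)=Kx^*K$ on ${\mathcal L}G$, so your $\Phi$ is the same operator as the paper's $\kappa_p$. The difference lies only in how you show it acts isometrically on $L^p$.

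You argue spatially. Since ${\rm Ad}K$ normalizes both ${\mathcal L}G$ and ${\mathcal R}G$ and preserves $\psi_0$, it transports spatial derivatives. The step you flagged does go through: $K$ maps the $\psi_0$-bounded vectors onto themselves, and $R^{\psi_0}(K\xi)$ equals $K R^{\psi_0}(\xi)\mathcal{K}$, where $\mathcal{K}$ is the anti-unitary $\Lambda_{\psi_0}(y)\mapsto\Lambda_{\psi_0}(KyK)$. Hence $\theta^{\psi_0}(K\xi)=K\theta^{\psi_0}(\xi)K$, and the quadratic forms of $\frac{d\tilde\varphi}{d\psi_0}$ and $K\frac{d\varphi}{d\psi_0}K$ agree on a form core.

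What your route buys:
\begin{itemize}
\item It is self-contained and treats all $1\le p<\infty$ at once, including $p=1$.
\item It gives isometry directly, rather than contractivity plus involutivity.
\item It gives an explicit formula for the isometry on all of $L^p({\mathcal L}G)$, not just on ${\mathcal B}_p$.
\item It avoids Haagerup spaces and interpolation entirely.
\end{itemize}
The price is the verification with Connes' spatial derivative. The paper avoids this by using the extension machinery of Section \ref{4} as a black box; that argument needs only a $\varphi_0$-preserving positive map and no particular spatial structure such as $K$.

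One small point to make explicit. In your last display you apply $\Phi$ to $\Delta^{\frac1{2p}}\lambda(\phi\,Sf)\Delta^{\frac1{2p}}$, where $\phi\,Sf$ lies in $\H$ but not necessarily in $\H\star\H$. This is harmless: the identity $\Phi\bigl(\Delta^{\frac1{2p}}\lambda(h)\Delta^{\frac1{2p}}\bigr)=\Delta^{\frac1{2p}}\lambda(Sh)\Delta^{\frac1{2p}}$ holds for every $h\in L^1(G)$ as an identity in $A_{\frac1p}$. Here the element in question is already known to lie in $L^p({\mathcal L}G)$.
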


\begin{proof}
We consider the anti-$*$-automorphism $\kappa\colon {\mathcal L}G\to {\mathcal L}G$ such
that $\kappa(\lambda(t))=\lambda(t^{-1})$ for all $t\in G$
(see \cite[Theorem 3.3.6]{EnockS}). 
This is a positive, involutive isometry.
For any $f\in L^1(G)$, we have
$$
\kappa(\lambda(f))=\int_G f(t)\lambda(t^{-1})\, dt\, = \int_G\Delta(t)^{-1}f(t^{-1})\lambda(t)\,dt,
$$
and hence
\begin{equation}\label{4Kappa}
\kappa(\lambda(f))=\lambda\bigl(\overline{f}^*\bigr),\qquad f\in L^1(G).
\end{equation}
Consequently,
$$
\varphi_0\circ\kappa\bigl(\lambda(f)^*\lambda(f)\bigr)
=\varphi_0\bigl(\kappa(\lambda(f))\kappa(\lambda(f^*))\bigr)
=\varphi_0\bigl(\lambda(\overline{f})^*\lambda(\overline{f})\bigr),\qquad f\in L^1(G)\cap L^2(G).
$$
Applying (\ref{2FP}), we deduce that $\varphi_0\circ\kappa$ and $\varphi_0$ coincide
on $\lambda(f)^*\lambda(f)$ for any  $f\in L^1(G)\cap L^2(G)$.  
Hence $\varphi_0\circ\kappa=\varphi_0$ by  the 
Pedersen-Takesaki theorem \cite[Theorem 6.2]{Stra}.

Therefore, we can apply  Theorem \ref{4Ext} to $\kappa$.
Combining  this extension result with Terp's interpolation theorem as in the proof of 
Corollary \ref{4ExtFourier}, and applying (\ref{4Kappa}), we deduce the existence of an involutive isometry
$\kappa_p\colon L^p({\mathcal L}G)\to L^p({\mathcal L}G)$ such that
$$
\kappa_p\bigl(\Delta^{\frac{1}{2p}} \lambda (f) \Delta^{\frac{1}{2p}}\bigr)
=\Delta^{\frac{1}{2p}}\lambda\bigl(\overline{f}^*\bigr) \Delta^{\frac{1}{2p}},\qquad
f\in\H\star \H.
$$
Assume that $\phi$ is a bounded Fourier multiplier on
$L^p({\mathcal L}G)$. A simple calculation shows that 
$$
\bigl(\kappa_p\circ M_{\phi,p}\circ\kappa_p\bigr)\bigl(\Delta^{\frac{1}{2p}} \lambda (f) \Delta^{\frac{1}{2p}}\bigr)
=\bigl(\Delta^{\frac{1}{2p}} \lambda (\check{\phi} f) \Delta^{\frac{1}{2p}}\bigr)\qquad
f\in\H\star \H.
$$
This implies that $\check{\phi}$ is a bounded Fourier multiplier and that 
$M_{\check{\phi},p}=\kappa_p\circ M_{\phi,p}\circ \kappa_p.$
\end{proof}

\begin{corollary}\label{5SelfDual}
Let $\phi\in L^\infty(G)$, let $1< p<\infty$ and let $p'$ be its conjugate number. 
Then $\phi$ is a bounded Fourier multiplier on
$L^p({\mathcal L}G)$ if and only if $\phi$ is a bounded Fourier multiplier on
$L^{p'}({\mathcal L}G)$. In this case, we
have $\norm{M_{\phi,p'}}=\norm{M_{\phi,p}}$ and $\norm{\phi}_\infty\leq \norm{M_{\phi,p}}$.
\end{corollary}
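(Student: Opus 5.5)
Combining Proposition~\ref{3Duality} with Proposition~\ref{4Antipode} gives the equivalence and the equality of norms. For the bound $\norm{\phi}_\infty\leq\norm{M_{\phi,p}}$, the plan is to interpolate between $p$ and $p'$ and reduce to Corollary~\ref{3Always}.

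\emph{Equivalence.} Assume $\phi$ is a bounded Fourier multiplier on $L^p({\mathcal L}G)$. Proposition~\ref{3Duality} shows that $\check{\phi}$ is a bounded Fourier multiplier on $L^{p'}({\mathcal L}G)$, with $M_{\check{\phi},p'}=M_{\phi,p}^*$, so that
\[
\norm{M_{\check{\phi},p'}}=\norm{M_{\phi,p}}.
\]
Applying Proposition~\ref{4Antipode} at the exponent $p'$ to $\check{\phi}$, and using $\check{\check{\phi}}=\phi$, shows that $\phi$ is a bounded Fourier multiplier on $L^{p'}({\mathcal L}G)$. The same proposition gives
\[
\norm{M_{\phi,p'}}=\norm{M_{\check{\phi},p'}}=\norm{M_{\phi,p}}.
\]
Since the roles of $p$ and $p'$ are symmetric, the converse implication follows in the same way.

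\emph{Norm estimate.} We have bounded multipliers $M_{\phi,p}$ and $M_{\phi,p'}$ with the same norm $C$, and $2$ lies between $p$ and $p'$ with $\frac12=\frac{1-\theta}{p}+\frac{\theta}{p'}$ for $\theta=\frac12$. By Terp's interpolation theorem \cite[Section 2]{Terp1}, the spaces $L^q({\mathcal L}G)$ form a complex interpolation scale. We then expect $M_{\phi,2}$ to coincide with the interpolated operator, which would give
\[
\norm{\phi}_\infty=\norm{M_{\phi,2}}\leq C^{1/2}C^{1/2}=C,
\]
where the first equality is Corollary~\ref{3Always}.

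\emph{Main obstacle.} The delicate step is checking that $M_{\phi,p}$ and $M_{\phi,p'}$ are compatible in Terp's interpolation couple, and that the interpolated map agrees with $M_{\phi,2}$. Terp realizes the couple through the embeddings $\Delta^{\frac{1}{2p}}x\Delta^{\frac{1}{2p}}\leftrightarrow x$ for $x\in\M$, which is the symmetric embedding used for the $L^1$--${\mathcal L}G$ couple. Under this identification, for $f\in\H\star\H$ the element $\Delta^{\frac{1}{2q}}\lambda(f)\Delta^{\frac{1}{2q}}$ corresponds to the same $\lambda(f)$ for every $q$. Each $M_{\phi,q}$ sends it to the element corresponding to $\lambda(\phi f)$, so the operators agree on a common dense subspace.

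To carry this out, I would first show that $\lambda(\H\star\H)$, viewed inside the intersection space, is dense in $L^q({\mathcal L}G)$ for every $q$; this follows from Lemma~\ref{2Dense}, since ${\mathcal B}_q$ is dense. I would then apply the Riesz--Thorin type bound on this subspace. If this route proves technically awkward, a fallback is Stein interpolation directly on the analytic family $z\mapsto\Delta^{z/2}\lambda(f)\Delta^{z/2}$. Another fallback avoids interpolation: apply the $L^{2p}$ multiplier bound to products, using the contractive product $L^{2p}\times L^{2p}\to L^p$ as in Lemma~\ref{2Dense}. I expect the interpolation route to be the cleanest.
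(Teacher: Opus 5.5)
Your proof is the paper's proof. Proposition~\ref{3Duality} combined with Proposition~\ref{4Antipode} gives the equivalence and $\norm{M_{\phi,p'}}=\norm{M_{\phi,p}}$, and interpolating between $p$ and $p'$ (the paper simply writes ``by interpolation'') together with Corollary~\ref{3Always} gives $\norm{\phi}_\infty\leq\norm{M_{\phi,p}}$.

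One caution if you flesh out your compatibility step. Density of $\lambda(\H\star\H)$ in each $L^q({\mathcal L}G)$ separately does not by itself justify a Riesz--Thorin bound on that subspace, and this can fail for abstract couples. What you need is that $M_{\phi,p}$ and $M_{\phi,p'}$ agree on all of $L^p({\mathcal L}G)\cap L^{p'}({\mathcal L}G)$, for instance via density of that subspace for the intersection norm $\max(\norm{\cdot}_p,\norm{\cdot}_{p'})$.
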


\begin{proof}
Assume that $\phi\in L^\infty(G)$
is a bounded Fourier multiplier on
$L^p({\mathcal L}G)$.
Combining Proposition \ref{4Antipode} and Proposition \ref{3Duality}, we obtain
that $\phi$ is a bounded Fourier multiplier on
$L^{p'}({\mathcal L}G)$, with $\norm{M_{\phi,p'}}=\norm{M_{\phi,p}}$. 
Then by interpolation,
$\norm{M_{\phi,2}}\leq \norm{M_{\phi,p}}$ and hence 
$\norm{\phi}_\infty\leq \norm{M_{\phi,p}}$
by Corollary \ref{3Always}.
\end{proof}

\begin{corollary}\label{5Bar}
Let $\phi\in L^\infty(G)$ and let $1< p<\infty$.
If $\phi$ is a bounded Fourier multiplier on
$L^p({\mathcal L}G)$, then $\overline{\phi}$ is also 
a bounded Fourier multiplier, with 
$\norm{M_{\overline{\phi},p}} = \norm{M_{\phi,p}}$.
\end{corollary}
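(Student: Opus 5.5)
The natural route is to combine the two structural facts already available: the antipode invariance of Proposition \ref{4Antipode} and the adjoint formula $M_{\phi,p}^*=M_{\check{\phi},p'}$ of Proposition \ref{3Duality}. Since complex conjugation of the symbol is not directly a multiplier operation, we realize it through the adjoint and the star operation. Concretely, the map $a\mapsto a^*$ is an isometric conjugate-linear bijection of $L^p({\mathcal L}G)$. For $f\in\H\star\H$ we have $\lambda(f)^*=\lambda(f^*)$, and $\Delta^{\frac1{2p}}$ is positive self-adjoint. Hence
$$
\bigl(\Delta^{\frac1{2p}}\lambda(f)\Delta^{\frac1{2p}}\bigr)^*=\Delta^{\frac1{2p}}\lambda(f^*)\Delta^{\frac1{2p}},
$$
where we use that the adjoint of a strong product in the $A_\alpha$ calculus is the reversed strong product of adjoints, a standard fact from \cite{Terp2}.

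Define the conjugate-linear isometry $J(a)=a^*$ and set $S=J\circ M_{\phi,p}\circ J$, which is a bounded linear operator on $L^p({\mathcal L}G)$ with $\norm{S}=\norm{M_{\phi,p}}$. We compute $S$ on ${\mathcal B}_p$. Note first that $f\mapsto f^*$ maps $\H\star\H$ onto itself, since $(g\star h)^*=h^*\star g^*$ and $\H^*=\H$. Then
$$
S\bigl(\Delta^{\frac1{2p}}\lambda(f)\Delta^{\frac1{2p}}\bigr)
=\Bigl(\Delta^{\frac1{2p}}\lambda(\phi f^*)\Delta^{\frac1{2p}}\Bigr)^*
=\Delta^{\frac1{2p}}\lambda\bigl((\phi f^*)^*\bigr)\Delta^{\frac1{2p}}.
$$
Computing the involution gives
$$
(\phi f^*)^*(t)=\Delta(t)^{-1}\overline{\phi(t^{-1})}\,\overline{f^*(t^{-1})}
=\overline{\check{\phi}(t)}\,f(t).
$$
So $S$ agrees with the multiplier of symbol $\overline{\check{\phi}}$ on the dense subspace ${\mathcal B}_p$. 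Thus $\overline{\check{\phi}}$ is a bounded Fourier multiplier with norm $\norm{M_{\phi,p}}$; in particular, the required membership of $\Delta^{\frac1{2p}}\lambda(\overline{\check\phi}f)\Delta^{\frac1{2p}}$ in $L^p$ comes for free.

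Applying Proposition \ref{4Antipode} to $\overline{\check{\phi}}$, whose check is $\overline{\phi}$, yields that $\overline{\phi}$ is a bounded Fourier multiplier with $\norm{M_{\overline{\phi},p}}=\norm{M_{\phi,p}}$. Applying the same argument to $\overline{\phi}$ gives the reverse inequality, although equality already follows since each step is an equality of norms.

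The only step I expect to need care is the identity $(\Delta^{\frac1{2p}}x\Delta^{\frac1{2p}})^*=\Delta^{\frac1{2p}}x^*\Delta^{\frac1{2p}}$ for strong products, together with the fact that $*$ is isometric on $L^p({\mathcal L}G)$; both are recalled in Section 2. Alternatively, one could avoid the adjoint of the strong product by using Proposition \ref{3Duality} and the isometric anti-linear identification of $L^{p'}$ with the dual. I would keep the direct route above, since it works uniformly in $p$. Note also that the restriction $1<p<\infty$ is not really used here; only Proposition \ref{4Antipode} is required.
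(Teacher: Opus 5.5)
Your proof is correct and follows the paper's approach: passing to adjoints in $L^p({\mathcal L}G)$, using $(\phi f)^*=\overline{\check{\phi}}\,f^*$, shows that $\overline{\check{\phi}}$ is a bounded Fourier multiplier with the same norm, and Proposition \ref{4Antipode} then turns $\overline{\check{\phi}}$ into $\overline{\phi}$. The only difference is that the paper first reduces to $p\geq 2$ via Corollary \ref{5SelfDual} in order to work on $\H$ through Remark \ref{3p-geq-2}, whereas you work directly on $\H\star\H$, which is indeed stable under $f\mapsto f^*$; this makes the reduction unnecessary and, as you note, your argument also covers $p=1$.
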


\begin{proof}
According to Corollary \ref{5SelfDual}, we may assume that
$p\geq 2$. This allows us to apply part (2) of Remark
\ref{3p-geq-2}. Let $C=\norm{M_{\phi,p}}$. Then
$$
\bignorm{\Delta^{\frac{1}{2p}}\lambda(\phi f)\Delta^{\frac{1}{2p}}}_p\leq C
\bignorm{\Delta^{\frac{1}{2p}}\lambda(f)\Delta^{\frac{1}{2p}}}_p,\qquad f\in\H.
$$
Passing to adjoints in $L^p({\mathcal L}G)$, 
this yields
\begin{equation}\label{5adj}
\bignorm{\Delta^{\frac{1}{2p}}\lambda(\phi f)^*\Delta^{\frac{1}{2p}}}_p\leq C
\bignorm{\Delta^{\frac{1}{2p}}\lambda(f)^*\Delta^{\frac{1}{2p}}}_p,\qquad f\in\H.
\end{equation}
We note that
$$
(\phi f)^*(t) = \Delta(t)^{-1}\overline{\phi(t^{-1})}
\overline{f(t^{-1})} = \overline{\phi(t^{-1})} f^*(t).
$$
Thus, we have $\lambda(\phi f)^* = \lambda\bigl(\check{\overline{\phi}}f^*\bigr)$, whereas
$\lambda(f)^* = \lambda(f^*)$. Since $\H$ is closed under taking adjoints,
(\ref{5adj}) actually shows that
$\check{\overline{\phi}}$ is a bounded 
Fourier multiplier on $L^p({\mathcal L}G)$, with norm
less than $C$. Combining with Proposition \ref{4Antipode}, 
we deduce that $\overline{\phi}$ is a bounded 
Fourier multiplier on $L^p({\mathcal L}G)$, with norm
less than $C$. The result follows.
\end{proof}

\medskip
\section{A characterization of positive isometric Fourier multipliers}\label{5}

The main objective of this section is Theorem \ref{5Positive} which provides a converse 
to Corollary \ref{4Easy} for $1<p\not=2<\infty$. The special cases $p=1$ and $p=2$ are treated in the 
next section.

We recall that for any locally measurable functions $\phi_1,\phi_2\colon G\to\Cdb$, saying that 
$\phi_1=\phi_2$ locally almost everywhere means that the set
$\{\phi_1\not=\phi_2\}$ is a locally null set. If $\phi_1,\phi_2$ are 
bounded except on a locally null set, the above is equivalent to
the equality $\phi_1=\phi_2$ in $L^\infty(G)$.

\begin{lemma}\label{5Continuous}
Let $1\leq p<\infty$ and let $\phi\in L^\infty(G)$ be a bounded Fourier 
multiplier on $L^p({\mathcal L}G)$. If  the resulting operator
$M_{\phi,p}\colon L^p({\mathcal L}G)\to L^p({\mathcal L}G)$ is positive, then
for every compact set $K\subset G$, the restriction $\phi_{\vert K}$ is almost everywhere equal 
to a continuous function $K\to\Cdb$.
\end{lemma}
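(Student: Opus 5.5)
The idea is to show that positivity of $M_{\phi,p}$ makes $\phi$ behave like a positive definite function locally, by testing against positive elements of the form $a^*a$. Local continuity then comes from the fact that such a function, integrated against $g^* \star g$, is continuous.

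First I would produce positive test elements in $L^p(\mathcal LG)$. For $h\in\H$, put $f=h^*\star h\in\H\star\H$. Then $\lambda(f)=\lambda(h)^*\lambda(h)\ge0$, and
$$
\Delta^{\frac1{2p}}\lambda(f)\Delta^{\frac1{2p}}=\bigl(\lambda(h)\Delta^{\frac1{2p}}\bigr)^*\bigl(\lambda(h)\Delta^{\frac1{2p}}\bigr)
$$
is positive in $L^p(\mathcal LG)$. By positivity of $M_{\phi,p}$, the element $\Delta^{\frac1{2p}}\lambda(\phi f)\Delta^{\frac1{2p}}$ is then positive.

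Next I would pair this with a positive element of the dual. For $p>1$, take $b=\Delta^{\frac1{2p'}}\lambda(k^*\star k)\Delta^{\frac1{2p'}}\in L^{p'}(\mathcal LG)^+$ with $k\in\H$. The trace of the product of two positive elements is $\ge0$, since $\mathrm{Tr}(ab)=\mathrm{Tr}(b^{1/2}ab^{1/2})$. For $p=1$, I would use $\mathrm{Tr}$ against $\lambda(k^*\star k)\in\mathcal LG^+$ instead. To compute these traces, I would move the powers of $\Delta$ using (\ref{3Comm}) and apply (\ref{3Trace}), exactly as in the proof of Proposition \ref{3Duality}. The expected outcome is
$$
\int_G \Delta(t)^{-\alpha}\,(k^*\star k)(t^{-1})\,\phi(t)\,(h^*\star h)(t)\,dt\ \ge 0
$$
for all $h,k\in\H$, with some explicit exponent $\alpha$. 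The obstacle here is bookkeeping: justifying $\mathrm{Tr}(ab)\ge0$ for unbounded elements of the Connes–Hilsum spaces, and pinning down the exponent.

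The cleanest way to exploit this, avoiding the $k$-pairing, is to polarize in $h$ instead. Replacing $h$ by linear combinations shows that the sesquilinear form $(h_1,h_2)\mapsto \Delta^{\frac1{2p}}\lambda(\phi\,h_2^*\star h_1)\Delta^{\frac1{2p}}$ is positive. I would then pair with positive elements approximating an identity, built from $k=|V|^{-1/2}\chi_V$ with $V$ shrinking to $e$. I expect this to show that $\psi:=\Delta^{-\beta}\phi$ (for a suitable $\beta$) satisfies
$$
\int\!\!\int \overline{h(s)}\,h(t)\,\psi(s^{-1}t)\,ds\,dt\ \ge0, \qquad h\in C_c(G),
$$
up to the modular factors. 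In other words, $\psi$ is a positive definite element of $L^\infty(G)$, since the integrals $\int \psi\,(h^*\star h)$ are exactly what appears.

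Once $\psi\in L^\infty(G)$ is a positive definite function in this integral sense, the classical theorem (Godement, Raikov; see \cite{Fo}, Section 3.3) says $\psi$ agrees locally almost everywhere with a continuous positive definite function. Then $\phi=\Delta^{\beta}\psi$ agrees locally a.e. with a continuous function, and restricting to a compact $K$ gives the statement. I would keep the restriction to compacts because the modular factors produced in the computation might only be controllable locally. I expect the main obstacle to be the positive-definiteness step: choosing the test elements and the correct power of $\Delta$ so that the modular weights factor out into a continuous multiplicative correction, rather than destroying the convolution structure $h^*\star h$.
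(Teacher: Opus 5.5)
Your construction of positive test elements and your $k$-pairing are fine. The gap is the step where you pair with positive elements ``approximating an identity'' to conclude that some $\psi=\Delta^{-\beta}\phi$ is positive definite; that target is actually wrong.

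If you carry out the bookkeeping of the $k$-pairing with (\ref{3Comm}) and (\ref{3Trace}), you get $\alpha=\frac12$ for every $p$. Since $(k^*\star k)(t^{-1})=\Delta(t)\overline{(k^*\star k)(t)}$, the inequality reads
\[
\int_G\overline{\omega_k(t)}\,\phi(t)\,(h^*\star h)(t)\,dt\geq0,
\qquad
\omega_k(t)=\Delta(t)^{\frac12}(k^*\star k)(t)=(\rho(t)k\,\vert\,k).
\]
So positivity of $M_{\phi,p}$ tells you only that $\phi\,\overline{\omega_k}$ is positive definite, where $\overline{\omega_k}$ is a coefficient of the regular representation. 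The modular factors are absorbed into the coefficient, not into $\phi$, and you cannot remove the coefficient:
\begin{itemize}
\item For $k=|V|^{-\frac12}\chi_V$ one has $|\omega_k|\leq\norm{k}_2^2=1$ and ${\rm Supp}(\omega_k)\subset V^{-1}V$. Hence the pairings tend to $0$ as $V$ shrinks (for non-discrete $G$) and carry no information.
\item To obtain $\int_G\Delta^{-\beta}\phi\,(h^*\star h)\geq0$, you would need coefficients $\overline{\omega_k}\to\Delta^{-\beta}$ locally uniformly. Since $|\omega_k|\leq\omega_k(e)\to1$, this forces $\beta=0$ and $\omega_k\to1$, i.e.\ weak containment of the trivial representation in $\lambda$. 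That is amenability of $G$.
\item For $\beta\neq0$ the claim is simply false. Take $\phi\equiv1$: then $M_{1,p}=I$ is positive, but $\Delta^{-\beta}$ is unbounded on a non-unimodular group, hence not positive definite.
\end{itemize}

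The missing idea is to keep the coefficient and exploit it; this is what the paper does. The paper obtains the key inequality without traces. It tests the positive operator $\Delta^{\frac1{2p}}\lambda(\phi(g^*\star g))\Delta^{\frac1{2p}}$ on vectors $\xi\in C_c(G)$, which lie in its domain, and moves the factors $\Delta^{\frac1{2p}}$ onto $\xi$. This gives $\lambda(\phi(g^*\star g))\geq0$ as a bounded operator on $L^2(G)$. Expanding $(\lambda(\phi(g^*\star g))\xi\,\vert\,\xi)$ then shows that $t\mapsto\phi(t)(\lambda(t)\xi\,\vert\,\xi)$ is positive definite for every $\xi\in L^2(G)$. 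Your $\overline{\omega_k}$ is also such a coefficient, so your pairing already contains this information.

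Godement's theorem is then applied to $\phi\,\omega$, not to $\phi$: $\phi\,\omega$ is locally a.e.\ equal to an element of $B(G)$. Polarization extends this to $\phi\gamma$ for every $\gamma\in A(G)$. Finally, for a compact $K$, choose $\gamma\in A(G)$ with no zero on $K$. For example, take $\gamma(t)=(\lambda(t)\chi_U\,\vert\,\chi_{KU})$ with $U$ a compact neighbourhood of $e$; it equals $|U|$ on $K$. Then $\phi_{\vert K}=(\phi\gamma)/\gamma$ a.e.\ is continuous. This division is the real reason the lemma is only local: elements of $A(G)$ vanish at infinity, whereas the modular factors cancel exactly.
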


\begin{proof}
We adapt the proof of \cite[Lemma 6.10]{AK}. 
For any arbitrary
$g\in C_c(G)$, the product $\Delta^{\frac{1}{2p}}
\lambda(g^* \star g)\Delta^{\frac{1}{2p}}=\Delta^{\frac{1}{2p}}
\lambda(g)^*\lambda(g)\Delta^{\frac{1}{2p}}$
is a positive element of $L^p({\mathcal L}G)$. 
Hence by the positivity assumption on $M_{\phi,p}$, 
$$
\Delta^{\frac{1}{2p}}\lambda\bigl(\phi(g^* \star g)\bigr)\Delta^{\frac{1}{2p}} 
\,\in L^p({\mathcal L}G)^+.
$$

Take any $\xi\in C_c(G)$. 
Then $\Delta^{\frac{1}{2p}}\xi\in C_c(G)$. Since 
$\lambda(\phi(g^* \star g))\Delta^{\frac{1}{2p}}\xi=\phi(g^* \star g)\star \Delta^{\frac{1}{2p}}\xi$,
this implies that $\lambda(\phi(g^* \star g))\Delta^{\frac{1}{2p}}\xi$ belongs 
to $C_c(G)$. Hence $\xi$ belongs to the domain 
of the operator $\Delta^{\frac{1}{2p}}
\lambda\bigl(\phi(g^* \star g)\bigr)\Delta^{\frac{1}{2p}}$.
Let $(\,\cdotp\vert\,\cdotp)$ denote the inner product on $L^2(G)$. 
Then it follows from the above that
$$
\bigl(\Delta^{\frac{1}{2p}}
\lambda\bigl(\phi(g^* \star g)\bigr)\Delta^{\frac{1}{2p}}\xi\,\vert\,\xi\bigr)\geq 0.
$$
Therefore,
$$
\bigl(\lambda(\phi(g^* \star g)) \Delta^{\frac{1}{2p}}\xi\,
\big\vert\,\Delta^{\frac{1}{2p}}\xi\bigr) \geq 0.
$$
Changing $\xi$ into $\Delta^{-\frac{1}{2p}}\xi$,
we deduce that 
\begin{equation}\label{Pospos}
\bigl(\lambda(\phi(g^*\star g)) \xi\,
\big\vert\,\xi\bigr) \geq 0,
\end{equation}
for all $\xi\in C_c(G)$. By density, this implies that (\ref{Pospos}) 
holds for all $\xi\in L^2(G)$.

For any $g\in C_c(G)$, we write
$$
(g^*\star g)(v) = \int_G \Delta(u^{-1})\overline{g(u^{-1})} g(u^{-1}v)\, du,\qquad v\in G.
$$
Then, we have 
$$
\lambda(\phi(g^* \star g)) = \int_{G^2}\Delta(u^{-1})\overline{g(u^{-1})} 
g(u^{-1}v)\phi(v)\lambda(v)\, dudv.
$$
Changing $u$ into $t^{-1}$ and then $v$ into $t^{-1}s$, we deduce that
$$
\lambda(\phi(g^* \star g)) = \int_{G^2} \overline{g(t)}g(s)
\phi(t^{-1}s) \lambda(t^{-1}s)\,dtds.
$$
Therefore,
$$
\bigl(\lambda(\phi(g^* \star g)) \xi\,
\big\vert\,\xi\bigr) = 
\int_{G^2} \overline{g(t)}g(s)
\phi(t^{-1}s) \bigl(\lambda(t^{-1}s) 
\xi\,
\vert\, \xi\bigr)  \,dtds, 
$$
for all $\xi\in L^2(G).$
This integral is non-negative, by (\ref{Pospos}). 
We deduce that for all $\xi\in L^2(G)$,  the function
$t\mapsto \phi(t)(\lambda(t)\xi\vert\xi)$ is positive definite
in the sense of \cite[Definition VII.3.20]{Tak2}. According to 
\cite[Proposition VII.3.21]{Tak2} and its proof, this implies that 
this function belongs to the Fourier-Stieltjes algebra $B(G)$, in the sense that
it is locally almost everywhere equal to an element of $B(G)$.
By polarization, for all $\xi,\xi'\in L^2(G)$, we deduce the same property for the function 
$t\mapsto \phi(t)(\lambda(t)\xi\vert\xi')$. Thus, for all $\gamma$ in the Fourier algebra $A(G) = \{\, t \mapsto ( 
\lambda(t)\,\xi\vert \xi' ) \mid \xi, \xi' \in L^2(G) \,\}$,
the product $\phi\gamma$ is locally almost everywhere 
equal to a continuous function.
Since for any compact set $K\subset G$, there exists $\gamma\in A(G)$ that does not vanish on 
$K$, the result follows.
\end{proof}

The next lemma  is a consequence of Sherman's work
concerning  isometries
on non-commutative $L^p$-spaces \cite{Sh} (see also \cite{Sh2}).

Recall that a Jordan homomorphism $J\colon M\to N$ between two
von Neumann algebras $M$ and $N$  is a bounded linear map which satisfies
\begin{equation}\label{5Jordan}
J(xy+yx)=J(x)J(y)+ J(y)J(x)
\qquad\hbox{and}\qquad
J(x^*)=J(x)^*,
\end{equation}
for all $x,y\in M$. If further $J$ is a bijection, we say that 
$J$ is a Jordan isomorphism.

It follows from \cite[Theorem 3.3]{Sto} that $J$ is a Jordan isomorphism if and only if there exist
von Neumann  algebra decompositions 
\begin{equation}\label{5Decomp1}
M=M_1\oplus M_2
\qquad\hbox{and}\qquad
N=N_1\oplus N_2, 
\end{equation}
as well as a
$*$-isomorphism $\pi\colon M_1\to N_1$ and an anti-$*$-isomorphism
$\rho\colon M_2\to N_2$, such that 
\begin{equation}\label{5Decomp2}
J=\pi\oplus\rho.
\end{equation}
This implies that $J$ is $w^*$-continuous. We refer to \cite{HOS} and \cite[Exercices 10.5.21-10.5.31]{KR} 
for more information on Jordan homomorphisms.

\begin{lemma}\label{5J}
Let $1<p\not=2<\infty$ and let $M,N$ be two von Neumann algebras. Let
$L^p(M)$ and $L^p(N)$ be any Connes-Hilsum spaces built upon $M$ and $N$, respectively.
Let $S\colon L^p(M)\to L^p(N)$ be an onto positive isometry.
Then there exists a Jordan isomorphism $J\colon M\to N$
such that 
\begin{equation}\label{5Deriv}
S(ax+xa) =S(a)J(x) + J(x)S(a),\qquad a\in L^p(M),\  x\in M.
\end{equation}  
\end{lemma}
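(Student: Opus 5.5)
We will reduce the statement to Sherman's structure theorem for surjective isometries between non-commutative $L^p$-spaces \cite{Sh}. Sherman proves that for $1\le p\ne 2<\infty$, every surjective isometry $S\colon L^p(M)\to L^p(N)$ between Haagerup spaces has the form
\[
S(\varphi^{1/p}) = u\,(\varphi\circ J^{-1})^{1/p},\qquad \varphi\in M_*^+.
\]
Here $u$ is a partial isometry in $N$ (in fact a unitary, by surjectivity), $J\colon M\to N$ is a Jordan isomorphism, and $\varphi^{1/p}$ denotes the canonical positive element of $L^p$ associated with $\varphi$. The first step is to transport this to Connes--Hilsum spaces. Any Connes--Hilsum space $L^p(M,H,\psi)$ is isometrically isomorphic to the Haagerup space, via the identification $d\varphi/d\psi\mapsto$ density of $\varphi$ used in the proof of Theorem~\ref{4Ext}. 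This isomorphism is an $M$-bimodule map, preserves positivity, and sends $(d\varphi/d\psi)^{1/p}$ to the corresponding positive element. So we may assume the Haagerup form above directly in the Connes--Hilsum setting.

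The second step uses positivity to get rid of $u$. Since $S$ maps $(d\varphi/d\psi)^{1/p}\ge 0$ to a positive element, the polar decomposition of $u\,(\varphi\circ J^{-1})^{1/p}$ shows that $u$ acts as the identity on the support projection $s(\varphi\circ J^{-1})$. As $\varphi$ ranges over $M_*^+$ and $J$ is bijective, these supports exhaust $N$; taking $\varphi$ faithful on the $\sigma$-finite pieces (or summing over an orthogonal family of supports), we get $u=1$. Hence
\[
S\bigl(h_\varphi^{1/p}\bigr)=h_{\varphi\circ J^{-1}}^{1/p}\quad\text{for all }\varphi\in M_*^+ .
\]

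The third step, which we expect to be the main obstacle, is the derivation identity (\ref{5Deriv}). Here we would use the decomposition (\ref{5Decomp1})--(\ref{5Decomp2}), $J=\pi\oplus\rho$, and treat the two summands separately.
\begin{itemize}
\item On $M_1$, the $*$-isomorphism $\pi$ induces an isometry $S_\pi$ with
\[
S_\pi(x\,h_\varphi^{1/p}\,y)=\pi(x)\,h_{\varphi\circ\pi^{-1}}^{1/p}\,\pi(y).
\]
This is the standard fact that $L^p$-spaces are functorial under $*$-isomorphisms.
\item On $M_2$, the anti-isomorphism $\rho$ induces a map with
\[
x\,h^{1/p}\,y\mapsto \rho(y)\,\rho(h)^{1/p}\,\rho(x).
\]
\end{itemize}
Both induced maps agree with $S$ on positive elements $h^{1/p}$. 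Since positive elements span $L^p$, $S$ coincides with $S_\pi\oplus S_\rho$. It then remains to check the symmetric identity $S(ax+xa)=S(a)J(x)+J(x)S(a)$, which holds in each summand: in the $\pi$ part it is multiplicativity on each side, and in the $\rho$ part the order reversal is harmless because the expression is symmetric in left and right multiplication.

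The delicate point is justifying the functoriality of $L^p$ under (anti-)isomorphisms in the Connes--Hilsum picture, in particular that the formula with $x$, $y$ is consistent and isometric. To handle this, we would pass through the Haagerup picture, where it is \cite{Sh}'s own construction. Alternatively, for $x$ unitary we could verify (\ref{5Deriv}) via $S(u h u^*)=J(u)S(h)J(u)^*$, obtained from $\varphi\mapsto\varphi(u^*\cdot u)$, then differentiate along $u=e^{itx}$ for self-adjoint $x$ at $t=0$ and extend by linearity.
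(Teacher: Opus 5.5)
Your main argument is correct and is essentially the paper's. Both transport Sherman's theorem to Connes--Hilsum spaces and use the decomposition $J=\pi\oplus\rho$ of (\ref{5Decomp2}) to get module relations on each summand. The paper reads off one-sided relations from Sherman's proof and symmetrizes them via $S(b^*)=S(b)^*$, whereas you use positivity to remove the unitary and then identify $S$ with $S_\pi\oplus S_\rho$ on the positive cone.

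Do not fall back on your closing alternative, though, because it does not work. On the anti-isomorphic summand, $\varphi\mapsto\varphi(u^*\cdot u)$ gives $S(uhu^*)=J(u)^*S(h)J(u)$ rather than $J(u)S(h)J(u)^*$, since $\rho(u^*bu)=\rho(u)\rho(b)\rho(u)^*$. Moreover, differentiating $t\mapsto S(e^{itx}he^{-itx})$ at $t=0$ only yields commutator identities for $xh-hx$, which do not give the anticommutator identity (\ref{5Deriv}).
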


\begin{proof}
Let $S\colon L^p(M)\to L^p(N)$ be an onto positive isometry.
Sherman's theorem \cite[Theorem 1.2]{Sh} asserts that there exists 
a Jordan isomorphism $J\colon M\to N$
such that 
\begin{equation}\label{5Facto}
S(a)= \bigl(J^{-1}_*(a^p)\bigr)^{\frac{1}{p}},\qquad a\in L^p(M)^+.
\end{equation}
In this statement, $J_*\colon L^1(N)\to L^1(M)$ is the pre-adjoint of $J$,
taking into account the natural isometric isomorphisms
$M_*\simeq L^1(M)$ and $N_*\simeq L^1(N)$, and 
$$
J^{-1}_*\colon L^1(M)\longrightarrow L^1(N)
$$
is its inverse. In \cite{Sh,Sh2}, this statement is given within the framework
of Haagerup non-commutative $L^p$-spaces. However using the isomorphism
between the Haagerup spaces and the Connes-Hilsum spaces (see
\cite[Chapter IV]{Terp0} or \cite[Section 11.3]{Hiai}), we see that 
Sherman's proof works as well in the Connes-Hilsum framework.

We are going to show that the mapping $J$ from (\ref{5Facto}) provides
the identity (\ref{5Deriv}).
It follows from the proof of \cite[Theorem 1.2]{Sh} that $M,N$ and $J$
have decompositions (\ref{5Decomp1}) and (\ref{5Decomp2}) such that
\begin{equation}\label{5Direct1}
S(ax) = S(a)\pi(x),\qquad a\in L^p(M),\ x\in M_1,
\end{equation}
and
\begin{equation}\label{5Direct2}
S(xa) = \rho(x)S(a),\qquad a\in L^p(M),\ x\in M_2.
\end{equation}
Since $S$ is positive, we have $S(b)=S(b^*)^*$ for all 
$b\in L^p(M)$. Hence for all $a\in L^p(M)$ and
all $x\in M_1$, we  have 
$$
S(xa) = S(a^*x^*)^* = \bigl(S(a^*)J(x^*)\bigr)^*= J(x^*)^*S(a^*)^*= J(x)S(a),
$$
by (\ref{5Direct1}). Applying once more  the latter identity, we obtain (\ref{5Deriv}) in
the case when $x\in M_1$. Now using  (\ref{5Direct2}) instead of  (\ref{5Direct1}),
we obtain  (\ref{5Deriv}) in
the case when $x\in M_2$. This establishes the identity  (\ref{5Deriv}) for all $x\in M$.
\end{proof}

\begin{theorem}\label{5Positive} 
Let $\phi\in L^\infty(G)$ and let
$1<p\not=2<\infty$. Assume that $\phi$ is a bounded Fourier multiplier on  $L^p({\mathcal L}G)$
and that the resulting map
$M_{\phi,p}\colon L^p({\mathcal L}G)\to L^p({\mathcal L}G)$ is a positive, onto isometry.
Then $\phi$ is locally almost everywhere equal to a continuous 
character.
\end{theorem}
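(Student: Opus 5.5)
Write $S=M_{\phi,p}$. Since $S$ is a positive onto isometry and $p\neq2$, Lemma~\ref{5J} provides a Jordan isomorphism $J\colon{\mathcal L}G\to{\mathcal L}G$ such that
\[
S(ax+xa)=S(a)J(x)+J(x)S(a)\qquad(a\in L^p({\mathcal L}G),\ x\in{\mathcal L}G).
\]
The plan has three steps: identify $J$ on the generators $\lambda(t)$, deduce that $\phi$ is multiplicative, and then upgrade to continuity and unimodularity.

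\textbf{Step 1: $J$ is the Fourier multiplier $M_\phi$ at the von Neumann algebra level.} Take $a=\Delta^{\frac{1}{2p}}\lambda(f)\Delta^{\frac{1}{2p}}$ with $f\in\H\star\H$, and $x=\lambda(h)$ with $h\in C_c(G)$. Using \eqref{3Comm} to move the powers of $\Delta$ past $\lambda(h)$, one checks that $ax+xa$ again lies in ${\mathcal B}_p$ (after a harmless $\Delta^{c}$-twist of $h$, handled as in Remark~\ref{3Theta}). Hence $S(ax+xa)$ is computable: it is $\Delta^{\frac1{2p}}\lambda\bigl(\phi\,(f\star h'+h''\star f)\bigr)\Delta^{\frac1{2p}}$ for the appropriate twisted functions $h',h''$.

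Taking $f$ to run over an approximate identity concentrated near $e$ and comparing both sides weakly against a dense set, I expect to obtain $J(\lambda(h))=\lambda(\phi h)$ for $h\in C_c(G)$. By $w^*$-continuity of $J$, this gives $J(\lambda(t))=\phi(t)\lambda(t)$. The intuition is that $S$ and the map ``$x\mapsto J(x)$'' must agree on the algebra part once $a$ is localized at the identity.

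\textbf{Step 2: $\phi$ is multiplicative.} Since $J$ is unital, $J(1)=1$. For each $t$ the element $\lambda(t)$ is unitary, and a Jordan isomorphism maps unitaries to unitaries, because
\[
J(u)^*J(u)+J(u)J(u)^*=J(u^*u+uu^*)=2
\]
together with the decomposition \eqref{5Decomp2}. Hence $|\phi(t)|=1$.

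Next, the Jordan identity
\[
J(\lambda(s)\lambda(t)+\lambda(t)\lambda(s))=J(\lambda(s))J(\lambda(t))+J(\lambda(t))J(\lambda(s))
\]
gives
\[
\phi(st)\lambda(st)+\phi(ts)\lambda(ts)=\phi(s)\phi(t)\bigl(\lambda(st)+\lambda(ts)\bigr).
\]
When $st\neq ts$, linear independence of $\lambda(st)$ and $\lambda(ts)$ yields $\phi(st)=\phi(s)\phi(t)$. When $st=ts$, the same identity yields $\phi(st)=\phi(s)\phi(t)$ directly.

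These relations only hold with $\phi$ understood as the function $t\mapsto J(\lambda(t))\lambda(t)^*$. That function is defined everywhere and is weakly continuous, since $J$ is $w^*$-continuous. It therefore provides a continuous representative $\chi$, which must agree with $\phi$ locally almost everywhere by Step 1. Lemma~\ref{5Continuous} can serve as a consistency check for this. Consequently $\chi$ is a continuous character.

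\textbf{Main obstacle.} The hard part is Step 1, namely rigorously extracting $J(\lambda(h))=\lambda(\phi h)$ from \eqref{5Deriv} in the presence of the unbounded weights $\Delta^{\frac1{2p}}$. My first attempt would be the following. Set $x=\lambda(h)$ and multiply \eqref{5Deriv} on both sides by elements of $L^{p'}$ of the form $\Delta^{\frac{1}{p'}}\lambda(g)$, in order to pair via \eqref{3Trace}. The resulting scalar identities can be written as integrals over $G$ involving $\phi$, $J(\lambda(h))$, $f$ and $g$. Letting $f\to\delta_e$ should then isolate $J(\lambda(h))$. If this proves awkward, a fallback is to use \eqref{5Facto} directly. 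For positive $a$ with $a^p$ corresponding to a state of the form $\omega_\xi$, one can read off $J_*$, and hence $J$, on enough states to determine it on $\lambda(G)$.
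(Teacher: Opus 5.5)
Your Step~2 (multiplicativity from the Jordan identity applied to $\lambda(s),\lambda(t)$, plus linear independence of $\lambda(st),\lambda(ts)$) is the right argument. The gap is Step~1, which carries the whole theorem: it is only announced, and its claimed outcome $J(\lambda(h))=\lambda(\phi h)$ cannot follow from what you propose to use.

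\textbf{The normalization is wrong.} The ingredients of Step~1 are \eqref{5Deriv} and the formula $S(\Delta^{\frac1{2p}}\lambda(f)\Delta^{\frac1{2p}})=\Delta^{\frac1{2p}}\lambda(\phi f)\Delta^{\frac1{2p}}$. Both are unchanged if $(S,\phi)$ is replaced by $(cS,c\phi)$ with the same $J$, for any scalar $c\neq0$. For instance, $-M_{\chi,p}$ satisfies \eqref{5Deriv} with $J=M_\chi$ when $\chi$ is a character. So localizing $a$ at $e$ can only give a homogeneous identity $c\,J(\lambda(h))=\lambda(\phi h)$, with $c=\phi(e)$ once $\phi$ is continuous. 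This is exactly the paper's identity $\phi(e)J(\lambda(s))=\phi(s)\lambda(s)$.

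Your derivation of $\vert\phi\vert=1$ and $\phi(st)=\phi(s)\phi(t)$ therefore rests on an unjustified normalization. From the correct identity you only get that $c^{-1}\phi$ is a unimodular character, after checking $c\neq0$ (otherwise $\phi=0$). You still need $\vert c\vert=1$ from the isometry, and $c=1$ from a second use of positivity. The paper does this by writing $M_{\phi,p}=\phi(e)M_{\widetilde\phi,p}$ and invoking Corollary~\ref{4Easy}.

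\textbf{The analytic mechanism is missing.} In the paper, turning \eqref{5Deriv} into such an identity takes three pieces:
\begin{itemize}
\item[(i)] Write $a$, via Remark~\ref{3Theta} and \eqref{3Comm}, as $\Delta^{\frac1p}\lambda(\Delta^{-\frac1p}f_i)$ in the term $S(a)J(x)$ and as $\lambda(f_i)\Delta^{\frac1p}$ in $J(x)S(a)$, so the unbounded factor sits at the outer edge. Then compress by $\chi_{E_n}$, with $E_n=\{n^{-1}\le\Delta\le n\}$, to get an identity in $B(L^2(G))$ where SOT limits make sense.
\item[(ii)] Control the limit of $\lambda(\phi f_i)$, which needs information on $\phi$ near $e$. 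This is where Lemma~\ref{5Continuous} is used, so it is an essential ingredient, not a consistency check. Alternatively, one can use that $\lambda(\phi f_i)-(\int_G\phi f_i)\,I\to0$ in SOT and pass to a subnet.
\item[(iii)] Deduce $X=0$ from the limit relation $AX+XA=0$ with $A=\chi_{E_n}\Delta^{\frac1p}$, using that $\sigma(A)\cap\sigma(-A)=\emptyset$ on $\chi_{E_n}L^2(G)$ (a Sylvester--Rosenblum type result).
\end{itemize}
Your suggested pairing through \eqref{3Trace} does not replace this. That formula only computes ${\rm Tr}(\Delta^{\frac12}\lambda(g)\lambda(f)\Delta^{\frac12})$ for $f,g\in\H$, whereas $J(\lambda(h))$ is an unknown element of ${\mathcal L}G$, not of the form $\lambda(g)$.

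Your choice of a fixed $x=\lambda(h)$, instead of the paper's $x=e_i\lambda(s)$, is workable. With $a$ written as in (i), the compressed left-hand side converges in norm to $\chi_{E_n}(\Delta^{\frac1p}\lambda(\phi h)+\lambda(\phi h)\Delta^{\frac1p})\chi_{E_n}$, with no continuity of $\phi$ or of $J$ along nets needed. Your w$^*$-continuity argument in Step~2 can then recover the continuity of $\phi$. But the outline only becomes a proof once (i)--(iii) and the normalization step are supplied.
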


\begin{proof}
We assume for simplicity that $G$ is $\sigma$-compact and refer to the last lines 
of this proof for the general case.
Thus, ``locally almost everywhere" just means ``almost everywhere" and by 
Lemma \ref{5Continuous}, $\phi$ is almost everywhere equal to a continuous function. 
We may therefore assume that $\phi\in C_b(G)$.

In summary, to establish the multiplicativity of $\phi$, we apply the identity~\eqref{5Deriv} to carefully chosen approximate units in ${\mathcal L}G$ and pass to a   limit, obtaining a pointwise relation that yields the desired conclusion, while also proving some auxiliary identities that simplify the computations.

For any $t\in G$ and for any $f\in C_c(G)$, we define 
$\lambda'(t)f\in C_c(G)$ by
$$
[\lambda'(t)f](s)=f(st^{-1}),\qquad s\in G.
$$
We observe that 
\begin{equation}\label{5Modular}
\Delta(t)\lambda(f)\lambda(t) = \lambda\bigl(\lambda'(t)f\bigr),
\qquad f\in C_c(G),\ t\in G.
\end{equation}
Indeed, 
$$
\lambda(\lambda'(t)f) = \int_G f(st^{-1})\lambda(s)\, ds= \Delta(t) \int_G f(s)\lambda(st)\, ds
 = \Delta(t) \Bigl(\int_G f(s)\lambda(s)\, ds\Bigr)\lambda(t).
$$

Consider a net $(f_i)_{i\in I}$ of $C_c(G)\star C_c(G)$ such that $f_i\geq 0$ and $\int_G f_i(t)\, dt=1$
for all $i\in I$,  the support of every
$f_i$ is contained in some compact neighborhood
$V_i$ of $e$, the net $(V_i)_{i\in I}$ is decreasing, and the intersection of the $V_i$ is equal to $\{e\}$. 
Then, we let $e_i=\lambda(f_i)$ for every $i\in I$.

 Let 
$C(G)$ be the space of continuous functions on $G$. We first observe
that 
\begin{equation}\label{5SOT}
\hbox{SOT-}\lim_i \lambda(hf_i) =
h(e),\qquad h\in C(G).
\end{equation}
Indeed, given any $h\in C(G)$ and $\xi\in L^2(G)$, we may write
\begin{align*}
\lambda(hf_i)\xi - h(e)\xi
&=\int_G f_i(t) \bigl(h(t)\lambda(t)\xi -h(e)\xi\bigr)\, dt\\
&=\int_G f_i(t) \bigl(h(t)-h(e)\bigr)
\lambda(t)\xi\, dt\, +h(e)\int_G
f_i(t)\bigl(\lambda(t)\xi-\xi\bigr)\, dt.
\end{align*}
This implies
$$
\bignorm{\lambda(hf_i)\xi - h(e)\xi}_2\leq \norm{\xi}_2 \int_G f_i(t) \bigl\vert h(t)-h(e)\bigr\vert\,
 dt\, +\vert h(e)\vert\int_G
f_i(t)\norm{\lambda(t)\xi-\xi}_2\, dt.
$$
These two integrals tend to $0$ when $i\to\infty$, which proves (\ref{5SOT}).
It is plain that property  (\ref{5SOT}) holds with $f_i^*$ instead of $f_i$.
Applying this with $h\equiv 1$, we obtain in particular that 
\begin{equation}\label{5ei}
\hbox{SOT-}\lim_i e_i=1
\qquad\hbox{and}\qquad
\hbox{SOT-}\lim_i e_i^*=1.
\end{equation}

Let $S=M_{\phi,p} \colon L^p({\mathcal L}G)\to L^p({\mathcal L}G)$ and let 
$J$ be the  Jordan isomorphism provided by Lemma \ref{5J}.
Let $s\in G$. We are going to apply the identity 
(\ref{5Deriv}) with 
\begin{equation}\label{xa}
x=e_i\lambda(s)
\qquad\hbox{and}\qquad
a=e_i\Delta^{\frac{1}{p}}.
\end{equation}
The latter belongs to $L^p({\mathcal L}G)$, by Remark  \ref{3Theta}.
We note that by (\ref{3Comm}) and (\ref{5Modular}), we 
can write alternatively
\begin{equation}\label{xa2}
a=\Delta^{\frac{1}{p}}\lambda(\Delta^{-\frac{1}{p}}f_i)
\qquad\hbox{and}\qquad
x=\Delta(s)^{-1}\lambda(\lambda'(s)f_i).
\end{equation}
In the following, we use either
(\ref{xa}) or (\ref{xa2})
to express $a$ and $x$.

First, we write
$$
ax= \Delta(s)^{-1}\Delta^{\frac{1}{p}}\lambda(\Delta^{-\frac{1}{p}}f_i) 
\lambda(\lambda'(s)f_i)
= \Delta(s)^{-1} \Delta^{\frac{1}{p}}
\lambda\Bigl((\Delta^{-\frac{1}{p}}f_i) \star (\lambda'(s)f_i)\Bigr).
$$
Then by Remark  \ref{3Theta} again, we have 
$$
S(ax) = \Delta(s)^{-1} \Delta^{\frac{1}{p}}
\lambda\Bigl(\phi\bigl(
(\Delta^{-\frac{1}{p}}f_i) \star (\lambda'(s)f_i)\bigr)\Bigr).
$$
Likewise,
$$
xa = \Delta(s)^{-1}\lambda\Bigl((\lambda'(s)f_i) \star 
f_i\Bigr)\Delta^{\frac{1}{p}},
$$
and hence
$$
S(xa) = \Delta(s)^{-1}\lambda\Bigl(\phi\bigl((\lambda'(s)f_i) \star 
f_i\bigr)\Bigr)\Delta^{\frac{1}{p}}
$$
We also have $S(a)=\Delta^{\frac{1}{p}}
\lambda(\phi\Delta^{-\frac{1}{p}}f_i)$, and then
$$
S(a)J(x) = \Delta^{\frac{1}{p}}
\lambda(\phi\Delta^{-\frac{1}{p}}f_i)
J(e_i\lambda(s)).
$$
Likewise,  $S(a)= \lambda(\phi f_i)\Delta^{\frac{1}{p}}$ and we have
$$
J(x)S(a) = J(e_i\lambda(s))\lambda(\phi f_i)\Delta^{\frac{1}{p}}.
$$

For any integer $n\geq 1$, we let 
$E_n=\{t\in G\, :\, n^{-1}\leq\Delta(t)\leq n\}$. We write 
(\ref{5Deriv}) using the above computations, 
and we multiply the resulting identity on the left and on the right 
by the indicator function $\chi_{E_n}$. We obtain the identity
\begin{align}\label{5in}
\Delta(s)^{-1}\Bigl[
\chi_{E_n}\Delta^{\frac{1}{p}} & 
\lambda\Bigl(\phi\bigl(
(\Delta^{-\frac{1}{p}}f_i)\star (\lambda'(s)f_i) \bigr)\Bigr)\chi_{E_n}
+\chi_{E_n} \lambda\Bigl(\phi\bigl((\lambda'(s)f_i)\star 
f_i\bigr)\Bigr)\Delta^{\frac{1}{p}}\chi_{E_n} \Bigr]\\
&= \chi_{E_n} \Delta^{\frac{1}{p}}\lambda(\phi\Delta^{-\frac{1}{p}}f_i)
J(e_i\lambda(s))\chi_{E_n} + 
\chi_{E_n} J(e_i\lambda(s))\lambda(\phi f_i)\Delta^{\frac{1}{p}}\chi_{E_n},\nonumber
\end{align}
valid for all $i\in I$ and all $n\geq 1$.
The function $\Delta^{\frac{1}{p}}\chi_{E_n}$ is bounded, and therefore
the above identity holds in $B(L^2(G))$.

We momentarily fix $n\geq 1$. We let $i\to\infty$ and unless otherwise stated,
we use SOT-convergence  in the following limiting processes. We note that by
(\ref{5ei}), we both have $e_i\lambda(s)\to\lambda(s)$ and $(e_i\lambda(s))^* 
= \lambda(s)^*e_i^*\to\lambda(s)^*$.
Since $J$ is $w^*$-continuous, it therefore follows from \cite[Lemma 2.2]{AKLZ} that  
$$
J(e_i\lambda(s))\longrightarrow J(\lambda(s)).
$$
Next, using the continuity
of $\phi$ and the fact that $\Delta(e)=1$, 
we derive from (\ref{5SOT}) that 
$$
\lambda(\phi f_i)\longrightarrow \phi(e)
\qquad\hbox{and}\qquad
\lambda(\phi 
\Delta^{-\frac{1}{p}} f_i)\longrightarrow \phi(e).
$$
Thus, we know the SOT-limit of the right hand side of (\ref{5in}).

Passing to the left hand side, we write
$$
\bigl(\lambda'(s)f_i) \star f_i\bigr)(t)  = \int_G [\lambda'(s) f_i](u) f_i(u^{-1}t) \, du
\ = \int_G f_i(us^{-1}) f_i(u^{-1}t) \, du,
$$
for any $t\in G$. We deduce
\begin{align*}
\lambda\Bigl(\phi\bigl((\lambda'(s)f_i) \star 
f_i\bigr)\Bigr) & =
\int_{G^2}\phi(t) f_i(us^{-1}) f_i(u^{-1}t) \lambda(t)\, dudt\\
& = \int_{G^2}\phi(ut) f_i(us^{-1}) f_i(t) \lambda(ut)\, dtdu\\
& = \Delta(s) \int_{G^2}\phi(ust) f_i(u) f_i(t) \lambda(ust)\, dtdu.
\end{align*}
Likewise,
$$
\phi(s)\lambda\bigl((\lambda'(s)f_i) \star 
f_i\bigr) =
\Delta(s) \int_{G^2}\phi(s) f_i(u) f_i(t) \lambda(ust)\, dtdu,
$$
hence
\begin{align*}
\Bignorm{\lambda\Bigl(\phi\bigl((\lambda'(s)f_i)\star 
f_i\bigr)\Bigr) -\phi(s)& \lambda\bigl((\lambda'(s)f_i)\star 
f_i\bigr)}_{B(L^2(G))}\\
&\leq \Delta(s)
\int_{G^2}\vert\phi(ust)-\phi(s)\vert
f_i(u)f_i(t)\, dtdu.
\end{align*}
We deduce, by the continuity of $\phi$ and the properties of the net $(f_i)_{i\in I}$, that
\begin{equation}\label{5NormCV}
\Bignorm{\lambda\Bigl(\phi\bigl((\lambda'(s)f_i)\star 
f_i\bigr)\Bigr) -\phi(s)\lambda\bigl((\lambda'(s)f_i)\star 
f_i\bigr)}_{B(L^2(G))}\longrightarrow 0,
\end{equation}
when $i\to\infty$. 
By (\ref{5Modular}), we have
$$
\lambda\bigl((\lambda'(s)f_i)\star 
f_i\bigr)  = \lambda(\lambda'(s)f_i)\lambda(f_i)
= \Delta(s) \lambda(f_i)\lambda(s)\lambda(f_i)=\Delta(s) e_i\lambda(s)e_i.
$$
Since $(e_i)_{i\in I}$ is bounded, we deduce from (\ref{5ei}) that 
$$
\lambda\bigl((\lambda'(s)f_i)\star 
f_i\bigr)  \longrightarrow \Delta(s)\lambda(s).
$$
Combining with  (\ref{5NormCV}), we obtain that
$$
\lambda\Bigl(\phi\bigl((\lambda'(s)f_i)\star 
f_i\bigr)\Bigr)\longrightarrow \Delta(s)\phi(s)\lambda(s).
$$
A similar calculation yields
$$
\lambda\Bigl(\phi\bigl((\Delta^{-\frac{1}{p}}
f_i)\star (\lambda'(s)f_i)\bigr)\Bigr)\longrightarrow \Delta(s)\phi(s)\lambda(s).
$$

Passing to the limit in (\ref{5in}), we deduce from these calculations that  
\begin{align*}
\chi_{E_n}\Delta^{\frac{1}{p}} \phi(s)\lambda(s)\chi_{E_n} &
+ \chi_{E_n}\phi(s)\lambda(s)\Delta^{\frac{1}{p}}\chi_{E_n} \\
& =\chi_{E_n}\Delta^{\frac{1}{p}}\phi(e)J(\lambda(s))\chi_{E_n}
+ \chi_{E_n}\phi(e)J(\lambda(s))
\Delta^{\frac{1}{p}}\chi_{E_n}.
\end{align*}
Let $A,X\in B(L^2(G))$ be defined by 
$$
A=\chi_{E_n}\Delta^{\frac{1}{p}}\qquad\hbox{and}\qquad 
X=\chi_{E_n}\bigl(\phi(s)\lambda(s) -\phi(e)J(\lambda(s))\bigr)\chi_{E_n}.
$$
Since 
$$
\chi_{E_n}^2=\chi_{E_n}
\qquad\hbox{and}\qquad
\chi_{E_n}\Delta^{\frac{1}{p}}=\chi_{E_n}\Delta^{\frac{1}{p}}\chi_{E_n}
=\Delta^{\frac{1}{p}}\chi_{E_n},
$$
the above identity reads
$$
AX+XA=0.
$$
According to the definition of $E_n$, $\sigma(A)\subset [n^{-\frac{1}{p}},n^\frac{1}{p}]$, 
hence
$\sigma(A)\cap \sigma(-A)=\emptyset$. It therefore follows from \cite[Theorem VII.2.1]{BR}
that $X=0$.
Thus,
$$
\chi_{E_n}\bigl(\phi(s)\lambda(s) -\phi(e)J(\lambda(s))\bigr)\chi_{E_n}=0,\qquad n\geq 1.
$$

Since $(E_n)_{n\geq 1}$ is non-decreasing and the union of the $E_n$ is equal to $G$, $\chi_{E_n}\to 1$ in the SOT.
Hence we deduce from above that
$$
\phi(e)J(\lambda(s)) = \phi(s)\lambda(s).
$$
With this identity in hands, the last part of the proof of \cite[Theorem 3.7]{AKLZ} 
shows that $\overline{\phi(e)}\phi$ is a character. Let $\widetilde{\phi} = \overline{\phi(e)}\phi$, then 
$M_{\widetilde{\phi},p}$ is a positive isometry, by Corollary \ref{4Easy}. Since 
$M_{\widetilde{\phi},p} =\overline{\phi(e)}M_{\phi,p}$ and 
$M_{\phi,p}$ is a positive isometry,  we have
$\phi(e)=1$ and therefore,
$\phi=\widetilde{\phi}$ is a character. This completes the proof in the $\sigma$-compact case.

If $G$ is not $\sigma$-compact it suffices to apply the proof of \cite[Corollary 3.13]{AKLZ}
to deduce the result from the above arguments. Details are left to the reader.
\end{proof}

We end this section with a result showing that in the case when
$p\geq 2$, the onto assumption in Theorem \ref{5Positive} is superfluous. We do not know 
if the same holds for $p<2$.

\begin{proposition}\label{5Onto}
Let $\phi\in L^\infty(G)$ and let $2\leq p<\infty$. Assume that $\phi$ is a bounded Fourier multiplier on $L^p({\mathcal L}G)$, and that the resulting operator $M_{\phi,p} \colon L^p({\mathcal L}G)\to L^p({\mathcal L}G)$ is an isometry.
Then $M_{\phi,p}$ is onto.
\end{proposition}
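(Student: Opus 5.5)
The plan is a duality argument in the spirit of the Banach-space fact that an isometry between smooth reflexive spaces is onto if and only if its adjoint is injective. Let $S=M_{\phi,p}$ with $2\le p<\infty$. Since $S$ is an isometry, its range is closed. By the Hahn--Banach theorem, $S$ is onto if and only if $S^*\colon L^{p'}({\mathcal L}G)\to L^{p'}({\mathcal L}G)$ is injective. By Proposition \ref{3Duality}, $S^*=M_{\check{\phi},p'}$. So the task is to show that this Fourier multiplier on $L^{p'}({\mathcal L}G)$ has trivial kernel.

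First I will record the nonlinear intertwining given by the duality map. For $a\in L^p({\mathcal L}G)$ with polar decomposition $a=u\vert a\vert$, set
$$
\mathcal{J}(a)=\vert a\vert^{p-1}u^*\in L^{p'}({\mathcal L}G).
$$
Then ${\rm Tr}(\mathcal{J}(a)a)=\norm{a}_p^p$ and $\norm{\mathcal{J}(a)}_{p'}=\norm{a}_p^{p-1}$. Moreover, $\mathcal{J}$ is a bijection $L^p\to L^{p'}$, and this norming functional is unique because Connes--Hilsum $L^p$-spaces are uniformly convex and uniformly smooth for $1<p<\infty$. Since $S$ is isometric and $\norm{S^*}\le 1$, the functional $S^*\mathcal{J}(Sa)$ norms $a$ with the right norm, so uniqueness gives
$$
S^*\mathcal{J}(Sa)=\mathcal{J}(a),\qquad a\in L^p({\mathcal L}G).
$$
In particular $S^*$ is onto. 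By Proposition \ref{4Antipode}, so is $M_{\phi,p'}$. By Corollary \ref{5SelfDual}, $\norm{\phi}_\infty\le 1$.

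Next I want to upgrade this to $\vert\phi\vert=1$ locally almost everywhere, which yields injectivity. If $\vert\phi\vert=1$, then $\overline{\phi}=\phi^{-1}$. By Corollary \ref{5Bar}, $\overline{\phi}$ is a bounded Fourier multiplier on $L^{p'}({\mathcal L}G)$. Applied to $\check{\phi}$ (legitimate via Proposition \ref{4Antipode} and Corollary \ref{5SelfDual}), this gives $M_{\overline{\check\phi},p'}M_{\check\phi,p'}=I$ on the dense set ${\mathcal B}_{p'}$, hence everywhere. So $S^*$ is injective, and we are done.

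The main obstacle is proving $\vert\phi\vert=1$. My first attempt is to argue at the level of $L^2$ via Corollary \ref{3Always}, testing the identity $S^*\mathcal{J}(Sa)=\mathcal{J}(a)$ on elements $a=\lambda(f)\Delta^{1/p}\in{\mathcal C}_p$ from Remark \ref{3p-geq-2}. Suppose $\vert\phi\vert\le 1-\varepsilon$ on a set $E$ of positive measure. Choose $f\in\H$ concentrated on $E$ and compare $\norm{\lambda(\phi f)\Delta^{1/p}}_p=\norm{\lambda(f)\Delta^{1/p}}_p$ with the $L^2$ norms. I would do this by interpolating the isometric lower bound against the contraction $M_{\phi,p'}$, using the reverse (three-lines) form of Riesz--Thorin along the family $M_{\phi,q}$. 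The goal is to get $\norm{\phi f}_2\ge\norm{f}_2$, which is impossible for such $f$.

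If that interpolation lower bound does not go through directly, the fallback is a support argument. Take $b=\Delta^{\frac1{2p'}}\lambda(g)\Delta^{\frac1{2p'}}$ with $g$ supported where $\check\phi=0$, so that $S^*b=0$. Then use the surjectivity relation $S^*\mathcal{J}(S(\cdot))=\mathcal{J}(\cdot)$, together with strict convexity, to show that $b$ must vanish. I expect this step, turning the isometry assumption into the pointwise statement $\vert\phi\vert=1$ in the non-tracial setting, to be the hardest part.
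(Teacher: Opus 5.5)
\textbf{There is a genuine gap.} Your preliminary steps are fine: closed range, $S^*=M_{\check\phi,p'}$, the identity $S^*\mathcal{J}(Sa)=\mathcal{J}(a)$, surjectivity of $S^*$, and $\norm{\phi}_\infty\le1$. But the proof rests entirely on $\vert\phi\vert=1$ (equivalently $\ker S^*=\{0\}$), and neither of your routes establishes it.

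\emph{The interpolation route fails.} Lower bounds do not interpolate. For instance, $Tf(x)=2^{1/p}f(2x)\chi_{[0,1/2]}(x)$ on $L^q(0,1)$ satisfies $\norm{Tf}_q^q=2^{q/p-1}\norm{f}_q^q$. So it is an isometry for $q=p$ and a contraction for $q=p'$, yet for $p>2$ it is a strict contraction on $L^2$. Any argument for $\vert\phi\vert=1$ must therefore use the multiplier structure essentially, and you give no mechanism for that.

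\emph{The fallback cannot work as stated.} On any smooth reflexive space, $S^*\mathcal{J}(Sa)$ norms $a$ for \emph{every} isometry $S$, so your identity holds for non-surjective isometries too. The unilateral shift on $\ell^p$ is an example: its adjoint is onto but not injective. So this identity plus strict convexity can never force $\ker S^*=\{0\}$. Moreover, for $p'<2$ your test elements need not lie in $L^{p'}(\mathcal{L}G)$. Definition \ref{3BFM} only guarantees membership for $g\in\H\star\H\subset C_c(G)$, and such $g$ cannot in general be supported in the possibly nowhere dense set $\{\check\phi=0\}$. Even knowing that set is null would not give $\ker M_{\check\phi,p'}=\{0\}$. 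Note also that nothing in your outline uses $p\ge2$ essentially, while the paper leaves $p<2$ open. This is a sign that the missing step carries all the content.

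\textbf{The missing idea.} You need neither duality nor $\vert\phi\vert=1$, only $\phi\neq0$ a.e., and this is proved on the $L^p$ side, where $p\ge2$ supplies test elements.
\begin{itemize}
\item By Lemma \ref{2Inc}(1) and Remark \ref{3p-geq-2}, $\lambda(h)\Delta^{\frac1p}\in L^p(\mathcal{L}G)$ and $M_{\phi,p}\bigl(\lambda(h)\Delta^{\frac1p}\bigr)=\lambda(\phi h)\Delta^{\frac1p}$ for every $h\in\H$, including discontinuous $h$.
\item Take $h=\chi_{N_K}$ with $N_K=\{t\in K:\phi(t)=0\}$. Injectivity of the isometry $M_{\phi,p}$ forces $\vert N_K\vert=0$ for every compact $K$.
\item For $f\in C_c(G)$, set $N_\delta=\{t\in{\rm Supp}(f):\vert\phi(t)\vert<\delta\}$. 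Then $M_{\phi,p}\bigl(\lambda(\phi^{-1}f\chi_{N_\delta^c})\Delta^{\frac1p}\bigr)=\lambda(f\chi_{N_\delta^c})\Delta^{\frac1p}$.
\item By the Hausdorff--Young estimate \eqref{3Inter}, this element lies within $\norm{f}_\infty\vert N_\delta\vert^{\frac1{p'}}$ of $\lambda(f)\Delta^{\frac1p}$, and $\vert N_\delta\vert\to0$ as $\delta\to0$.
\end{itemize}
Hence the closure of the range contains the dense subspace of Lemma \ref{2Dense}(1). Since the range of an isometry is closed, $M_{\phi,p}$ is onto. Both the membership of $\lambda(\chi_{N_K})\Delta^{\frac1p}$ in $L^p$ and \eqref{3Inter} are exactly where $p\ge2$ enters.
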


\begin{proof}
We adapt the proof of \cite[Lemma 3.9]{AKLZ}. We assume that
$M_{\phi,p}\colon L^p({\mathcal L}G)\to L^p({\mathcal L}G)$ is an isometry.
In particular, $M_{\phi,p}$ is injective.

Let $K\subset G$ be an arbitrary  compact 
set and let $N_K=\{t\in K\, :\, \phi(t)=0\}$.
The indicator function
$h=\chi_{N_K}$ belongs to  $L^1(G)\cap L^2(G)$, hence $\lambda(h)$ to $\N$.
By Lemma \ref{2Inc}, (1) and Remark \ref{3p-geq-2}, $\lambda(h)
\Delta^{\frac{1}{p}}$ belongs to $L^p({\mathcal L}G)$
and $M_{\phi,p}\bigl(\lambda(h)\Delta^{\frac{1}{p}}\bigr) = \lambda(\phi h)\Delta^{\frac{1}{p}}$.
Since $\phi h=0$, this means that $M_{\phi,p}\bigl(\lambda(h)\Delta^{\frac{1}{p}}\bigr)=0$.
This implies  $\lambda(h)=0$, and hence $h=0$. Thus,
\begin{equation}\label{5=0}
\vert N_K\vert =0.
\end{equation}

Let $f\in C_c(G)$ and let
$K={\rm Supp}(f)$. 
For any $\delta>0$, set
$N_\delta =\{t\in K\, :\, \vert\phi(t)\vert <\delta\}$. 
Then it follows from (\ref{5=0}) that 
\begin{equation}\label{Lim}
\lim_{\delta\to 0}\vert N_\delta\vert \,=0.
\end{equation}
We may define 
$\phi^{-1} f\chi_{N_\delta^c}$, which belongs to $L^1(G)\cap L^\infty(G)$. Hence arguing as above, 
$$
\lambda\bigl(\phi^{-1} f\chi_{N_\delta^c}\bigr)\Delta^{\frac{1}{p}}\in L^p({\mathcal L}G)
\qquad\hbox{and}\qquad
M_\phi\Bigl(\lambda\bigl(\phi^{-1} f\chi_{N_\delta^c}\bigr)\Delta^{\frac{1}{p}} \Bigr)
= \lambda\bigl(f\chi_{N_\delta^c}\bigr) 
\Delta^{\frac{1}{p}}.
$$
Consequently,
$$
M_{\phi,p}\Bigl(\lambda\bigl(\phi^{-1} f\chi_{N_\delta^c}\bigr)
\Delta^{\frac{1}{p}} \Bigr)
- \lambda(f) \Delta^{\frac{1}{p}}  = 
-\lambda\bigl(f\chi_{N_\delta}\bigr) \Delta^{\frac{1}{p}}.
$$
By (\ref{3Inter}), this implies that 
$$
\Bignorm{M_{\phi,p}\Bigl(\lambda\bigl(\phi^{-1} f\chi_{N_\delta^c}\bigr)\Delta^{\frac{1}{p}}\Bigr)
- \lambda(f) \Delta^{\frac{1}{p}}}_p \leq \norm{f\chi_{N_\delta}}_{p'}
\leq \norm{f}_\infty \vert N_\delta\vert^{\frac{1}{p'}}.
$$
According to (\ref{Lim}), this implies that 
$\lambda(f) \Delta^{\frac{1}{p}} \in \overline{{\rm Ran}(M_{\phi,p})}.$ By the density Lemma \ref{2Dense}, (1), this implies that the range of $M_{\phi,p}$ is dense.
Since $M_{\phi,p}$ is an isometry, this means that $M_{\phi,p}$ is onto.
\end{proof}

\medskip
\section{The cases $p=1$ and $p=2$}\label{6}
In this section we consider the endpoint cases $p=1$ and $p=2$. We first show that Theorem~\ref{5Positive} extends to the case $p=1$ without assuming positivity, as follows.

\begin{proposition}\label{6p1}
Let $\phi\in L^\infty(G)$ be a bounded Fourier multiplier on $L^1({\mathcal L}G)$. 
If the operator 
$M_{\phi,1}\colon L^1({\mathcal L}G)\to L^1({\mathcal L}G)$
is an onto isometry, then there exists a complex number $\delta$ with $\vert\delta\vert =1$ 
such that $\delta\phi$ is locally almost everywhere equal to a continuous 
character.
\end{proposition}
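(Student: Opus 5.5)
The plan is to pass through the predual. An onto isometry of $L^1({\mathcal L}G)\simeq {\mathcal L}G_*$ has as adjoint a $w^*$-continuous onto isometry $T=M_{\phi,1}^*\colon{\mathcal L}G\to{\mathcal L}G$. The first step is to identify $T$ as a Fourier multiplier on ${\mathcal L}G$. Using the duality (\ref{3Trace}) and arguing exactly as in the proof of Proposition \ref{3Duality} (with $p=1$, $p'=\infty$), we pair $\Delta^{\frac12}\lambda(f)\Delta^{\frac12}$, $f\in\H\star\H$, against $\lambda(g)$, $g\in\H$. This should give
\[
T(\lambda(g))=\lambda(\check{\phi}g)\qquad g\in\H,
\]
that is, $T$ is the $w^*$-continuous Fourier multiplier with symbol $\check\phi$. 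By $w^*$-density of $\lambda(C_c(G))$, $T$ is determined by this formula.

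Next we apply Kadison's theorem. A surjective isometry of a von Neumann algebra has the form $T=uJ$, where $u=T(1)$ is unitary and $J$ is a Jordan $*$-isomorphism. We compute $u=T(1)$: taking $g=f_i$ from an approximate unit as in the proof of Theorem \ref{5Positive} and using $w^*$-continuity, $T(1)=\lim T(\lambda(f_i))=\lim\lambda(\check\phi f_i)$. This requires $\check\phi$ to be continuous at $e$. We first get local continuity of $\phi$ in a way analogous to Lemma \ref{5Continuous}. Since $T$ is a multiplier, $T(\lambda(t))$ should equal $\check\phi(t)\lambda(t)$ for almost every $t$. Any element commuting appropriately with all multipliers... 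The cleaner route is to note that $u$ commutes with $\lambda(G)$ in the Jordan sense. Then $J=u^*T$ is a unital Jordan isomorphism that maps $\lambda(g)$ to $u^*\lambda(\check\phi g)$.

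The main obstacle is showing that $u$ is a scalar $\delta^{-1}$ and that $\check\phi$ (hence $\phi$) is then a.e.\ a character times $\delta^{-1}$. I would try the following. $J$ is positive and unital, so $J$ maps the $w^*$-continuous positive functionals $\omega_\xi$ to states. Hence for each $\xi$ the function $t\mapsto\bigl(u^*T(\lambda(t))\xi\mid\xi\bigr)$ is positive definite. This yields that $\check\phi\cdot(\text{coefficient of }u^*)$ is locally in $B(G)$, which gives continuity as in Lemma \ref{5Continuous}. After that, the approximate-unit computation of Theorem \ref{5Positive} gives $u=\phi(e)$, a scalar of modulus one because $u$ is unitary. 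Set $\delta=\overline{\phi(e)}$. Then $J=M_{\delta\check\phi}$ is a unital Jordan automorphism with $J(\lambda(s))=\delta\phi(s^{-1})\lambda(s)$.

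Finally we show multiplicativity. The Jordan identity applied to $x=y=\lambda(s)$ gives $J(\lambda(s)^2)=J(\lambda(s))^2$, so $\psi(s^2)=\psi(s)^2$ for $\psi=\delta\check\phi$. In general, we use the Størmer decomposition (\ref{5Decomp1})--(\ref{5Decomp2}) and argue as in the final part of the proof of \cite[Theorem 3.7]{AKLZ}, which applies verbatim since it only uses the identity $J(\lambda(s))=\psi(s)\lambda(s)$. This shows that $\psi$ is a continuous character, hence so is $\delta\phi=\check\psi^{-1}\cdot$, i.e.\ $\delta\phi$ is a character. The non-$\sigma$-compact case reduces as in \cite[Corollary 3.13]{AKLZ}.
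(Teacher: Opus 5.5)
Your proof is correct and follows the paper's skeleton, but two steps are done differently. The shared skeleton is: the adjoint $T=M_{\phi,1}^*$ is identified through the duality computation of Proposition \ref{3Duality}; Kadison's theorem gives $T=UJ$; the resulting identity $J(\lambda(s))=\delta\check\phi(s)\lambda(s)$ is then fed into the end of the proof of \cite[Theorem 3.7]{AKLZ}.

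\emph{Continuity of $\phi$.} The paper obtains continuity of $\phi$ from \cite[Proposition 1.2]{DCH}, applied to the contractive estimate $\norm{\lambda(\check\phi g)}\leq\norm{\lambda(g)}$. You argue instead via positive-definiteness of $t\mapsto(J(\lambda(t))\xi\mid\xi)$. That positive-definiteness is superfluous. Since $T$ is $w^*$-continuous, $t\mapsto\omega(T(\lambda(t)))$ is already continuous for every $\omega\in{\mathcal L}G_*$. Pairing $T(\lambda(g))=\lambda(\check\phi g)$, $g\in\H$, with $\omega$ gives $\omega(T(\lambda(t)))=\check\phi(t)\,\omega(\lambda(t))$ locally almost everywhere, one functional at a time. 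This is the precise content of your ``should equal $\check\phi(t)\lambda(t)$ for almost every $t$'', and you should prove it this way. Taking $\omega=\omega_{\xi,\eta}$ shows that $\check\phi\gamma$ is a.e.\ continuous for all $\gamma\in A(G)$. Once $\phi$ is replaced by its continuous version, the identity then holds for every $t$. Also, the coefficient you obtain is $\check\phi(t)(\lambda(t)\xi\mid u\xi)$, not a ``coefficient of $u^*$''.

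\emph{The unitary.} The paper applies the Jordan identity to $e_i\lambda(s)e_i+\lambda(s)e_i^2$ and passes to SOT-limits using \cite[Lemma 3.11]{AKLZ}. You simply note that the unitary in Kadison's theorem is $U=T(1)$ and compute it as $\check\phi(e)=\phi(e)$. This gives $J=\overline{\phi(e)}\,T$ and hence the paper's identity $\check\phi(e)J(\lambda(s))=\check\phi(s)\lambda(s)$ at once. Your route is genuinely shorter: once $T=M_{\check\phi}$ is known, $T(1)=T(\lambda(e))=\check\phi(e)$ requires no limiting argument at all. The paper's route instead reuses the approximate-unit machinery of Theorem \ref{5Positive} without ever singling out $U$.

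\emph{Minor clean-ups.} Drop the unused and unjustified remark that $u$ commutes with $\lambda(G)$ ``in the Jordan sense''. In the last line, write simply $\delta\phi=\check\psi=\overline{\psi}$, which is a character whenever $\psi$ is.
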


\begin{proof}
As in the proof of Theorem \ref{5Positive}, we may assume 
for simplicity that $G$ is $\sigma$-compact.
Let $T=M_{\phi,1}^*\colon {\mathcal L}G\to {\mathcal L}G$. The argument in the proof of 
Proposition \ref{3Duality} shows that
$T(\lambda(g))=\lambda(\check{\phi}g)$ for all $g\in\H$.
This implies that 
$$
\norm{\lambda(\check{\phi} g)}\leq \norm{\lambda(g)},\qquad g\in L^1(G).
$$
Then,  \cite[Proposition 1.2, (3)$\Rightarrow$(2)]{DCH} and its
proof show that
the function $\phi$ belongs to $C_b(G)$, that $\check{\phi}$ is a bounded
Fourier multiplier on ${\mathcal L}G$ and that $T=M_{\check{\phi}}$.

Since $M_{\phi,1}$  is an onto isometry, its adjoint
$T$ is an onto isometry as well. Hence by Kadison's non-commutative
Banach-Stone theorem from \cite{Kad}, 
there exist a Jordan 
isomorphism $J\colon {\mathcal L}G\to {\mathcal L}G$
and a unitary $U\in {\mathcal L}G$ such that 
$$
T(x) = UJ(x),\qquad x\in {\mathcal L}G.
$$

Consider an arbitrary $s\in G$.
Let $(e_i)_{i\in I}$ be the net introduced in the proof of Theorem 
\ref{5Positive}. 
It follows from \cite[Lemma 3.11]{AKLZ} (which does not use 
any unimodularity assumption) that the following SOT-convergences hold:
\begin{equation}\label{6-311}
T(\lambda(s)e_i)\to \check{\phi}(s)\lambda(s),\quad 
T(\lambda(s)e_i^2)\to \check{\phi}(s)\lambda(s)\quad\hbox{and}\quad
T\bigl(e_i\lambda(s)e_i\bigr) \to\check{\phi}(s)\lambda(s).
\end{equation}
Moreover it follows from an observation in the proof of  Theorem 
\ref{5Positive} that
\begin{equation}\label{6-J}
J(\lambda(s)e_i)\longrightarrow J(\lambda(s)).
\end{equation}

By the first half of (\ref{5Jordan}), we have 
$$
J\bigl(e_i\lambda(s)e_i + \lambda(s)e_i^2\bigr) 
=J(e_i)J\bigl(\lambda(s)e_i\bigr) + J\bigl(\lambda(s)e_i\bigr) J(e_i).
$$
After left multiplication by $U$, this means that
$$
T\bigl(e_i\lambda(s)e_i\bigr) +T\bigl(\lambda(s)e_i^2\bigr) 
=T(e_i)J\bigl(\lambda(s)e_i\bigr) + T\bigl(\lambda(s)e_i\bigr) J(e_i).
$$
Passing to the SOT-limit when $i\to \infty$ and applying (\ref{6-311}) and (\ref{6-J}), we deduce that 
$\check{\phi}(s)\lambda(s) +\check{\phi}(s)\lambda(s) =
\check{\phi}(e)J(\lambda(s)) + \check{\phi}(s)\lambda(s)$, that is,
$$
\check{\phi}(e) J(\lambda(s))=\check{\phi}(s)\lambda(s),\qquad s\in G.
$$
We can now conclude as in the proof of Theorem \ref{5Positive} that 
$\check{\phi}(e)^{-1}\check{\phi}$ is  a character and the result follows at once.
\end{proof}

\begin{remark}\label{6-Infty}
Let $\phi\in C_b(G)$ such that $\phi$
is a bounded Fourier multiplier on 
${\mathcal L}G$ and assume that $M_\phi\colon
{\mathcal L}G\to {\mathcal L}G$ is an isometry.
(See the paragraph preceding Corollary \ref{4Inter}
for definitions.) Then $\vert\phi(t)\vert = \norm{M_\phi(\lambda(t))}=1$
for all $t\in G$, hence $\lambda(G)\subset {\rm Ran}(M_\phi)$. Since ${\rm Ran}(M_\phi)$ is
$w^*$-closed, this implies that $M_\phi$ is onto. 
Then, the proof of Proposition \ref{6p1} shows as well
the existence of a complex number $\delta$
with $\vert\delta\vert =1$ such that $\delta\phi$ is a character. 
\end{remark}

We now turn to the case $p=2$.
Until Lemma \ref{6Pos}, we  consider any Connes-Hilsum
non-commutative $L^2$-space $L^2(M)$ built upon
an arbitrary von Neumann algebra $M$. In
\cite{Ju}, Junge introduced $\ell^1$-valued 
non-commutative spaces $L^p(M;\ell^1)$. Here, we consider
$L^2(M; \ell^1_2)$ obtained by restricting to the case 
$p=2$ and to the situation when $\ell^1$ is replaced by the 
2-dimensional space $\ell^1_2$. Thus, $L^2(M; \ell^1_2)$ is the
direct sum
$L^2(M)\oplus L^2(M)$ equipped with 
a specific norm. We refer e.g. to \cite{LMZ} for information on this norm
(that we will not explicitly use here).

We say that two
elements $a,b\in L^2(M)$ are disjoint if $a^*b=0$ and $ab^*=0$.
Then, we say that a bounded map $S\colon L^2(M)\to L^2(M)$ is separating 
if, for all disjoint $a$ and $b$ in $L^2(M)$, 
the images $S(a)$ and $S(b)$ are disjoint.
This terminology extends the one in  \cite[Section 3]{LMZ}
to the possibly non tracial case.

\begin{lemma}\label{6LMZ}
\begin{itemize} 
\item [(1)] Let $a,b\in L^2(M)$. Then, $a$ and $b$ are disjoint if and only if 
$$
\bignorm{(a,b)}_{L^2(M; \ell^1_2)}\leq \bigl(\norm{a}_2^2 +\norm{b}_2^2\bigr)^\frac12.
$$
\item [(2)] Let $S\colon L^2(M)\to L^2(M)$ be a 2-positive isometry. Then $S$ is separating.
\end{itemize}
\end{lemma}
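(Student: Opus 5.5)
The plan is to transfer the tracial statements of \cite[Section 3]{LMZ} to the Connes--Hilsum setting. For part (2), we will also use the fact that $L^2(M;\ell^1_2)$ norms are computed from $2\times 2$ matrix data.

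\emph{Part (1).} We recall the description of the $\ell^1_2$-valued norm from \cite{Ju}. For $a,b\in L^2(M)$,
$$
\bignorm{(a,b)}_{L^2(M;\ell^1_2)}=\inf\Bigl\{\Bignorm{\sum_{k}u_k u_k^*}_{2}^{\frac12}\Bignorm{\sum_k v_k^*v_k}_{2}^{\frac12}\Bigr\},
$$
where the infimum runs over factorizations $a=u_1v_1$ and $b=u_2v_2$ with $u_k,v_k\in L^4(M)$.

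For the ``only if'' direction, assume $a$ and $b$ are disjoint. We use the polar decompositions $a=w_a|a|$ and $b=w_b|b|$, and factor $a=(w_a|a|^{1/2}w_a^*)\cdot(w_a|a|^{1/2})$, and similarly for $b$. Disjointness means that the left supports are orthogonal and the right supports are orthogonal. Hence the cross terms vanish, and
\[
\sum u_ku_k^*=|a^*|+|b^*|, \qquad \sum v_k^*v_k=|a|+|b|,
\]
both with orthogonal summands. This gives $\norm{\sum u_ku_k^*}_2^2=\norm{a}_2^2+\norm{b}_2^2$, and the same holds for the other sum. The claimed upper bound follows.

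For the converse, we use the reverse inequality $\norm{(a,b)}\ge(\norm{a}_2^2+\norm{b}_2^2)^{1/2}$, which holds for all pairs. This comes from the Cauchy--Schwarz and Hölder inequalities in $L^4$. We then analyse the equality case. Equality forces, for near-optimal factorizations, the operator inequalities $\norm{\sum u_ku_k^*}_2^2\geq\sum\norm{u_ku_k^*}_2^2$ to be asymptotically sharp. For positive operators $x,y\in L^2(M)^+$, the identity $\norm{x+y}_2^2=\norm{x}_2^2+\norm{y}_2^2+2{\rm Tr}(xy)$ together with ${\rm Tr}(xy)\ge0$ shows that sharpness forces ${\rm Tr}(xy)=0$, hence $xy=0$. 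A compactness and weak limit argument on the Hilbert space $L^2$, applied to the near-optimal factorizations, should then give $|a^*||b^*|=0$ and $|a||b|=0$, that is, disjointness.

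I expect this equality-case analysis to be the main obstacle, since the infimum need not be attained. The first thing I would try is to reduce to the tracial case via Haagerup reduction, using the isometric isomorphism between Connes--Hilsum and Haagerup spaces as in Lemma~\ref{5J}, and then quote \cite{LMZ} directly.

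\emph{Part (2).} Let $S$ be a $2$-positive isometry. It is known (Junge, \cite{Ju}, see \cite{LMZ}) that positive maps are contractive on $L^2(M;\ell^1_2)$ whenever they are contractive on $L^2(M)$. More precisely, $2$-positivity lets one write the $\ell^1_2$-valued norm via a positive $2\times2$ block $\begin{pmatrix}\alpha & c\\ c^* & \beta\end{pmatrix}$ dominating $(a,b)$, and then one applies $S_2$. So
\[
\norm{(S a,S b)}_{L^2(M;\ell^1_2)}\le\norm{(a,b)}_{L^2(M;\ell^1_2)}.
\]
Now take $a,b$ disjoint. By part (1) and the fact that $S$ is an isometry,
\[
\norm{(Sa,Sb)}\le\norm{(a,b)}\le(\norm{a}_2^2+\norm{b}_2^2)^{1/2}=(\norm{Sa}_2^2+\norm{Sb}_2^2)^{1/2}.
\]
Applying part (1) again, $Sa$ and $Sb$ are disjoint, so $S$ is separating.
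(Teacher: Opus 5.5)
Your part (2) is the paper's argument. The paper cites \cite[Proposition 5.1]{LMZ}, which needs $2$-positivity rather than mere positivity, as your ``more precisely'' indicates. In part (1), your ``only if'' computation with $u_1=|a^*|^{1/2}$, $v_1=w_a|a|^{1/2}$ is correct, and so is your reverse inequality $\norm{(a,b)}_{L^2(M;\ell^1_2)}\geq(\norm{a}_2^2+\norm{b}_2^2)^{1/2}$.

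The gap is the ``if'' half of (1), which is exactly the direction part (2) needs to conclude that $S(a)$ and $S(b)$ are disjoint. Your weak-limit sketch fails as stated, for two reasons. First, ${\rm Tr}(X_1^{(m)}X_2^{(m)})\to 0$ does not pass to weak limits, since the pairing of two weakly convergent sequences need not converge. Second, the constraint $a=u_1v_1$ is lost once you keep only weak limits of $u_1u_1^*$ and $v_1^*v_1$, so nothing ties the limits back to $a$ and $b$. Your fallback does not work either. The Connes--Hilsum/Haagerup isomorphism used in Lemma \ref{5J} relates two non-tracial models. Haagerup's reduction only approximates by finite algebras, and a rigidity statement of the form ``equality implies disjointness'' does not survive approximation without a quantitative version. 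The paper itself simply asserts that the tracial proof of \cite[Lemma 4.1]{LMZ} runs verbatim in Connes--Hilsum spaces.

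The missing idea is to parametrize the norm by positive block matrices, which makes the infimum attained. One has $\norm{(a,b)}^2_{L^2(M;\ell^1_2)}=\inf\norm{\alpha_1+\alpha_2}_2\norm{\beta_1+\beta_2}_2$, taken over $\alpha_k,\beta_k\in L^2(M)^+$ such that $\begin{pmatrix}\alpha_1&a\\ a^*&\beta_1\end{pmatrix}$ and $\begin{pmatrix}\alpha_2&b\\ b^*&\beta_2\end{pmatrix}$ are positive in $L^2(M_2(M))$. This agrees with your formula:
\begin{itemize}
\item a factorization $a=u_1v_1$ yields the block $\begin{pmatrix}u_1\\ v_1^*\end{pmatrix}\begin{pmatrix}u_1^*&v_1\end{pmatrix}$;
\item conversely, block positivity gives $a=\alpha_1^{1/2}w\beta_1^{1/2}$ with $\norm{w}\leq1$, so $u_1=\alpha_1^{1/2}w$ and $v_1=\beta_1^{1/2}$ satisfy $u_1u_1^*\leq\alpha_1$.
\end{itemize}
These constraints are convex and norm closed, hence weakly closed. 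After rescaling $\alpha_k\mapsto t\alpha_k$, $\beta_k\mapsto t^{-1}\beta_k$, a minimizing sequence is bounded, since $0\leq\alpha_k\leq\alpha_1+\alpha_2$. Reflexivity and weak lower semicontinuity of $\norm{\cdot}_2$ then give a minimizer.

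Now suppose $\norm{(a,b)}^2=\norm{a}_2^2+\norm{b}_2^2$. Your chain of inequalities, applied to the minimizer, forces ${\rm Tr}(\alpha_1\alpha_2)=\norm{\alpha_1^{1/2}\alpha_2^{1/2}}_2^2=0$, and likewise for $\beta_1,\beta_2$. So the supports of $\alpha_1,\alpha_2$ are orthogonal, and likewise those of $\beta_1,\beta_2$. Block positivity also gives, for left and right supports, $l(a)\leq s(\alpha_1)$, $r(a)\leq s(\beta_1)$, $l(b)\leq s(\alpha_2)$ and $r(b)\leq s(\beta_2)$. Hence $a^*b=0$ and $ab^*=0$. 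This is the same block description you already invoke in part (2), so it costs nothing extra.
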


\begin{proof}
The assertion (1) is proved as \cite[Lemma 4.1]{LMZ} in the case when $L^2(M)$ is a tracial 
non-commutative $L^2$-space. The proof extends verbatim to the general case.

Now let $S\colon L^2(M)\to L^2(M)$ be a 2-positive isometry. Then by
\cite[Proposition 5.1]{LMZ}, we have
\begin{equation}\label{6Cont}
\norm{(S(a),S(b))}_{L^2(M; \ell^1_2)}\leq\bignorm{(a,b)}_{L^2(M; \ell^1_2)}.
\end{equation}
Again this result is proved for tracial spaces only but it extends to the general case.

Let $a,b\in L^2(M)$ be disjoint elements. Then combining part (1) and (\ref{6Cont}), we obtain
$$
\norm{(S(a),S(b))}_{L^2(M; \ell^1_2)}\leq\bigl(\norm{a}_2^2 +\norm{b}_2^2\bigr)^\frac12.
$$
Since $\norm{S(a)}_2=\norm{a}_2$ and $\norm{S(b)}_2=\norm{b}_2$, this shows, by part (1) again, that
$S(a)$ and $S(b)$ are disjoint.
\end{proof}

\begin{lemma}\label{6Pos}
Let $S\colon L^2(M)\to L^2(M)$ be a positive onto isometry. Then 
$S^{-1}$ is positive.
\end{lemma}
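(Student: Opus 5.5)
Since $L^2(M)$ is a Hilbert space for the inner product $(a\mid b)={\rm Tr}(b^*a)$, an onto isometry $S$ of $L^2(M)$ is a unitary operator. Hence $S^{-1}=S^*$, where $S^*$ denotes the Hilbert space adjoint. The plan is to reduce the statement to the positivity of $S^*$, and then to obtain that from the self-duality of the positive cone $L^2(M)^+$.

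\emph{Step 1 (self-duality).} I will use the fact that $L^2(M)^+$ is a self-dual cone:
$$
L^2(M)^+=\bigl\{b\in L^2(M)\,:\,(b\mid c)\geq 0\ \hbox{for all}\ c\in L^2(M)^+\bigr\}.
$$
This is the standard fact that $(M,L^2(M),J,L^2(M)^+)$, with $J$ the adjoint map $a\mapsto a^*$, is a standard form of $M$; one may quote it, e.g. from \cite{Hiai} or \cite{Terp0}.

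If a direct argument is preferred, it goes in two parts.
\begin{itemize}
\item The inclusion ``$\subset$'' of $L^2(M)^+$ in the dual cone follows from ${\rm Tr}(cb)={\rm Tr}\bigl(c^{1/2}bc^{1/2}\bigr)\geq 0$ for $b,c\geq 0$. Here one uses the tracial property ${\rm Tr}(xy)={\rm Tr}(yx)$ for $x\in L^p$, $y\in L^{p'}$, together with positivity of ${\rm Tr}$ on $L^1(M)^+$.
\item For the reverse inclusion, take $b$ in the dual cone. First, $(b\mid c)\in\Rdb$ for all $c\geq 0$. Since every element of $L^2(M)$ is a combination of positive elements, this forces $b=b^*$. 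Next, write $b=b_+-b_-$ with $b_+b_-=0$ and take $c=b_-$. This gives ${\rm Tr}(b_-b)=-{\rm Tr}(b_-^2)=-\norm{b_-}_2^2\geq 0$, hence $b_-=0$.
\end{itemize}

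\emph{Step 2 (positivity of $S^*$).} Let $b\in L^2(M)^+$ and $c\in L^2(M)^+$. Then
$$
(S^*b\mid c)=(b\mid Sc)={\rm Tr}\bigl((Sc)^*b\bigr)={\rm Tr}\bigl(S(c)\,b\bigr)\geq 0,
$$
because $S(c)\geq 0$ by positivity of $S$, and by the first part of Step 1. By Step 1 again, $S^*b\in L^2(M)^+$.

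\emph{Step 3.} Since $S^{-1}=S^*$, Step 2 shows that $S^{-1}$ is positive.

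The only non-formal point is Step 1, and specifically the reverse inclusion in the non-tracial Connes--Hilsum setting. There one must justify the decomposition $b=b_+-b_-$ inside $L^2(M)$ and the identity ${\rm Tr}(b_-b_+)=0$. I expect this to be the main obstacle, though a mild one. I would first try to quote the standard-form result, since the Connes--Hilsum $L^2(M)$ with its positive cone is well known to be the standard form. Failing that, I would use functional calculus on the self-adjoint operator $b\in A_{1/2}$, noting that $b_\pm$ are again $(-\frac12)$-homogeneous with $|b|=b_++b_-$, so that $b_\pm\in L^2(M)$ with $b_+b_-=0$.
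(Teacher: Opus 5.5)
Your proposal is correct and follows essentially the same route as the paper: use the self-duality of $L^2(M)^+$ to show that $S^*$ is positive, then conclude from $S^{-1}=S^*$. The only difference is that the paper quotes the self-duality criterion without proof, while you also sketch a valid direct argument for it.
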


\begin{proof}
An element $a\in L^2(M)$ is positive if and only if 
$(a\vert b)\geq 0$ for all $b\in L^2(M)^+$. Using this criterion,
we see that $S^*$ is positive if $S\colon L^2(M)\to L^2(M)$ is positive.
If further $S$ is an onto isometry, then 
$S^{-1}=S^*$, hence $S^{-1}$ is positive.
\end{proof}

Recall from Remark \ref{4cp} that for any
$S\colon L^2(M)\to L^2(M)$, we let 
$$
S_2 \colon L^2(M_2(M))\longrightarrow
L^2(M_2(M))
$$
be its natural tensor extension.
The following is an analogue of Theorem \ref{5Positive} for $p=2$. 
This is a converse to the fact that if $\phi$ is a continuous 
character on $G$, then $M_{\phi,2}$ is a completely isometric onto isometry
(see Corollary \ref{4Easy} and Remark \ref{4cp}).

\begin{theorem}\label{6p=2}
Let $\phi\in L^\infty(G)$. If the operator 
$S:=M_{\phi,2}\colon L^2({\mathcal L}G)\to L^2({\mathcal L}G)$
is such that  $S_2$ is a positive isometry, then $\phi$ is locally 
almost everywhere equal to a continuous character.
\end{theorem}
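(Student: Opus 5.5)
Since $S_2$ is positive, $S$ is in particular $2$-positive and positive. Since $S_2$ is an isometry, restricting to diagonal matrices shows that $S$ itself is an isometry. I would first show that $S$ is onto. By Corollary \ref{3Always} and the formula $\norm{\lambda(f)\Delta^{\frac12}}_2=\norm{f}_2$, the operator $S$ is unitarily equivalent to multiplication by $\phi$ on $L^2(G)$. Isometry therefore forces $\vert\phi\vert=1$ locally a.e., so $S$ is invertible with $S^{-1}=M_{\overline{\phi},2}$. Hence $S$ is a positive onto isometry. Lemma \ref{6Pos} then gives that $S^{-1}=S^*$ is positive, and the same argument applied to $S_2$ (also an onto isometry of $L^2(M_2({\mathcal L}G))$) shows that $S^{-1}$ is $2$-positive. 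Lemma \ref{5Continuous} applies to the positive multiplier $S$, so $\phi$ is continuous on compacts; as in Theorem \ref{5Positive}, we reduce to $G$ $\sigma$-compact and $\phi\in C_b(G)$.

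The key replacement for Sherman's theorem is Lemma \ref{6LMZ}(2). Both $S$ and $S^{-1}$ are $2$-positive isometries, so both are separating, and $S$ is a bijective disjointness-preserving positive map. I would then obtain a Jordan isomorphism $J\colon{\mathcal L}G\to{\mathcal L}G$ satisfying the analogue of (\ref{5Deriv}), namely
$$
S(ax+xa)=S(a)J(x)+J(x)S(a),\qquad a\in L^2({\mathcal L}G),\ x\in{\mathcal L}G.
$$
The plan for this step is as follows. For a projection $e\in{\mathcal L}G$, the elements $ea$ and $(1-e)a$ (suitably combined on both sides) are disjoint, so their images are disjoint. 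Using support projections of images of positive elements, one defines $J(e)$ and shows that $e\mapsto J(e)$ is an orthoadditive map of projection lattices. One then extends it to a Jordan $*$-isomorphism via Dye's theorem, or by mimicking the separating-map arguments of \cite[Section 3]{LMZ} in the non-tracial setting. Positivity of $S$ and $S^{-1}$ should give the symmetric form of the identity above, exactly as at the end of the proof of Lemma \ref{5J}. I expect this step to be the main obstacle: one has to carry the tracial structure theory of separating maps over to Connes--Hilsum $L^2$, and to handle the decomposition into a $*$-isomorphism part and an anti-$*$-isomorphism part.

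Once this identity is available, the rest of the proof of Theorem \ref{5Positive} goes through verbatim with $p=2$. We take $x=e_i\lambda(s)$ and $a=e_i\Delta^{\frac12}$, compress by $\chi_{E_n}$, and pass to SOT-limits using (\ref{5SOT}), (\ref{5ei}) and (\ref{5NormCV}). This yields $AX+XA=0$ with $A=\chi_{E_n}\Delta^{\frac12}$, and the spectral argument of \cite[Theorem VII.2.1]{BR} gives $\phi(e)J(\lambda(s))=\phi(s)\lambda(s)$. The end of the proof of \cite[Theorem 3.7]{AKLZ} then shows that $\overline{\phi(e)}\phi$ is a character. Since $S$ is positive, $\phi(e)=1$ (as in Theorem \ref{5Positive}), so $\phi$ is a character. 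The non-$\sigma$-compact case is handled as in \cite[Corollary 3.13]{AKLZ}.
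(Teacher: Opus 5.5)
Your proposal follows the paper's proof: $S$ and $S^{-1}$ are $2$-positive isometries (via Lemma~\ref{6Pos} applied to $S_2$), hence separating by Lemma~\ref{6LMZ}. The paper then only asserts that Sherman's proof of \cite[Theorem 1.2]{Sh} goes through for onto $L^2$-isometries that are separating together with their inverses, after which Lemma~\ref{5J} and the proof of Theorem~\ref{5Positive} run with $p=2$, exactly as you plan. Your ontoness argument (unitary equivalence of $S$ with multiplication by $\phi$ on $L^2(G)$, forcing $\vert\phi\vert=1$) is a correct and more direct substitute for Proposition~\ref{5Onto}; for the Jordan step, follow Sherman's argument rather than Dye's theorem alone, since ${\mathcal L}G$ may have type $\mathrm{I}_2$ summands, where Dye's theorem fails.
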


\begin{proof}
Assume that for $S=M_{\phi,2}$, the operator
$S_2$ is a  positive isometry. Then $S$ is a $2$-positive
isometry hence by Lemma \ref{6LMZ}, 
$S$ is separating. By Proposition \ref{5Onto}, $S$ is onto, which implies that $S_2$
is onto. Then, applying Lemma \ref{6Pos} with $S_2$, we obtain that
$S_2^{-1}$ is positive. Thus, $S^{-1}$ is  a 2-positive isometry
hence the above reasoning shows  that $S^{-1}$ is 
separating as well. Now a thorough look at  Sherman's proof 
reveals that \cite[Theorem 1.2]{Sh}
 holds for any onto isometry between non-commutative
$L^2$-spaces that is separating, as well as its inverse.
Then the proofs of Lemma \ref{5J} and Theorem \ref{5Positive} apply
to   $S=M_{\phi,2}$
if $S_2$ is a positive isometry.
\end{proof}

We do not know if Theorem \ref{5Positive} fully extends to $p=2$.

\bigskip\noindent
{\bf Acknowledgements:}
The last two authors thank the Centre International de Rencontres Mathématiques (CIRM) for fostering an exceptional research environment that greatly accelerated and facilitated this work through a RiR grant. The third author gratefully acknowledges the University  Marie and Louis Pasteur for providing an excellent research environment during her visit.
\vskip 1cm

\bibliographystyle{abbrv}

\end{document}